\tikzset{vtx/.style={inner sep=1.7pt, outer sep=0pt, circle, fill,draw}}
\title{Conflict-free Hypergraph Matchings and Coverings}
\author{Felix Joos}
\author{Dhruv Mubayi}
\author{Zak Smith}
\date{\today}
\thanks{The research leading to these results was partially supported by the Deutsche Forschungsgemeinschaft (DFG, German Research Foundation) -- 428212407 (F. Joos and Z. Smith).
}
\theoremstyle{definition}
\newtheorem{definition}{Definition}[section]
\theoremstyle{plain}
\newtheorem{theorem}[definition]{Theorem}
\newtheorem{lemma}[definition]{Lemma}
\newtheorem{observation}[definition]{Observation}
\numberwithin{equation}{section}
\newcommand{\cH}{\mathcal{H}}
\newcommand{\cM}{\mathcal{M}}
\newcommand{\new}[1]{#1} 
\def\COMMENT#1{}
\newglossaryentry{nhood2} {
    name=\ensuremath{N_{\mathcal{C}}^{(2)}(e)},
    description={set of edges forming a conflict of size two with $ e $}
}
\newglossaryentry{codegree} {
    name=\ensuremath{d_{\mathcal{G}}(u, v)},
    description={codegree of vertices $ u $ and $ v $ in hypergraph $ \mathcal{G} $}
}
\newglossaryentry{maxdegree} {
    name=\ensuremath{\Delta_V(\mathcal{G})},
    description={maximum degree of all vertices in $ V \subseteq V(\mathcal{G}) $}
}
\newglossaryentry{uniformsubgraph} {
    name=\ensuremath{\mathcal{G}^{(j)}},
    description={subhypergraph of $ \mathcal{G} $ consisting of edges of size $ j $}
}
\newglossaryentry{mixedconflictunifsub} {
    name=\ensuremath{\mathcal{D}^{(j_1, j_2)}},
    description={conflicts consisting of $ j_1, j_2 $ edges from $ \mathcal{H}_1, \mathcal{H}_2 $, respectively}
}
\newglossaryentry{maxdegmixed} {
    name=\ensuremath{\Delta_{j'_1, j'_2}(\mathcal{D})},
    description={maximum codegree among edge sets consisting of $ j_1, j_2 $ edges from $ \mathcal{H}_1, \mathcal{H}_2 $, respectively}
}
\newglossaryentry{conflictsbyvertex} {
    name=\ensuremath{\mathcal{D}_x},
    description={conflicts containing vertex $ x $ in their $ \mathcal{H}_2 $-part}
}
\newglossaryentry{conflictsbyvertexpair} {
    name=\ensuremath{\mathcal{D}_{x, y}},
    description={conflicts containing both $ x $ and $ y $ in their $ \mathcal{H}_2 $-part}
}
\newglossaryentry{conflictfree} {
    name=$ \mathcal{C} $-free,
    description={contains no conflict from $ \mathcal{C} $}
}
\newglossaryentry{testfunction} {
    name=$ j $-uniform \new{$ \ell $}-test function,
    description={$ [0, \ell] $-valued function on sets of $ j $ edges which is non-zero only for matchings}
}
\newglossaryentry{link} {
    name=\ensuremath{\mathcal{H}_{-v}},
    description={link hypergraph (all sets whose union with $ v $ form an edge in $ \mathcal{H} $)}
}
\newglossaryentry{conflictpair} {
    name={$ (\varepsilon, \mathcal{C}) $-conflict-sharing pair},
    description={two edges with large link hypergraph in $ \mathcal{C} $}
}
\newglossaryentry{cbounded} {
    name={$ (d, \ell, \varepsilon) $-bounded},
    description={conflict hypergraph satisfying (C\ref{cond_c1})--(C\ref{cond_c5})}
}
\newglossaryentry{simplybounded} {
    name={$ (d, \ell, \varepsilon) $-simply-bounded},
    description={conflict hypergraph satisfying (D\ref{cond_mixed_conflictsize})--(D\ref{cond_mixed_codegree2})}
}
\newglossaryentry{trackable} {
    name={$ (d, \varepsilon, \mathcal{C}) $-trackable},
    description={test function satisfying (W\ref{cond_w1})--(W\ref{cond_w4})}
}
\newglossaryentry{semitrackable} {
    name={$ (d, \varepsilon, \mathcal{C}) $-\new{semi-}trackable},
    description={test function satisfying (W$\ref{cond_w1*}^*$), (W$\ref{cond_w2*}^*$), (W\ref{cond_w3}), (W\ref{cond_w4})}
}
\newglossaryentry{edgesetpvertices} {
    name=\ensuremath{V_P(E)},
    description={all vertices of $ P $ covered by $ \mathcal{H}_2 $-part of $ E $}
}
\newglossaryentry{unavoidability} {
    name=\ensuremath{A(E)},
    description={unavoidability of edge set $ E $}
}
\newglossaryentry{conflictsextension} {
    name=\ensuremath{\new{\mathcal{G}}_{[C, D]}},
    description={conflicts in $ \mathcal{G} $ containing $ C, D $ in their $ \mathcal{H}_1, \mathcal{H}_2 $-parts, respectively}
}
\newglossaryentry{maxdegreeunavoidability} {
    name=\ensuremath{\Delta_{j_1', j_2'}^A(\new{\mathcal{G}})},
    description={maximum codegree weighted by unavoidability}
}
\newglossaryentry{mixedbounded} {
    name={$ (d, \ell, \varepsilon, \delta) $-mixed-bounded},
    description={conflict hypergraph satisfying (E\ref{cond_mixed_general_conflictsize})--(E\ref{cond_mixed_general_codegree_j21})}
}
\newglossaryentry{unavoidabilityvertex} {
    name=\ensuremath{A(v)},
    description={unavoidability of vertex $ v $}
}
\newglossaryentry{constants} {
    name=\ensuremath{\ell', \Gamma, i^*},
    description={constants used in proof}
}
\newglossaryentry{hiparts} {
    name=\ensuremath{E_i(\mathcal{G})},
    description={$ \mathcal{H}_i $-parts of conflicts in $ \mathcal{G} $}
}
\newglossaryentry{conflictscontc} {
    name=\ensuremath{\mathcal{G}_{[C]}},
    description={conflicts of $ \mathcal{G} $ containing edge set $ C $ in their $ \mathcal{H}_1$-part}
}
\newglossaryentry{conflictscontcx} {
    name=\ensuremath{\mathcal{G}_{[C], x}},
    description={conflicts containing edge set $ C $ and vertex $ x $ in their $ \mathcal{H}_1, \mathcal{H}_2$-parts, respectively}
}
\newglossaryentry{conflictscontcxy} {
    name=\ensuremath{\mathcal{G}_{[C], x, y}},
    description={conflicts containing edge set $ C $ and vertices $ x, y $ in their $ \mathcal{H}_1, \mathcal{H}_2$-parts, respectively}
}
\newglossaryentry{testfnwx} {
    name=\ensuremath{w_x},
    description={test function used to count all potential conflicts containing $ x $}
}
\newglossaryentry{testfnwx'} {
    name=\ensuremath{w_x'},
    description={test function used to count blocked potential conflicts containing $ x $}
}
\newglossaryentry{testable} {
    name=testable,
    description={edge set which is a conflict-free matching with no conflict-sharing pairs}
}
\newglossaryentry{blockingedges} {
    name=\ensuremath{b_{C, y}},
    description={edges containing $ y $ which could block conflicts containing $ C $}
}
\newglossaryentry{alphax} {
    name=\ensuremath{\alpha_x},
    description={scaling constant for test functions $ w_x, w_x' $}
}
\newglossaryentry{nhoodxh2} {
    name=\ensuremath{N_x},
    description={edges in $ \mathcal{H}_2 $ containing vertex $ x $}
}
\newglossaryentry{degreexh2} {
    name=\ensuremath{d_x},
    description={size of $ N_x $}
}
\newglossaryentry{intersectionsencoding} {
    name=\ensuremath{\mathcal{P}(\overline{C})},
    description={encoding of all intersections of a collection $ \overline{C} $ of edge sets}
}
\newglossaryentry{allencodings} {
    name=\ensuremath{\mathcal{P}^*_{m, s, s'}},
    description={all possible $ \mathcal{P}(\overline{C}) $ for $ m $ sets with sum of sizes $ s' $ and union size $ s $}
}
\newglossaryentry{testfnwxbar} {
    name=\ensuremath{\overline{w}_x^{m, s, s'}},
    description={function used to count $ m $-sets of potential conflicts with common $ \mathcal{H}_2$-part of size 1}
}
\newglossaryentry{testfnwxmss} {
    name=\ensuremath{w_x^{m, s, s'}},
    description={restriction of $ \overline{w}_x^{m, s, s'} $ to testable sets}
}
\newglossaryentry{betax} {
    name=\ensuremath{\beta_x},
    description={scaling constant for test functions $ \overline{w}_x^{m, s, s'}, w_x^{m, s, s'} $}
}
\newglossaryentry{edgesetextensions} {
    name=\ensuremath{\mathcal{Z}_E^{(j)}},
    description={all edge sets of size $ j $ containing edge set $ E $}
}
\newglossaryentry{testfnw1} {
    name=\ensuremath{w_1},
    description={test function used to count vertices in $ P $ left uncovered by $ \mathcal{M}_1 $}
}
\newglossaryentry{safeedges} {
    name=\ensuremath{N_x^s},
    description={edges in $ \mathcal{H}_2 $ containing $ x $ which do not form a conflict with $ j_2 = 1 $}
}
\begin{document}

\begin{abstract}
	Recent work showing the existence of conflict-free almost-perfect hypergraph matchings has found many applications.
	We show that, assuming certain simple degree and codegree conditions on the hypergraph $ \mathcal{H} $ and the conflicts to be avoided, a conflict-free almost-perfect matching can be extended to one covering all vertices in a particular subset of $ V(\mathcal{H})$, by using an additional set of edges; 
    in particular, we ensure that our matching avoids all additional conflicts, which may consist of both old and new edges.
    
	This setup is useful for various applications in design theory and Ramsey theory. For example, our main result  provides a crucial tool in the recent proof of the high girth existence conjecture due to  Delcourt and Postle.  It also provides a black box which encapsulates many long and tedious calculations, greatly simplifying the proofs of results in generalised Ramsey theory. 
\end{abstract}

\maketitle

\section{Introduction} \label{section_introduction}
	Hypergraph matching problems can be used to model various central questions in combinatorics, and consequently have been studied for many years.
	Notably, Frankl and Rödl~\cite{frankl1985nearperfect}, as well as Pippenger~\cite{pippenger1989asymptotic}, proved that any $k$-uniform hypergraph ($k$ fixed) on $n$ vertices, in which each vertex belongs to roughly $d$ edges and any pair of \new{vertices} belongs to at most $o_d(d)$ edges, contains a matching covering $(1-o_d(1))n$ vertices.
	These theorems have undoubtedly had a vast number of applications.
	
	More recently, Delcourt and Postle~\cite{delcourt2022finding}, as well as Glock, Joos, Kim, Kühn, and Lichev~\cite{glock2022conflictfree}, generalised this result by introducing so-called conflict-free matchings.
	Here a conflict is a set of disjoint edges which is forbidden to be a subset of the matching.
	The main contribution in~\cite{delcourt2022finding,glock2022conflictfree} is as follows:
	under the same conditions as Frankl, Rödl, and Pippenger,
	and under sensible conditions on the set of conflicts,
	one can find an almost perfect matching that avoids all conflicts.
	This can again be applied to a number of problems, such as high girth decompositions, problems considered by Brown, Erd\H{o}s and S\'os~\cite{brown1973extremal} and various questions in generalised Ramsey theory~\cite{bal2024generalized, bennett2023edgecoloring, gomezleos2023new, joos2022ramsey, lane2024generalized}.
	
	The unfortunate drawback of these theorems is that they deal only with almost-perfect matchings,
	whereas in many applications it is desirable to obtain perfect matchings or at least almost perfect matchings that cover  a specified vertex subset entirely.
	Delcourt and Postle describe a setup in which this obstacle can be overcome in a particular setting.
	Specifically, they consider hypergraphs which are bipartite in the sense that the vertex set can be partitioned into two parts $A$ and $B$ with each edge containing exactly one vertex from $A$ (and they assume that the vertex degrees in $A$ are slightly higher than in $B$),
	in which case they find a conflict-free matching covering all vertices of $A$ (the matching is \textit{$A$-perfect}).
	The primary limitation of the bipartite hypergraphs in \cite{delcourt2022finding} is the fact that edges are only allowed to contain exactly one vertex from $A$, whereas in several applications this is not the case.

	Our main contribution is to obtain a stronger theorem in a  more general setup:
	we work with a `tripartite' hypergraph for which the vertex set has a partition into three sets $P,Q,R$, and we seek a $P$-perfect matching.
	The edge set is divided into two parts: edges containing $p$ vertices from $P$ and $q$ vertices from $Q$, and edges containing one vertex from $P$ and $r$ vertices from $R$.
	In this setup we prove the existence of a $ P $-perfect matching, avoiding conflicts which may consist of both types of edges (almost all vertices in $P$ are covered by edges of the first type and a few vertices in $P$ are covered by edges of the second type).
	\new{The proof} generalises a `two-stage method' for constructions in generalised Ramsey theory, first introduced by Bennett, Cushman, Dudek, and Prałat \cite{bennett2022erdhosgyarfas}, and subsequently used to prove various other results in this area; our theorem simplifies all such proofs significantly so that all technical computations are no longer needed, and mostly back-of-the-envelope calculations are sufficient.
 
	This framework also turns out to be useful in other settings, including high-girth coverings and designs; these applications are discussed further in Section~\ref{section_applications_matching}. We expect that there will be further applications in future.
	
\subsection{Tripartite Matching Theorem}
	\label{section_matching}
	
	\begin{figure}
		\begin{tikzpicture}
			\def \px {0};
			\def \qx {-3};
			\def \rx {3};			
			
			\node at (\px, 0) [draw,rectangle,minimum width=1cm,minimum height=4cm,rounded corners,thick,label=$P$] {};
			\node at (\qx, 0) [draw,rectangle,minimum width=1cm,minimum height=4cm,rounded corners,thick,label=$Q$] {};
			\node at (\rx, 0) [draw,rectangle,minimum width=1cm,minimum height=4cm,rounded corners,thick,label=$R$] {};
			
			\def \ylist {1.7, 1.4, 1.1, 0.8, 0.5, 0.2, -0.1, -0.4, -0.7, -1, -1.3, -1.6};
			\def \halfylistl {1.4, 0.8, 0.2, -0.4, -1, -1.6};
			\def \halfylists {1.4, 0.8, 0.2, -0.4, -1};
						
			\foreach \y in \halfylistl \node at (0, \y) [vtx,inner sep=1pt] {};
			
			\foreach \x in {\qx-0.2, \qx+0.2} \foreach \y in \ylist \node at (\x, \y) [vtx,inner sep=1pt] {};

			\foreach \x in {\rx-0.2,\rx+0.2} \foreach \y in \ylist \node at (\x, \y) [vtx,inner sep=1pt] {};			
			
			\def \edgesep {0.15};
			\def \pheight {0.6};
			\foreach \py in \halfylists {\foreach \qy in {\py + 0.3, \py, \py - 0.3} {
				\pgfmathrandominteger{\r}{3}{12}
				\draw[color=red, opacity=0.3, line cap=round, line join=round, line width=2pt] (\px+\edgesep, \py+\edgesep) -- (\px-\edgesep, \py+\edgesep) -- (\qx+0.1+\edgesep, \qy+\edgesep) -- (\qx-0.2-\edgesep, \qy+\edgesep) -- (\qx-0.2-\edgesep, \qy-0.3-\edgesep) -- (\qx+0.1+\edgesep, \qy-0.3-\edgesep) -- (\px-\edgesep, \py-\pheight-\edgesep) -- (\px+\edgesep, \py-\pheight-\edgesep) -- (\px+\edgesep, \py+\edgesep);
			}};

			\def \edgesep {0.15};
			\def \pheight {0};
			\foreach \py in \halfylistl {\foreach \qy in {\py + 0.3, \py} {
				\pgfmathrandominteger{\r}{3}{12}
				\draw[color=blue, opacity=0.3, line cap=round, line join=round, line width=2pt] (\px-\edgesep, \py+\edgesep) -- (\px+\edgesep, \py+\edgesep) -- (\rx-0.1-\edgesep, \qy+\edgesep) -- (\rx+0.2+\edgesep, \qy+\edgesep) -- (\rx+0.2+\edgesep, \qy-\edgesep) -- (\rx-0.1-\edgesep, \qy-\edgesep) -- (\px+\edgesep, \py-\pheight-\edgesep) -- (\px-\edgesep, \py-\pheight-\edgesep) -- (\px-\edgesep, \py+\edgesep);
			}};
			
			\def \edgesep {0.17};
			\def \py {1.4};
			\def \qy {1.7};
			\def \pheight {0.6};
			\draw[color=red, line cap=round, line join=round, line width=3pt] (\px+\edgesep, \py+\edgesep) -- (\px-\edgesep, \py+\edgesep) -- (\qx+0.1+\edgesep, \qy+\edgesep) -- (\qx-0.2-\edgesep, \qy+\edgesep) -- (\qx-0.2-\edgesep, \qy-0.3-\edgesep) -- (\qx+0.1+\edgesep, \qy-0.3-\edgesep) -- (\px-\edgesep, \py-\pheight-\edgesep) -- (\px+\edgesep, \py-\pheight-\edgesep) -- (\px+\edgesep, \py+\edgesep);
			\def \py {-0.4}
			\def \qy {-1.3};
			\draw[color=red, line cap=round, line join=round, line width=3pt] (\px+\edgesep, \py+\edgesep) -- (\px-\edgesep, \py+\edgesep) -- (\qx+0.1+\edgesep, \qy+\edgesep) -- (\qx-0.2-\edgesep, \qy+\edgesep) -- (\qx-0.2-\edgesep, \qy-0.3-\edgesep) -- (\qx+0.1+\edgesep, \qy-0.3-\edgesep) -- (\px-\edgesep, \py-\pheight-\edgesep) -- (\px+\edgesep, \py-\pheight-\edgesep) -- (\px+\edgesep, \py+\edgesep);
			
			\def \edgesep {0.17};
			\def \py {0.2};
			\def \qy {0.2};
			\def \pheight {0};
			\draw[color=blue, line cap=round, line join=round, line width=3pt] (\px-\edgesep, \py+\edgesep) -- (\px+\edgesep, \py+\edgesep) -- (\rx-0.1-\edgesep, \qy+\edgesep) -- (\rx+0.2+\edgesep, \qy+\edgesep) -- (\rx+0.2+\edgesep, \qy-\edgesep) -- (\rx-0.1-\edgesep, \qy-\edgesep) -- (\px+\edgesep, \py-\pheight-\edgesep) -- (\px-\edgesep, \py-\pheight-\edgesep) -- (\px-\edgesep, \py+\edgesep);
			\def \py {-1.6};
			\def \qy {-1.3};
			\draw[color=blue, line cap=round, line join=round, line width=3pt] (\px-\edgesep, \py+\edgesep) -- (\px+\edgesep, \py+\edgesep) -- (\rx-0.1-\edgesep, \qy+\edgesep) -- (\rx+0.2+\edgesep, \qy+\edgesep) -- (\rx+0.2+\edgesep, \qy-\edgesep) -- (\rx-0.1-\edgesep, \qy-\edgesep) -- (\px+\edgesep, \py-\pheight-\edgesep) -- (\px-\edgesep, \py-\pheight-\edgesep) -- (\px-\edgesep, \py+\edgesep);

			\node at (\qx - 1, 1.6) [red] {$\mathcal{M}_1$};
			\node at (\rx + 1, -1.3) [blue] {$\mathcal{M}_2$};
			
			\node at (0.5*\qx + 0.5*\px, -2.3) [red] {\small $\delta(\mathcal{H}_1) \approx \Delta(\mathcal{H}_1) \approx d$};
			\node at (\rx-1, -2.3) [blue] {\small $\Delta_R(\mathcal{H}_2) \le d^{\varepsilon^4} \delta_{P}(\mathcal{H}_2)$};
		\end{tikzpicture}
		\caption{Two matchings $ \mathcal{M}_1 \subseteq \mathcal{H}_1 $ and $ \mathcal{M}_2 \subseteq \mathcal{H}_2 $ in the hypergraph $ \mathcal{H} $, whose union forms a $ P $-perfect matching $ \mathcal{M} $; here $ p = 2, q = 4, r = 2 $.}
	\end{figure}
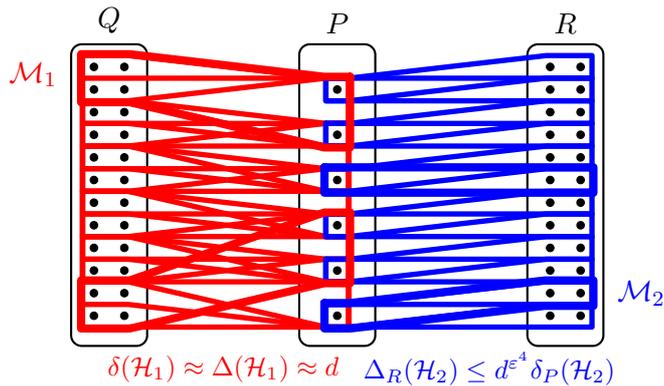


    Given two hypergraphs $ \mathcal{H} $ and $ \mathcal{C} $, say that $ \mathcal{C} $ is a \textit{conflict hypergraph for $ \mathcal{H} $} if $ V(\mathcal{C}) = E(\mathcal{H}) $, and in this case call the edges of $ \mathcal{C} $ \textit{conflicts}.
    \new{Say that a set of edges $ E \subseteq \mathcal{H} $ is \gls{conflictfree} if it does not contain any conflict from $ \mathcal{C} $.}
    Suppose that we are given the following setup:

    \begin{enumerate}[(S1)]
        \item \label{cond:setup_params} integers $ \ell \ge 2, d > 0, p \ge 1, q \ge 0, r \ge 1 $ with $ p + q \ge 2 $ and real $ \varepsilon > 0 $;
        \item \label{cond:setup_Psize} disjoint sets $ P, Q, R $ with $ d^{\varepsilon} \le |P| \le |P \cup Q| \le \exp(d^{\varepsilon^3}) $;
        \item hypergraph $ \mathcal{H}_1 $ whose edges consist of $ p $ vertices from $ P $ and $ q $ vertices from $ Q $;
        \item hypergraph $ \mathcal{H}_2 $ whose edges consist of a single vertex from $ P $ and $ r $ vertices from~$ R $;
        \item conflict hypergraph $ \mathcal{C} $ for $ \mathcal{H}_1 $;
        \item \label{cond:setup_mixed_conflicts} conflict hypergraph $ \mathcal{D} $ for $ \mathcal{H} \coloneqq \mathcal{H}_1 \cup \mathcal{H}_2 $.
    \end{enumerate}

    Assume that $ \mathcal{H} $ satisfies suitable degree conditions, and further that both $ \mathcal{C} $ and $ \mathcal{D} $ satisfy suitable boundedness conditions, all of which are specified in Section~\ref{section_conditions} in terms of $ d $ and $ \varepsilon $.
 
	\begin{theorem}
		\label{theorem_matching_simple}
		
		\new{Given $ p, q, r, \ell $ as above,} there exists $ \varepsilon_0 > 0 $ such that for all $ \varepsilon \in (0, \varepsilon_0) $, there exists $ d_0 $ such that given the above setup, the following holds for all $d \ge d_0$:  there exists a  $ P $-perfect \new{$ \mathcal{C} \cup \mathcal{D} $-free} matching $ \mathcal{M} \subseteq \mathcal{H} $. Furthermore, at most $ d^{-\varepsilon^4} |P| $ vertices of $ P $ belong to an edge in $ \mathcal{H}_2 \cap \mathcal{M} $.
	\end{theorem}
	
	The proof of Theorem~\ref{theorem_matching_simple}, given in Section~\ref{section_matching_proof}, consists of two stages, as mentioned: we first apply Theorem~\ref{theorem_cfhm_mod}, a variant of the original conflict-free hypergraph matching theorem \cite{glock2022conflictfree}, to the hypergraph $ \mathcal{H}_1 $ in order to obtain a \new{$ \mathcal{C} $}-free matching \new{$ \mathcal{M}_1 $} covering \new{all but at most a $ d^{-\varepsilon^3} $ fraction} of the vertices of $ P $ with edges from $ \mathcal{H}_1 $.
    \new{In the proof of Theorem~\ref{theorem_cfhm_mod} (which we apply as a black box), this matching is chosen randomly one edge at a time, and accordingly can be thought of as `pseudorandom' in an appropriate sense; specifically, we are able to show that certain weight functions on edge sets have roughly their `expected' value when summed over $ \mathcal{M}_1 $.}
	To extend \new{$\mathcal{M}_1$} to a $ P $-perfect matching, for each vertex $ x \in P $ which is not already covered, we randomly choose some edge from $ \mathcal{H}_2 $ to cover $ x $, and use the Lovász Local Lemma to show that with non-zero probability the resulting set \new{$ \mathcal{M}_2 $} of edges is indeed a matching, and the union $ \mathcal{M} $ of our two matchings is \new{$ \mathcal{D} $}-free.
    \new{In order to apply the local lemma, we use the aforementioned weight functions to bound the number of potential conflicts which could be introduced when choosing an edge from $ \mathcal{H}_2 $ to cover each $ x \in P $.}
	
\subsection{Notation}
	
	Write $ [i, n] = \{ i, \ldots, n \} $, so $ [n] = [1, n] $.
	Unless otherwise stated we identify hypergraphs with their edge sets, writing $ e \in \mathcal{G} $ to mean $ e \in E(\mathcal{G}) $.
    Given a set of vertices $ U \subseteq V(\mathcal{G}) $ in a hypergraph $ \mathcal{G} $, write $ d_{\mathcal{G}}(U) $ for the degree of $ U $ in $ \mathcal{G} $, that is the number of edges of $ \mathcal{G} $ containing $ U $; in the cases $ U = \{ u \} $ and $ U = \{ u, v \} $, where $ U $ consists of one or two vertices, we just write $ d_{\mathcal{G}}(u) $ and \gls{codegree} respectively.
	We omit the subscript if $ \mathcal{G} $ is obvious from context.
	Write $ \Delta_j(\mathcal{G}) $ for the maximum degree $ d_{\mathcal{G}}(U) $ among sets $ U \subseteq V(\mathcal{G}) $ of $ j $ vertices.
    Given a subset of the vertices $ V \subseteq V(\mathcal{G}) $, write \gls{maxdegree} for the maximum degree $ d_{\mathcal{G}}(u) $ of any single vertex $ u \in V $, and similarly $ \delta_V(\mathcal{G}) $ for the minimum degree; assume $ V = V(\mathcal{G}) $ if not specified.
	Given $ j \in \mathbb{N} $, write $ \gls{uniformsubgraph} \coloneqq \{ E \in \mathcal{G} : |E| = j \} $ for the subhypergraph of $ \mathcal{G} $ containing only those edges of size $ j $.

    We omit ceiling and floor symbols whenever they do not affect the argument.
    \new{We refer readers to the Glossary section at the end of the paper for a summary of more specific notation which appears throughout the paper.}

\subsection{This paper}

    \new{We start by outlining two significant applications of Theorem~\ref{theorem_matching_simple} in Section~\ref{section_applications_matching}.
    We then complete the formal statement in Section~\ref{section_conditions} by listing the various conditions required on our hypergraph and conflicts.
    In Section~\ref{section_proof_background}, we state our prerequisites, in particular Theorem~\ref{theorem_cfhm_mod}, a variant of the main theorem from~\cite{glock2022conflictfree}, as well as a more complicated but weaker set of conditions, for which we will in fact prove our main theorem.
    Finally, the proof itself is given in Section~\ref{section_matching_proof}, which begins with a more detailed outline.}
	
\section{Applications}
	\label{section_applications_matching}
	
	We briefly discuss here two applications of Theorem~\ref{theorem_matching_simple}.

\subsection{Conflict-free coverings and large girth designs}
	
	For a hypergraph $\cH$, we call a set of edges $\cM\subseteq \cH$ a \emph{covering} of $\cH$ if all vertices belong to some edge in $\cM$, and say it is \emph{perfect} if each vertex belongs to exactly one edge.
	In the setting of Frankl, Rödl and Pippenger, 
	the existence of almost perfect matchings and almost perfect coverings (coverings in which most vertices belong to exactly one edge) is equivalent
	and one object can easily be transformed into the other one.
	However, it is not obvious that just greedily adding edges to turn an almost perfect matching into an almost perfect covering can be done without introducing conflicts; Theorem~\ref{theorem_covering}, which follows easily from Theorem~\ref{theorem_matching_simple}, resolves this problem.
	It represents the natural analogue of Theorem~\ref{theorem_cfhm_simple}, providing a covering in place of a matching.
	
	\begin{theorem}
		\label{theorem_covering}
		Fix $ \ell, k \ge 2 $. There exists $ \varepsilon_0 $ such that for all $ \varepsilon \in (0, \varepsilon_0) $, there exists $ d_0 $ such that for all $ d \ge d_0 $ the following holds. Suppose $ \mathcal{H} $ is a $ k $-graph on \new{$ n \le \exp(d^{\varepsilon^3}) $} vertices. Assume that $ (1 - d^{-\varepsilon})d \le \delta(\mathcal{H}) \le \Delta(\mathcal{H}) \le d $ and $ \Delta_2(\mathcal{H}) \le d^{1 - \varepsilon} $. Let $ \mathcal{C} $ be a $ (d, \ell, \varepsilon) $-bounded conflict hypergraph for $ \mathcal{H} $. Then there is a $ \mathcal{C} $-free covering $ \mathcal{M} \subseteq \mathcal{H} $ such that all but $ d^{-\varepsilon^5}n $ vertices are covered exactly once, and no vertex is covered more than twice.
	\end{theorem}

	One of the original motivations for studying hypergraph matchings, and in particular conflict-free matchings, was the problem of finding almost-perfect Steiner systems of large girth.
	In general, \textit{a partial $ (m, s, t) $-Steiner system} is a collection $ \mathcal{S} $ of subsets of $ [m] $, each of size $ s $, such that every subset of $ [m] $ of size $ t $ is contained in at most one element of $ \mathcal{S} $; it is \textit{approximate} if it has size $ (1 - o(1)) {m \choose t} / {s \choose t} $.
	The \textit{girth} of $ \mathcal{S} $ is the smallest integer $ g \ge 2 $ such that some set of $ (s - t) g + t $ vertices induces at least $ g $ sets in $ \mathcal{S} $. 

    Recently, Delcourt and Postle~\cite{delcourt2024proof} proved the existence of perfect $ (m, s, t) $-Steiner systems of large girth, via a new \textit{refined absorption} method.
    Their result provides a common generalization of the existence conjecture for designs originating from the 1800s and Erd\H os’ conjecture from 1973 on the existence of high girth Steiner triple systems.
    Our main result, Theorem~\ref{theorem_matching_simple}, is a  crucial tool in their proof (Theorem 2.10 in~\cite{delcourt2024proof}).
    As noted earlier, our proof method uses the main result of~\cite{glock2022conflictfree}  which employs the random greedy process.
    On the other hand, Delcourt and Postle's proof (to appear in a forthcoming version of~\cite{delcourt2022finding}) of their Theorem 2.10 from~\cite{delcourt2024proof} uses the nibble method.
    A simpler version of Theorem~\ref{theorem_matching_simple}, without conflicts, is also used by Delcourt and Postle \cite{delcourt2024refinedabsorptionnewproof}, as well as by Delcourt, Postle, and Kelly \cite{delcourt2024cliquedecompositionsrandomgraphs, delcourt2024thresholdsnq2steinersystemsrefined} in other applications of their refined absorption method.
 
	Conflict-free hypergraph matchings can also be used to find approximate systems for a much more general class of quasirandom hypergraphs, where, for example, we restrict the choice of elements of $ \mathcal{S} $ to a randomly chosen subset of $ {[m] \choose s} $; see Theorem~1.4 of \cite{glock2022conflictfree}.
	The following analogous covering result can be easily deduced from our Theorem~\ref{theorem_covering}.
	Given real numbers $ a, b, c $, we write $ a = b \pm c $ to mean $ b - c \le a \le b + c $.
	
	\begin{theorem}
		\label{theorem_covering_application}
		For all $ c_0 > 0, \ell \ge 2 $ and $ s > t \ge 2 $, there exists $ \varepsilon_0 > 0 $ such that for all $ \varepsilon \in (0, \varepsilon_0) $, there exists $ m_0 $ such that the following holds for all $ m \ge m_0 $ and $ c \ge c_0 $.
		Let $ G $ be a $ t $-graph on $ m $ vertices and let $ \mathcal{K} $ be a collection of sets of size $ s $ which induce cliques in $ G $ such that any edge is contained in $ (1 \pm m^{-\varepsilon})c m^{s - t} $ elements of $ \mathcal{K} $.
		
		Then, there exists a subset $ \mathcal{S} \subseteq \mathcal{K} $ such that every edge of $ G $ is contained in at least one element of $ \mathcal{S} $, the proportion of edges of $ G $ contained in more than one element of $ \mathcal{S} $ is $ o_m(1) $, none are contained in more than two, and any subset of $ \mathcal{S} $ of size $ j $, where $ j \in [2, \ell] $, whose elements have pairwise intersections of size at most $ t - 1 $, spans more than $ (s - t)j + t $ points.
	\end{theorem}

	\subsection{Generalised Ramsey numbers}
	
	Given graphs $ G $ and $ H $, and $ q \in \mathbb{N} $, define the \textit{generalised Ramsey number $ r(G, H, q) $} to be the minimum number of colours needed to colour the edges of $ G $ in such a way that every copy of $ H $ receives at least $ q $ distinct colours.
	Bennett, Cushman, Dudek, and Prałat \cite{bennett2022erdhosgyarfas} showed that
	$ r(K_n, K_4, 5) = 5n/6 + o(n), $
	answering a question of Erd\H{o}s and Gy\'arf\'as, by introducing the aforementioned two-stage method in which they first colour most of the edges of $ K_n $ using a modified triangle removal process (requiring complicated technical analysis), and then complete this to a full colouring using the Lovász Local Lemma.
	Joos and Mubayi \cite{joos2022ramsey} simplified this method greatly by encoding the first stage as a suitable conflict-free hypergraph matching problem and applying the main theorem from \cite{glock2022conflictfree} as a black box, and demonstrated its versatility further by showing that $ r(K_n, C_4, 3) = n / 2 + o(n) $.
	This approach has subsequently been used to prove various similar and more general results \cite{bal2024generalized, bennett2023edgecoloring, gomezleos2023new, joos2022ramsey, lane2024generalized}.
	
	Our main theorem (Theorem~\ref{theorem_matching_simple}) formalises this two-stage method in a single statement, from which all of these colouring results follow; another main contribution of this paper is to consolidate all of the calculations required for the second stage, so that applications need only focus on constructing appropriate hypergraphs and conflicts satisfying our conditions.
	This massively simplifies the proofs of all existing results, since it now suffices to only check the orders of magnitude of the numbers of different types of conflicts.
    We illustrate this by providing in Appendix~\ref{section:appendix} a concise proof of the following result  which was stated (without proof) very recently by Bal, Bennett, Heath, and Zerbib \cite{bal2024generalized}.
    Given $ k \ge 2 $, write $ K_n^k $ for the complete $ k $-graph on $ n $ vertices, and $ C_{\ell}^k $ for the $ k $-uniform tight cycle of length $ \ell $; that is, edges $ e_1, \ldots, e_{\ell} $ on vertices $ v_1, \ldots, v_{\ell} $ such that $ e_i = \{ v_i, \ldots, v_{i + k - 1} \} $ (modulo $ \ell $) for each $ i \in [\ell] $.
    \begin{theorem} \label{theorem_cycles}
        For all $ k \ge 2 $ and $ \ell \ge k + 2 $, we have $ r(K_n^k, C_{\ell}^k, k + 1) \le n / (\ell - k) + o(n) $.
    \end{theorem}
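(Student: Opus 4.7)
The plan is to reduce Theorem~\ref{theorem_cycles} to Theorem~\ref{theorem_matching} by encoding a near-optimal edge-colouring of $K_n^k$ as a conflict-free $P$-perfect matching in a suitable auxiliary hypergraph. Fix a small $\gamma > 0$ and set $m := \lceil n/(\ell - k) + n^{1-\gamma}\rceil$, the target number of colours. Let $P := E(K_n^k)$, so $|P| = \binom{n}{k}$. Introduce \emph{long colour slots} $Q := [m] \times \binom{[n]}{\ell - 1}$ and \emph{singleton colour slots} $R$ of comparable polynomial size, each slot carrying a colour in $[m]$. Define $\mathcal{H}_1$ to have hyperedges $(T, (c, V))$, with $p := \ell - k$ vertices in $P$ (the edges of a tight path $T$ of length $\ell - k$ in $K_n^k$) and $q := 1$ vertex $(c, V) \in Q$ with $V = V(T)$; and $\mathcal{H}_2$ bipartite between $P$ and $R$ with hyperedges $(e, r)$. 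A $P$-perfect matching $\mathcal{M} \subseteq \mathcal{H}$ then induces an edge-colouring of $K_n^k$ with at most $m$ colours: long hyperedges assign their colour class, and $\mathcal{H}_2$-hyperedges assign singletons.

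For each tight $\ell$-cycle $C \subseteq K_n^k$ and each set of hyperedges whose $P$-parts together cover $E(C)$ and whose colour labels take at most $k$ distinct values, declare the set a conflict (placing it in $\mathcal{C}$ if all hyperedges lie in $\mathcal{H}_1$ and in $\mathcal{D}$ otherwise). A $P$-perfect matching avoiding all such conflicts then induces a colouring of $K_n^k$ in which every tight $\ell$-cycle has at least $k + 1$ distinct colours, proving the theorem.

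\textbf{Verification of the hypotheses.} Set $d := \Theta(n^{\ell - k})$. Condition~(H1) holds because each $e \in P$ lies in $\Theta(n^{\ell - 1 - k})$ tight paths of length $\ell - k$, each paired with $m = \Theta(n)$ colour choices, giving degree $\Theta(d)$, while each $Q$-vertex $(c, V)$ has degree $O(1)$ (the constant number of tight paths with vertex set $V$). Condition~(H2) follows from the $O(n^{\ell - k - 2})$ count of tight paths of length $\ell - k$ containing two fixed edges; (H3) and (H4) hold for $\mathcal{H}_2$ by its uniform bipartite construction. The boundedness conditions (C1)--(C3) and (D1)--(D4) reduce to bounding the number of tight $\ell$-cycles extending a fixed partial configuration of $j'$ hyperedges by $O(n^{\ell - \sigma})$, where $\sigma$ is the number of vertices determined by those hyperedges, multiplied by the (bounded) number of slot-colour completions for the remaining cycle edges with at most $k$ distinct colours; these match the required orders of magnitude for $\varepsilon$ sufficiently small in terms of $k$ and $\ell$.

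\textbf{Main obstacle.} The principal technical work consists of (a)~designing the slot indexing on $Q$ by the $(\ell-1)$-subset $V = V(T)$, so that $Q$-degrees are $O(1)$ and do not dominate the $P$-degree, which is the trick from the two-stage method of \cite{bennett2022erdhosgyarfas, joos2022ramsey} needed for (H1); and (b)~a careful enumeration of conflict types $(j_1, j_2)$ to verify the $\mathcal{D}$ bounds. Since our conflicts naturally include those with $j_2 = 1$, and in the boundary case $\ell = 2k$ also conflicts of size~$2$ (two long classes exactly covering a tight $\ell$-cycle), we appeal to the slightly more general boundedness conditions referenced in Section~\ref{section_general_conditions}. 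With these adjustments, all remaining calculations are the kind of order-of-magnitude bookkeeping that Theorem~\ref{theorem_matching} is designed to absorb.
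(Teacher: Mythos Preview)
Your construction has a genuine gap: by taking $\mathcal{H}_1$-blocks to be tight paths of length $\ell-k$ with a single $Q$-vertex $(c,V)$ indexed by the full $(\ell-1)$-set, you lose exactly the structural control that makes the conflict hypotheses verifiable. In the paper the blocks are \emph{cliques} $K_{\ell-1}^k$, and the $Q$-vertices are the $(k-1)$-subsets of the clique (one per colour). The point of this design is not degree balancing --- in fact the paper's $Q$-degrees are also $\sim d$, not $O(1)$ --- but that two disjoint auxiliary edges are forced to satisfy $|V(X_1)\cap V(X_2)|\le k-1$. Since any edge-disjoint covering of a tight $\ell$-cycle by two consecutive arcs makes the two blocks overlap in at least $2(k-1)$ cycle vertices (at the two seams), no size-$2$ conflict can arise in $\mathcal{C}$. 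With your tight-path blocks this fails: for every $\ell\ge 2k$ (hence for all $\ell$ when $k=2$) one can cover a tight $\ell$-cycle by two edge-disjoint tight paths of length $\ell-k$, producing size-$2$ conflicts in $\mathcal{C}$ that violate (C\ref{cond_c1}). Your appeal to Section~\ref{section_general_conditions} does not help here, as those relaxed conditions concern $\mathcal{D}$, not $\mathcal{C}$.

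There is a second, related problem. The paper's counting (culminating in Observations~\ref{obs_cycles_conflict} and~\ref{obs_cycles_conflict2}) rests on a reduction showing that one may assume each block colours a \emph{consecutive} arc of the cycle; this reduction works because a clique contains every tight path on its vertex set, so non-consecutive arcs can be rerouted inside the clique to produce a shorter witness. Tight paths have no such rerouting property, so your assertion that the degree and codegree bounds on $\mathcal{C}$ and $\mathcal{D}$ ``reduce to bounding the number of tight $\ell$-cycles extending a fixed partial configuration'' is not justified: without the consecutive-arc structure, the bookkeeping of how many cycle vertices a collection of $j'$ blocks determines does not give the required $d^{j-j'-\varepsilon}$ savings in (C\ref{cond_c3}).
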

We remark that the factor $\ell-k$ above is best possible assuming a well-known conjecture about the Tur\'an number of tight paths in hypergraphs.

\section{Conditions for the main theorem} \label{section_conditions}

    \new{We now specify the conditions on $ \mathcal{H}, \mathcal{C}$, and $ \mathcal{D} $ required for Theorem~\ref{theorem_matching_simple} to hold, thus completing its formal statement.
    Throughout this section, we work in the setup from Section~\ref{section_matching}, recalling (S\ref{cond:setup_params})--(S\ref{cond:setup_mixed_conflicts}).}

    \subsection{Degree conditions on $ \mathcal{H} $}
	
	We require that the hypergraph $ \mathcal{H} = \cH_1 \cup \cH_2 $ with $\cH_1,\cH_2\neq \emptyset$ satisfies the following conditions.

	\begin{enumerate}[(H1)]
		\item $ (1 - d^{-\varepsilon})d \le \delta_P(\mathcal{H}_1) \le \Delta(\mathcal{H}_1) \le d $; \label{cond_h_h1degree}
		\item $ \Delta_2(\mathcal{H}_1) \le d^{1 - \varepsilon} $; \label{cond_h_h1codegree}
		\item $\Delta_{R}(\mathcal{H}_2) \le d^{\varepsilon^4} \delta_{P}(\mathcal{H}_2) $; \label{cond_h_h2degree} 
		\item $ d(x, v) \le d^{-\varepsilon} \delta_{P}(\mathcal{H}_2) $ for each $ x \in P $ and $ v \in R $. \label{cond_h_h2codegree}
	\end{enumerate}
	
	This means that $ \mathcal{H}_1 $ is essentially regular for vertices in $ P $ and has small codegrees, although vertices in $ Q $ are allowed to have much lower (but not higher) degrees. Meanwhile in $ \mathcal{H}_2 $, every vertex in $ P $ must have degree at least a $ d^{-\varepsilon^4} $ proportion of the maximum degree in $ R $, and few edges in common with any particular vertex in $ R $. These conditions are used to ensure that we may choose an edge of $ \mathcal{H}_2 $ containing each $ x \in P $ such that the set of edges chosen do not overlap.
	
	\subsection{Boundedness conditions on $ \mathcal{C} $}
	
	The conflicts of $ \mathcal{C} $ consist only of edges from $ \mathcal{H}_1 $, and are avoided directly by Theorem~\ref{theorem_cfhm_mod}, so require the same boundedness conditions as in \cite{glock2022conflictfree}; the conditions we present here are not the most general possible, but suffice for most applications.
	For any edge $ e \in \mathcal{H}_1 $, we write $ \gls{nhood2} \coloneqq \{ f \in \mathcal{H}_1: \{ e, f \} \in \mathcal{C} \} $.
	Given $ \ell \ge 2 $ and $ d, \varepsilon > 0 $, say that \textit{$ \mathcal{C} $ is \gls{cbounded}} if
	\begin{enumerate}[(C1)]
		\item $ 2 \le |C| \le \ell $ for all $ C \in \mathcal{C} $; \label{cond_c1}
		\item $ \Delta(\mathcal{C}^{(j)}) \le \ell d^{j - 1} $ for all $ j \in [2, \ell] $; \label{cond_c2}
		\item $ \Delta_{j'}(\mathcal{C}^{(j)}) \le d^{j - j' - \varepsilon} $ for all $ j \in [2, \ell] $ and $ j' \in [2, j - 1] $; \label{cond_c3}
		\item $ |\{ f \in N_{\mathcal{C}}^{(2)}(e): v \in f \}| \le d^{1 - \varepsilon} $ for all $ e \in E(\mathcal{H}_1) $ and $ v \in V(\mathcal{H}_1) $; \label{cond_c4}
		\item $ |N_{\mathcal{C}}^{(2)}(e) \cap N_{\mathcal{C}}^{(2)}(f)| \le d^{1 - \varepsilon} $ for all disjoint $ e, f \in E(\mathcal{H}_1) $. \label{cond_c5}
	\end{enumerate}
	
	Note that, in many applications, all conflicts of $ \mathcal{C} $ have size at least 3, so conditions (C\ref{cond_c4}) and (C\ref{cond_c5}) are vacuously true.
    \new{For further intuition behind conditions (C\ref{cond_c1})--(C\ref{cond_c5}), we refer the reader to the proof overview (Section~1.1) in \cite{glock2022conflictfree}.}
	
	\subsection{Boundedness conditions on $ \mathcal{D} $} \label{section:simply_bounded}
	
	\new{Since it may be the case that the conflicts of $ \mathcal{D} $ consist only of edges from $ \mathcal{H}_2 $, or of two parts from $ \mathcal{H}_1 $ and $ \mathcal{H}_2 $, they must satisfy a new set of conditions to be avoided.}
	We extend our previous notation by writing \gls{mixedconflictunifsub} for the set of conflicts in $ \mathcal{D} $ consisting of $ j_1 $ edges from $ \mathcal{H}_1 $ and $ j_2 $ edges from $ \mathcal{H}_2 $.
	Similarly, we now write
    \new{\begin{equation} \label{eqn:defn_delta_j1j2}
        \gls{maxdegmixed} \coloneqq \max \left\{ d_{\mathcal{D}}(F_1 \cup F_2): F_1 \in {\mathcal{H}_1 \choose j_1'}, F_2 \in {\mathcal{H}_2 \choose j_2'} \right\}.
    \end{equation}}
	Given a vertex $ x \in P $, write also \gls{conflictsbyvertex} for the set of conflicts in $ \mathcal{D} $ containing $ x $ in their $ \mathcal{H}_2 $-part, and likewise \gls{conflictsbyvertexpair} for those containing both $ x $ and $ y $.
	Then say that \textit{$ \mathcal{D} $ is \gls{simplybounded}} if the following hold for all $ x, y \in P $, and $ j_1 \in [0, \ell], j_2 \in [2, \ell] $.
	\begin{enumerate}[(D1)]
		\item $ 2 \le |D \cap \mathcal{H}_2| \le |D| \le \ell $ for each conflict $ D \in \mathcal{D} $; \label{cond_mixed_conflictsize}
		\item $ |\mathcal{D}^{(j_1, j_2)}_x| \le d^{j_1 + \varepsilon^4} \delta_{P}(\mathcal{H}_2)^{j_2} $; \label{cond_mixed_degree}
		\item $ \Delta_{j', 0}(\mathcal{D}^{(j_1, j_2)}_x) \le d^{j_1 - j' - \varepsilon} \delta_{P}(\mathcal{H}_2)^{j_2} $ for each $ j' \in [j_1] $; \label{cond_mixed_codegree1}
		\item $ |\mathcal{D}^{(j_1, j_2)}_{x, y}| \le d^{j_1 - \varepsilon} \delta_{P}(\mathcal{H}_2)^{j_2} $. \label{cond_mixed_codegree2}
	\end{enumerate}
	
    \new{Referring to the brief proof outline in Section~\ref{section_matching}, the bound in (D\ref{cond_mixed_degree}) can be understood heuristically as follows.
    For $ D \in \mathcal{D}_x $ to be contained in $ \mathcal{M} $, all edges of $ D \cap \mathcal{H}_1 $ must be chosen in $ \mathcal{M}_1 $ and all edges of $ D \cap \mathcal{H}_2 $ must be chosen in $ \mathcal{M}_2 $; note in particular that, for the latter to be possible, all vertices $ y \in P \cap \bigcup (D \cap \mathcal{H}_2) $ must be left uncovered by $ \mathcal{M}_1 $.
    Under the heuristic that edges are chosen roughly uniformly and independently by Theorem~\ref{theorem_cfhm_mod}, the probability of the $ j_1 $ edges of $ D \cap \mathcal{H}_1 $ all being contained in $ \mathcal{M}_1 $ is roughly $ d^{-j_1} $.
    Under the heuristic that $ \mathcal{M}_1 $ covers vertices of $ P $ roughly uniformly, the probability that even one vertex $ y \in P \cap \bigcup (D \cap \mathcal{H}_2) \setminus \{ x \} $ is left uncovered is roughly $ d^{-\varepsilon^3} $.
    Therefore, after choosing $ \mathcal{M}_1 $, we expect there to be at most $ d^{\varepsilon^4 - \varepsilon^3} \delta_P(\mathcal{H}_2)^{j_2} $ conflicts $ D \in \mathcal{D}_x $ which pose a threat when choosing $ \mathcal{M}_2 $.
    Ignoring issues of dependence, each of the remaining $ j_2 $ edges of $ D $ are then included in $ \mathcal{M}_2 $ with probability at most $ \delta_P(\mathcal{H}_2)^{-1} $, so in total we expect at most $ d^{\varepsilon^4 - \varepsilon^3} = o(1) $ conflicts from $ \mathcal{D}_x $ to be contained in $ \mathcal{M} $.
    Intuitively, (D\ref{cond_mixed_codegree1}) and (D\ref{cond_mixed_codegree2}) behave similarly to (C\ref{cond_c3}), to ensure that conflicts are `well spread-out' in the hypergraph, which avoids the number of potential conflicts being dominated by rare events with large effects.}

    \new{We may now formally re-state Theorem~\ref{theorem_matching_simple}.}

    \begin{theorem}
		\label{theorem_matching}
		
		\new{Given $ p, q, r, \ell $ satisfying (S\ref{cond:setup_params}), there exists $ \varepsilon_0 > 0 $ such that for all $ \varepsilon \in (0, \varepsilon_0) $, there exists $ d_0 $ such that, for all $d \ge d_0$, given $ \mathcal{H}, \mathcal{C}, \mathcal{D} $ satisfying (S\ref{cond:setup_Psize})--(S\ref{cond:setup_mixed_conflicts}), the following holds.
        Assume that $ \mathcal{H} $ satisfies (H\ref{cond_h_h1degree})--(H\ref{cond_h_h2codegree}), $ \mathcal{C} $ is $ (d, \ell, \varepsilon) $-bounded, and $ \mathcal{D} $ is $ (d, \ell, \varepsilon) $-simply-bounded.
        Then there exists a  $ P $-perfect $ \mathcal{C} \cup \mathcal{D} $-free matching $ \mathcal{M} \subseteq \mathcal{H} $.
        Furthermore, at most $ d^{-\varepsilon^4} |P| $ vertices of $ P $ belong to an edge in $ \mathcal{H}_2 \cap \mathcal{M} $.}
	\end{theorem}

    We actually prove Theorem~\ref{theorem_matching} for a slightly more general set of conditions, given in Section~\ref{section_general_conditions}, which in particular allow for conflicts \new{$ D $ with $ j_2 = |D \cap \mathcal{H}_2| = 1 $}; in many applications, however, this type of conflict does not occur.

\section{Preparation for the Proof}
	\label{section_proof_background}

	In this section, we begin by deducing our required variant of the conflict-free hypergraph matchings theorem of \cite{glock2022conflictfree}, as well as giving the more general set of conditions under which we prove Theorem~\ref{theorem_matching}.
	We then proceed with the proof itself in Section~\ref{section_matching_proof}.
		
	\subsection{The Conflict-Free Hypergraph Matchings Theorem}
	\label{section_proof_background_cfhm}
			
		To state the theorem we need, we must first make a further definition; assume $ \ell $ is a given integer, as in Section~\ref{section_matching}.
        \new{Recalling that we write $ \mathcal{H} $ to refer to $ E(\mathcal{H}) $, and} given $ j \in \mathbb{N} \cup \{ 0 \} $,\footnote{Note that in the original paper $ j $ is assumed to be strictly positive, but the conclusion of the theorem still holds in the trivial case $ j = 0 $. This will simplify our notation later.}
		say that $ w\colon\ {\mathcal{H} \choose j} \to [0, \ell] $ is a \gls{testfunction} for $ \mathcal{H} $ if $ w(E) = 0 $ whenever $ E \in {\mathcal{H} \choose j} $ is not a matching.
		Write $ w(X) \coloneqq \sum_{x \in X} w(x) $ for  $ X \subseteq {\mathcal{H} \choose j} $, and $ w(E) \coloneqq w({E \choose j}) $ for general $ E \subseteq \mathcal{H} $.
		
		Given a vertex $ v $ in a hypergraph $ \mathcal{H} $, the \textit{link of $ v $ in $ \mathcal{H} $} is the hypergraph $ \gls{link} = \{ E \setminus \{ v \} : E \in \mathcal{H}, v \in E \} $ on vertex set $ V(\mathcal{H}) \setminus \{ v \} $, that is the set of all \textit{partial} edges which are completed by $ v $ to form an edge of $ \mathcal{H} $.
		Now say that a pair of edges $ e, f \in \mathcal{H} $ is an \textit{\gls{conflictpair}} (or just \textit{$ \varepsilon $-conflict-sharing pair} since $ \mathcal{C} $ is usually obvious from context) if $ |(\mathcal{C}_{-e})^{(j')} \cap (\mathcal{C}_{-f})^{(j')}| > d^{j' - \varepsilon} $ for some $ j' \in [\ell - 1] $.
		Then given a conflict hypergraph $ \mathcal{C} $ for $ \mathcal{\mathcal{H}} $, \new{a $ j $-uniform $ \ell $-test function $ w $ for $ \mathcal{H} $,} and $ d, \varepsilon > 0 $, we say that \textit{$ w $ is \gls{trackable}} if
		\begin{enumerate}[(W1)]
			\item $ w(\mathcal{H}) \ge d^{j + \varepsilon} $; \label{cond_w1}
			\item $ w(\{ E \in {\mathcal{H} \choose j} : E \supseteq E' \}) \le w(\mathcal{H}) / d^{j' + \varepsilon / 2} $ for all $ j' \in [j - 1] $ and $ E' \in {\mathcal{H} \choose j'} $; \label{cond_w2}
			\item \new{$ w(E) = 0 $ for any $ E \in {\mathcal{H} \choose j} $ with $ e,f \in E $ for some} $ 2 \varepsilon $-conflict-sharing pair $ e, f \in \mathcal{H} $; \label{cond_w3}
			\item $ w(E) = 0 $ for all $ E \in {\mathcal{H} \choose j} $ which are not $ \mathcal{C} $-free. \label{cond_w4}
		\end{enumerate}

        \new{We will make use of trackable test functions in our proof to bound the number of mixed conflicts whose $ \mathcal{H}_1 $-part is chosen in the first matching, meaning that their $ \mathcal{H}_2 $-part needs to be avoided in the second matching.
        For some intuition behind conditions (W\ref{cond_w1})--(W\ref{cond_w4}), we refer the reader again to \cite{glock2022conflictfree}, specifically the explanation of their conditions (Z1)--(Z4) in Section~3.}
		We may now state the original theorem, a simplified variant of the main result from \cite{glock2022conflictfree}.
		Note that the absence of conflict-sharing pairs in the matching is not given by any of the theorem statements in the original paper, but follows from the proof, in which such pairs are added as conflicts of size 2.\footnote{See Lemmas 8.5 and 8.6 in \cite{glock2022conflictfree}, which refer to conflict-sharing pairs as \textit{bad pairs}; note that while the statements given there only avoid $ \varepsilon / 4 $-conflict-sharing pairs, this can easily be improved to $ \varepsilon / 2 $ by being more conservative with $ \varepsilon $, provided that all test functions are zero on sets containing an $ \varepsilon / 2 $-conflict-sharing pair; this follows from (W\ref{cond_w3}), noting that our condition (W\ref{cond_w3}) uses $ 2 \varepsilon $ in place of $ \varepsilon $, unlike in the original paper.
		Similarly, taking such extra care allows us to require only that test functions are $ (d, \varepsilon / 4, \mathcal{C}) $-trackable, rather than with $ \varepsilon $ as in the original statement, and to replace $ \varepsilon $ by $ \varepsilon / 2 $ in condition (W\ref{cond_w2}).}
		
		\begin{theorem}
			\label{theorem_cfhm_simple}
			
			For all $ k, \ell \ge 2 $, there exists $ \varepsilon_0 > 0 $ such that for all $ \varepsilon \in (0, \varepsilon_0) $, there exists $ d_0 $ such that the following holds for all $ d \ge d_0 $.
			Suppose $ \mathcal{H} $ is a $ k $-graph on $ n \le \new{2} \exp(d^{\varepsilon^3}) $ vertices with $ (1 - d^{-\varepsilon}) d \le \delta(\mathcal{H}) \le \Delta(\mathcal{H}) \le d $ and $ \Delta_2(\mathcal{H}) \le d^{1 - \varepsilon} $ and suppose $ \mathcal{C} $ is a $ (d, \ell, \varepsilon) $-bounded conflict hypergraph for $ \mathcal{H} $.
			Suppose also that $ \mathcal{Y} $ is a set of $ (d, \varepsilon / \new{10}, \mathcal{C}) $-trackable \new{$ \ell $-}test functions for $ \mathcal{H} $ of uniformity at most $ \ell $ with $ |\mathcal{Y}| \le \exp(d^{\varepsilon^3}) $.
			Then, there exists a $ \mathcal{C} $-free matching $ \mathcal{M} \subseteq \mathcal{H} $ of size at least $ (1 - d^{-2\varepsilon^3})\frac{n}{k} $ with $ w(\mathcal{M}) = (1 \pm d^{-\varepsilon^3}) d^{-j} w(\mathcal{H}) $ for all $ j $-uniform $ w \in \mathcal{Y} $.
			Furthermore, $ \mathcal{M} $ contains no $ (\varepsilon / 2, \mathcal{C}) $-conflict-sharing pairs.
		\end{theorem}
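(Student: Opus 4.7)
The plan is to deduce Theorem~\ref{theorem_cfhm_simple} from the main result of \cite{glock2022conflictfree} by enlarging the conflict hypergraph with explicit size-$2$ conflicts that forbid the pairs we wish to avoid. Let $\mathcal{P} \subseteq \binom{\mathcal{H}}{2}$ be the set of all $(\varepsilon/2, \mathcal{C})$-conflict-sharing pairs and set $\mathcal{C}^{\ast} = \mathcal{C} \cup \mathcal{P}$; a matching is $\mathcal{C}^{\ast}$-free if and only if it is $\mathcal{C}$-free and contains no $(\varepsilon/2, \mathcal{C})$-conflict-sharing pair. Producing such a matching via the original theorem (whose boundedness hypotheses accommodate size-$2$ conflicts) will therefore give the desired conclusion.

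The first step is to check that $\mathcal{C}^{\ast}$ satisfies the boundedness conditions of the original theorem. For fixed $e \in \mathcal{H}$ and $j' \in [2, \ell-1]$, a standard double-count using (C\ref{cond_c2}) and (C\ref{cond_c3}) yields
\[
\sum_{f \in \mathcal{H}} \left|(\mathcal{C}_e)^{(j')} \cap (\mathcal{C}_f)^{(j')}\right| \;\le\; |(\mathcal{C}_e)^{(j')}| \cdot \Delta_{j'}(\mathcal{C}^{(j'+1)}) \;\le\; \ell d^{j'} \cdot d^{1-\varepsilon},
\]
so at most $\ell d^{1-\varepsilon/2}$ edges $f$ can exceed the threshold $d^{j'-\varepsilon/2}$; the case $j'=1$ is vacuous by (C\ref{cond_c1}). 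Summing over $j' \in [2, \ell-1]$ gives $\Delta(\mathcal{P}) \le \ell^2 d^{1-\varepsilon/2}$, which leaves plenty of room relative to the size-$2$ degree bound required in \cite{glock2022conflictfree}, and the degree/codegree bounds on $\mathcal{C}^{\ast}$ for $j \ge 3$ are inherited verbatim from $\mathcal{C}$.

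The second step is to verify that each $(d, \varepsilon/4, \mathcal{C})$-trackable $w \in \mathcal{Y}$ is trackable for $\mathcal{C}^{\ast}$ in the sense demanded by the original theorem. Conditions (W\ref{cond_w1}) and (W\ref{cond_w2}) pass through immediately because our thresholds $\varepsilon/4$ and $\varepsilon/2$ dominate whatever is needed after the parameter rescaling. Condition (W\ref{cond_w4}) for $\mathcal{C}^{\ast}$ asks that $w$ vanish on every $j$-set containing either a $\mathcal{C}$-conflict or a pair from $\mathcal{P}$; the former is our (W\ref{cond_w4}), and the latter is our (W\ref{cond_w3}), since every $\varepsilon/2$-conflict-sharing pair is automatically $2\varepsilon$-conflict-sharing (the threshold $d^{j'-2\varepsilon}$ being smaller than $d^{j'-\varepsilon/2}$). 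The delicate point is (W\ref{cond_w3}) with respect to $\mathcal{C}^{\ast}$: the links $\mathcal{C}^{\ast}_e$ acquire new singleton semiconflicts from $\mathcal{P}$, which enlarges the common intersections $|(\mathcal{C}^{\ast}_e)^{(j')} \cap (\mathcal{C}^{\ast}_f)^{(j')}|$ beyond their $\mathcal{C}$-values. These extra contributions are controlled by the degree estimate on $\mathcal{P}$ just established, provided one works with a slightly smaller conflict-sharing threshold.

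The main obstacle is this final verification, which requires a careful chase through the various powers of $\varepsilon$ appearing in the boundedness and trackability hierarchies; as the footnote accompanying the theorem statement indicates, this is essentially the content of Lemmas~8.5 and~8.6 of \cite{glock2022conflictfree} repeated with more conservative constants throughout. Once the bookkeeping is complete, the original theorem applied to $(\mathcal{H}, \mathcal{C}^{\ast})$ directly yields the promised matching $\mathcal{M}$ of the required size and weight, which by construction is $\mathcal{C}$-free and contains no $(\varepsilon/2, \mathcal{C})$-conflict-sharing pair.
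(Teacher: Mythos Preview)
The paper does not actually prove Theorem~\ref{theorem_cfhm_simple}: it is stated as a known result from \cite{glock2022conflictfree}, with a footnote explaining that the conflict-sharing-pair conclusion follows from the proof there, where such pairs are added as size-$2$ conflicts (Lemmas~8.5 and~8.6 of \cite{glock2022conflictfree}). Your proposal is exactly this mechanism spelled out --- enlarge $\mathcal{C}$ by the set $\mathcal{P}$ of $(\varepsilon/2,\mathcal{C})$-conflict-sharing pairs, check boundedness via the double-count (which is essentially the paper's own estimate~(\ref{eqn_bound_badpairs})), and verify that trackability survives --- so your approach coincides with what the paper sketches in its footnote.
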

		
		In order to prove Theorem~\ref{theorem_matching}, we require a slight extension, for which we make another definition; we say a $ j $-uniform test function \textit{$ w $ is \gls{semitrackable}} if it satisfies the alternative conditions
		\begin{enumerate}[(W$1^*$)]
			\item $ w(\mathcal{H}) \le d^{j + 2 \varepsilon} $; \label{cond_w1*}
			\item $ w(\{ E \in {\mathcal{H} \choose j} : E \supseteq E' \}) \le \varepsilon^{-1} d^{j - j'} $ for all $ j' \in [j - 1] $ and $ E' \in {\mathcal{H} \choose j'} $; \label{cond_w2*}
		\end{enumerate}		
		as well as (W\ref{cond_w3}) and (W\ref{cond_w4}).
        Observe also here that, if (W\ref{cond_w1}) holds, then (W$\ref{cond_w2*}^*$) implies (W\ref{cond_w2}), so we will usually just check (W$\ref{cond_w2*}^*$) when proving that test functions are trackable.
        
		For such functions, it is not in general possible to guarantee the same estimate \new{$ w(\mathcal{M}) = (1 \pm d^{-\varepsilon^3}) d^{-j} w(\mathcal{H}) $}, because the heuristically expected value is too small and therefore subject to outlying events; for a simple example, let $ v \in V(\mathcal{H})$, choose $ Z $ to be some set of size $ d(v) / 2 $ of edges that contain $ v $, and take $ w $ as the indicator function of $ Z $, then \new{(heuristically, imagining $ \mathcal{M} $ to be appropriately pseudorandom)}, the expected value of $ w(\mathcal{M}) $ is \new{(approximately)} $ 1 / 2 $ but clearly we cannot guarantee concentration close to $ 1 / 2 $ since $ w $ takes values in $ \{ 0, 1 \} $.
		It is however obvious in this case that $ w(\mathcal{M}) $ can be bounded from above, namely by $ 1 $.
		More generally, we might hope that for \new{semi-}trackable test functions, we can make use of the bound on $ w(\mathcal{H}) $ to guarantee that at least $ w(\mathcal{M}) $ is not too large; indeed, this turns out to be possible.
		
		\begin{theorem}
			\label{theorem_cfhm_mod}
			
			Assume the setup of Theorem~\ref{theorem_cfhm_simple}, but allow $ \mathcal{Y} $ to contain also some $ (d, \varepsilon / \new{10}, \mathcal{C}) $-\new{semi-}trackable \new{$ \ell $-}test functions.
			Then there exists a $ \mathcal{C} $-free matching $ \mathcal{M} \subseteq \mathcal{H} $ of size at least $ (1 - d^{-\varepsilon^3})\frac{n}{k} $, containing no $ (\varepsilon / 2, \mathcal{C}) $-conflict-sharing pairs, with $ w(\mathcal{M}) = (1 \pm d^{-\varepsilon^3}) d^{-j} w(\mathcal{H}) $ for all $ j $-uniform trackable $ w \in \mathcal{Y} $ and $ w(\mathcal{M}) \le d^{\varepsilon / \new{4}} $ for all \new{semi-}trackable $ w \in \mathcal{Y} $.
		\end{theorem}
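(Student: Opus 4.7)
The strategy is to convert each almost-trackable test function in $\mathcal{Y}$ into a genuinely trackable one by adding a suitable padding, so that Theorem~\ref{theorem_cfhm_simple} can be applied as a black box. Explicitly, for each uniformity $j \in [\ell]$ occurring among the almost-trackable members of $\mathcal{Y}$, I construct a single $(d, \varepsilon/4, \mathcal{C})$-trackable padding $g_j$ with $g_j(\mathcal{H}) \in [d^{j+\varepsilon/4},\, 2 d^{j+\varepsilon/4}]$ that also satisfies (W$\ref{cond_w2*}^*$). A natural candidate is a positive constant $\alpha_j$ times the indicator of ``good'' $j$-sets, where ``good'' means being a $\mathcal{C}$-free $j$-matching in $\mathcal{H}$ containing no $(\varepsilon/2,\mathcal{C})$-conflict-sharing pair. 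Verification of (W\ref{cond_w1})--(W\ref{cond_w4}) and (W$\ref{cond_w2*}^*$) for $g_j$ reduces to counting good $j$-matchings, both overall and extending a fixed $j'$-matching, which is routine given the degree conditions on $\mathcal{H}$ and the boundedness of $\mathcal{C}$; the calibration $\alpha_j \asymp j! (k/n)^j d^{\varepsilon/4}$ does the job.

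Next, I form the enlarged family $\mathcal{Y}^\star$ by retaining every already-trackable $w \in \mathcal{Y}$, and for each almost-trackable $w \in \mathcal{Y}$ of uniformity $j$ adding both $g_j$ and $w + g_j$. The sum $w + g_j$ has total weight at least $d^{j+\varepsilon/4}$, so it satisfies (W\ref{cond_w1}); (W$\ref{cond_w2*}^*$) passes to sums (up to a harmless doubling of the constant $\varepsilon^{-1}$), and by the observation recorded just after the definition of almost-trackability, (W\ref{cond_w1}) together with (W$\ref{cond_w2*}^*$) yields (W\ref{cond_w2}); finally (W\ref{cond_w3}) and (W\ref{cond_w4}) are inherited because both summands vanish on the forbidden $j$-sets. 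Hence $w + g_j$ is $(d, \varepsilon/4, \mathcal{C})$-trackable, and $|\mathcal{Y}^\star| \le 2 |\mathcal{Y}| + \ell$, still well below $\exp(d^{\varepsilon^3})$.

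Applying Theorem~\ref{theorem_cfhm_simple} to $\mathcal{Y}^\star$ produces a matching $\mathcal{M}$ inheriting every structural property claimed (size, $\mathcal{C}$-freeness, absence of $(\varepsilon/2,\mathcal{C})$-conflict-sharing pairs), and giving the desired trackable estimate for every already-trackable $w \in \mathcal{Y}$. For each almost-trackable $w$ of uniformity $j$, the theorem supplies
\[
(w + g_j)(\mathcal{M}) = (1 \pm d^{-\varepsilon^3}) d^{-j}(w+g_j)(\mathcal{H}), \qquad g_j(\mathcal{M}) = (1 \pm d^{-\varepsilon^3}) d^{-j} g_j(\mathcal{H}).
\]
Subtracting these two estimates and using $w(\mathcal{H}) \le d^{j+\varepsilon/2}$ (from (W$\ref{cond_w1*}^*$) with parameter $\varepsilon/4$) and $g_j(\mathcal{H}) \le 2 d^{j+\varepsilon/4}$ yields
\[
w(\mathcal{M}) \le d^{-j} w(\mathcal{H}) + d^{-\varepsilon^3}\, d^{-j} \bigl((w+g_j)(\mathcal{H}) + g_j(\mathcal{H})\bigr) \le 2 d^{\varepsilon/2} \le d^{2\varepsilon/3}
\]
for $d$ large, using $\varepsilon/2 < 2\varepsilon/3$ to absorb the constant.

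The main obstacle is verifying the sharp co-degree condition (W$\ref{cond_w2*}^*$) for the padding $g_j$; this relies on showing that the number of good $j$-matchings extending a fixed $j'$-matching has the expected leading order $|\mathcal{H}|^{j - j'}/(j-j')!$, with the loss coming from $\mathcal{C}$-freeness and the absence of conflict-sharing pairs being a lower-order correction derived from (C\ref{cond_c1})--(C\ref{cond_c3}) and a union bound over conflict types. A minor subsidiary check is that $n$ is large enough relative to $d^{\varepsilon/4}$ for the calibration of $\alpha_j$ to be admissible; this is automatic from the degree hypotheses on $\mathcal{H}$ provided $\varepsilon_0 < 4/(k-1)$.
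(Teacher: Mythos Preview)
Your approach is correct and genuinely different from the paper's. The paper does not pad using indicator functions on $\mathcal{H}$ itself; instead it \emph{extends the hypergraph}, adjoining $\ell$ vertex-disjoint copies $\mathcal{K}_1,\dots,\mathcal{K}_\ell$ of a complete $k$-graph on $m \asymp d^{1/(k-1)}$ new vertices, and takes the padding $\mathds{1}(\mathcal{Z}_j)$ to be the indicator of a carefully engineered family of $j$-matchings supported entirely on these dummy edges (one edge from each $\mathcal{K}_i$, each hitting a designated set $T_i$ of $2d^{\varepsilon/2}$ vertices in exactly one prescribed vertex). The trackable estimate for $w' = w + \mathds{1}(\mathcal{Z}_j)$ on the enlarged hypergraph then yields the upper bound on $w(\mathcal{M})$ after restricting back.

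Your route---scaling the indicator of good $j$-matchings in $\mathcal{H}$ by $\alpha_j \asymp j!(k/n)^j d^{\varepsilon/4}$---is more economical in that no auxiliary hypergraph is needed and the matching-size conclusion is immediate. The price is the extra hypothesis $\varepsilon_0 < 4/(k-1)$ to ensure $\alpha_j \le \ell$ and the (W$\ref{cond_w2*}^*$) bound, which you correctly deduce from $n \gtrsim d^{1/(k-1)}$; the paper's construction sidesteps this entirely because the size of the dummy graphs is chosen independently of $n$. Both arguments ultimately rely on the same mechanism---producing a trackable companion whose total weight dominates $w(\mathcal{H})$ and subtracting---so the difference is one of implementation rather than idea.
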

		
		\begin{proof}
			Our strategy will be to extend $ \mathcal{H} $ using some new dummy vertices and edges, allowing us to extend $ w $ to a new test function $ w' $ which agrees with $ w $ on $ \mathcal{H} $, but is also positive on sufficiently many subsets of the new edges to satisfy (W\ref{cond_w1}).
			We then obtain the usual estimate for the value of $ w'(\mathcal{M}) $, which in particular gives us the required crude upper bound for $ w(\mathcal{M}) $.
		
			Let $ S $ be a set of $ m $ new vertices, disjoint from $ V(\mathcal{H}) $, with $ m $ chosen such that $ (1 - d^{-\varepsilon}) d \le {m - 1 \choose k - 1} \le d $\new{, noting that this is possible for $ \varepsilon < (k - 1)^{-1} $.}
			Note\COMMENT{Indeed, $ (\frac{m - 1}{k - 1})^{k - 1} \le {m - 1 \choose k - 1} \le d $ so $ m \le 1 + (k - 1) d^{1 / (k - 1)} \le k d^{1/(k - 1)} $. Conversely $ (m - 1)^{k - 1} \ge {m - 1 \choose k - 1} $ so $ m \ge 1 + ((1 - d^{-\varepsilon})d)^{1 / (k - 1)} \ge d^{\varepsilon} $ if $ \varepsilon_0 < (k - 1)^{-1} / 2 $. Clearly $ n \ge m $ otherwise there would not be $ d $ possible edges containing a given vertex.} that $ m \le k d^{1/(k - 1)} $ and $ d^{\varepsilon} \le m \le n $ \new{since $ d \le {n-1 \choose k-1} $.}
			Let $ \mathcal{K} $ be the complete $ k $-graph on vertex set $ S $, and define $ \mathcal{H}' $ to be the (disjoint) union of $ \mathcal{H} $ with $ \ell $ vertex-disjoint copies of $ \mathcal{K} $, say $ \mathcal{K}_1, \ldots, \mathcal{K}_{\ell} $ on vertex sets $ S_1, \ldots, S_{\ell} $, respectively.
			Then, by definition, $ \mathcal{H}' $ is essentially regular as required for Theorem~\ref{theorem_cfhm_simple}, and $ \Delta_2(\mathcal{K}) \le m^{k - 2} \le k^{k - 2} d^{1 - 1 / (k - 1)} \le d^{1 - \varepsilon} $, as we can choose $ \varepsilon_0 < (k - 1)^{-1} / 2 $.
			
			For each $ i \in [\ell] $, choose any subset $ T_i \subseteq S_i $ of size $ t \coloneqq 2 d^{\varepsilon / \new{5}} $, and enumerate its vertices arbitrarily as $ v_1^i, \ldots, v_t^i $.
			For each $ j \in [\ell] $, define
			$$ \mathcal{Z}_j \coloneqq \left\{ \{ e_1, \ldots, e_j \}\in { \mathcal{H}' \choose j}: \exists s \in [t] \text{ such that } e_i \cap T_i = \{ v_s^i \} \ \forall i \in [j] \right\} \subseteq { \mathcal{H}' \choose j}. $$
            We claim that the indicator function $ \mathds{1}(\mathcal{Z}_j) $ is a $ (d, \varepsilon / \new{5}, \mathcal{C}) $-trackable \new{$ \ell $-}test function, for each $ j \in [\ell] $.
			
			Clearly every element of $ \mathcal{Z}_j $ is a matching.
			For (W\ref{cond_w1}), observe that $ d^{j + \varepsilon / \new{5}} \le |\mathcal{Z}_j| \le 2 d^{j + \varepsilon / \new{5}} $, since there are $ t=2 d^{\varepsilon / \new{5}} $ possible choices of $ s $, and then at least $ (1 - d^{-\varepsilon / 3}) d $ (and at most $ d $) choices for each edge $ e_i $.
			Indeed, the number of choices for $ e_i $ is exactly the number of edges containing $ v_s^i $ but no other element of $ T_i $; by (H\ref{cond_h_h1codegree}), the number of edges containing $ v_s^i $ and any other element $ u \in T_i $ is bounded by $ 2 d^{\varepsilon / \new{5}} \cdot d^{1 - \varepsilon} \le 2 d^{1 - \varepsilon / \new{5}} $, so the desired bound follows.
			
			For (W$\ref{cond_w2*}^*$), note that given a set $ E' \in {\mathcal{H}' \choose j'} $ for some $ j' \in [j - 1] $, which belongs to some set $ E \in \mathcal{Z}_j $, the choice of $ s \in [t] $ is fixed by $ E' $, as are $ j' $ of the edges of $ E $. Hence $ \mathds{1}(\mathcal{Z}_j)(\{ E \in {\mathcal{H} \choose j} : E \supseteq E' \}) \le d^{j - j'} $.
			The conditions (W\ref{cond_w3}) and (W\ref{cond_w4}) are trivial since there are no conflicts containing edges from $ \mathcal{H}' \setminus \mathcal{H} $.
			
			Now we may define, for each\footnote{Note that the case $ j = 0 $ is trivial.} $ j \in [\ell] $ and each $ j $-uniform \new{semi-}trackable $ w \in \mathcal{Y} $, a new test function $ w'\colon\ {\mathcal{H}\new{'} \choose j} \to [0, 1] $ by $ w'(E) \coloneqq w(E) + \mathds{1}(\mathcal{Z}_j)(E) $ \new{for $ E \subseteq \mathcal{H} $, and $ w'(E) \coloneqq 0 $ otherwise,} and observe that $ w' $ is $ (d, \varepsilon / \new{10}, \mathcal{C}) $-trackable.
			Indeed, $ w'(\mathcal{H}') = w'(\mathcal{H}) \ge |\mathcal{Z}_j| \ge d^{j + \varepsilon / \new{5}} $ so (W\ref{cond_w1}) is satisfied, and (W\ref{cond_w2}) follows from (W$\ref{cond_w2*}^*$) for $ w $ and $ \mathds{1}(\mathcal{Z}_j) $, using the fact that $ \new{(\varepsilon^{-1} + 1)} d^{j - j'} \le d^{-j' - \varepsilon / \new{10}} w'(\mathcal{H}) $.
			The remaining conditions (W\ref{cond_w3}) and (W\ref{cond_w4}) for $ w' $ follow immediately from the fact that they hold for $ w $ and $ \mathds{1}(\mathcal{Z}_j) $.
            \new{Note that $ \mathcal{C} $ is also $ (d, \ell, \varepsilon) $-bounded for $ \mathcal{H}' $ by definition.}
            Since also $ w'(\mathcal{H}) \le 3 d^{j + \varepsilon / \new{5}} $, applying Theorem~\ref{theorem_cfhm_simple} to $ \mathcal{H}' $ with each $ w $ replaced by the corresponding $ w' $ gives us a \new{$ \mathcal{C}$-free matching $ \mathcal{M}' \subseteq \mathcal{H}' $ containing no $(\varepsilon/2, \mathcal{C})$-conflict-sharing pairs} such that $ w'(\mathcal{M}') = (1 \pm d^{-\varepsilon^3}) d^{-j} w'(\mathcal{H}') \le 4 d^{\varepsilon / \new{5}} $.
			
			Let $ \mathcal{M} \coloneqq \mathcal{M}' \cap \mathcal{H} $. Then $ w(\mathcal{M}) \le w'(\mathcal{M}') \le d^{\varepsilon / \new{4}} $ as required.
			Clearly $ \mathcal{M} $ is $ \mathcal{C} $-free and contains no conflict-sharing pairs.
            Since $ |V(\mathcal{H}')| \le (\ell + 1) |V(\mathcal{H})|$, it follows that $ \mathcal{M} $ covers all but a $ d^{-\varepsilon^3} $-fraction of $ \mathcal{H} $.
		\end{proof}
		
	\subsection{The Lovász Local Lemma}
		We now state our other prerequisite, the well-known Lovász Local Lemma, a version of which was originally introduced by Erdős and Lovász \cite{erdos1974problems}.
		The form we use is an immediate corollary to the general form proved by Spencer \cite{spencer1977asymptotic}.

		\begin{lemma}
			\label{lemma_local_mid}
			Let $ \mathfrak{A} = \{ A_1, \ldots, A_n \} $ be a finite set of events in a probability space, and suppose that, for each $ i \in [n] $, there exists a set $ \mathfrak{B}(i) \subseteq [n] $ such that $ A_i $ is mutually independent from $ \{ A_j: j \in [n] \setminus \mathfrak{B}(i) \} $.
			Suppose also that for each $ i \in [n] $ we have $ \mathbb{P}[A_i] < 1 / 2 $ and $ \sum_{j \in \mathfrak{B}(i)} \mathbb{P}[A_j] \le 1 / 4 $.
			Then $ \mathbb{P}[A_1^C \cap \cdots \cap A_n^C] > 0 $.
		\end{lemma}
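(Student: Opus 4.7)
The plan is to derive this symmetric-style version from Spencer's general (asymmetric) Local Lemma, which states that given events $A_1, \ldots, A_n$ with dependency sets $\mathfrak{B}(i)$ (where $i \in \mathfrak{B}(i)$), and weights $x_1, \ldots, x_n \in [0,1)$ satisfying $\mathbb{P}[A_i] \le x_i \prod_{j \in \mathfrak{B}(i) \setminus \{i\}} (1 - x_j)$ for every $i$, one has $\mathbb{P}\bigl[\bigcap_i A_i^C\bigr] \ge \prod_i (1 - x_i) > 0$. The task is therefore reduced to producing an appropriate choice of weights $x_i$ from the hypotheses of the lemma.

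The natural choice is $x_i \coloneq 2\mathbb{P}[A_i]$, which lies in $[0, 1)$ by the hypothesis $\mathbb{P}[A_i] < 1/2$. To verify Spencer's condition it then suffices to show $\prod_{j \in \mathfrak{B}(i) \setminus \{i\}} (1 - 2\mathbb{P}[A_j]) \ge 1/2$ for each $i$, since then $x_i \prod_{j} (1 - x_j) \ge 2\mathbb{P}[A_i] \cdot (1/2) = \mathbb{P}[A_i]$ as required.

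The estimate on the product follows from the elementary Weierstrass-type inequality $\prod_j (1 - y_j) \ge 1 - \sum_j y_j$, valid whenever all $y_j \in [0, 1]$, proved by a one-line induction using $(1 - a)(1 - b) \ge 1 - a - b$ for $a, b \in [0, 1]$. Applied with $y_j = 2\mathbb{P}[A_j]$ and the hypothesis $\sum_{j \in \mathfrak{B}(i)} \mathbb{P}[A_j] \le 1/4$, this gives $\prod_{j \in \mathfrak{B}(i) \setminus \{i\}} (1 - 2\mathbb{P}[A_j]) \ge 1 - 2 \sum_{j \in \mathfrak{B}(i)} \mathbb{P}[A_j] \ge 1/2$, completing the verification.

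There is no real obstacle here: the only mild subtlety is a bookkeeping one, namely handling whether $i$ itself belongs to $\mathfrak{B}(i)$, but since $\mathbb{P}[A_i] < 1/2$ any slack absorbs this. Spencer's theorem then yields $\mathbb{P}\bigl[\bigcap_i A_i^C\bigr] \ge \prod_i (1 - 2\mathbb{P}[A_i]) > 0$, which is the desired conclusion.
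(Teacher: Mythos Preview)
Your proposal is correct and matches the paper's approach exactly: the paper does not give a proof but simply states that this version ``is an immediate corollary to the general form proved by Spencer,'' which is precisely what you carry out by choosing weights $x_i = 2\mathbb{P}[A_i]$ and applying the Weierstrass product inequality.
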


\subsection{More general conditions} \label{section_general_conditions}
	
	We prove Theorem~\ref{theorem_matching} using slightly weaker conditions  than those given in Section~\ref{section_matching}, allowing the case $ j_2 \new{= |D \cap \mathcal{H}_2|} = 1 $, and giving more freedom to high-degree vertices in $ P $.
    Given a set of edges $ E \subseteq \mathcal{H} $, write
	$$ \gls{edgesetpvertices} \coloneqq \{ y \in P: y \in e \text{ for some } e \in E \cap \mathcal{H}_2 \} $$
	for the set of vertices of $ P $ found in edges of $ \mathcal{H}_2 $ in $ E $.
    \new{A limitation of conditions (D\ref{cond_mixed_conflictsize})--(D\ref{cond_mixed_codegree2}) is that the number of conflicts is governed uniformly by the the minimum degree in $ \mathcal{H}_2 $.
    Intuitively, given some conflict $ E \in \mathcal{D} $, if the vertices in $ V_P(E) $ have much higher degrees in $ \mathcal{H}_2 $, then it should be easier to avoid $ E $ because we have more flexibility when choosing edges of $ \mathcal{H}_2 $ which are used to cover the vertices of $ V_P(E) $ in $ \mathcal{M}_2 $.
    As such, we can afford to have more conflicts involving vertices of $ P $ with higher degrees in $ \mathcal{H}_2 $; this motivates the following quantity, which allows us to weight these conflicts accordingly when bounding their number.}
	
	We define the \textit{unavoidability of $ E $} to be
    \begin{equation} \label{eqn_unavoidability_defn}
        \gls{unavoidability} \coloneqq \prod_{y \in V_P(E)} d_{\mathcal{H}_2}^{-1}(y),
    \end{equation}
	and extend this definition to a conflict hypergraph $ \new{\mathcal{G}} $ by taking the sum over all conflicts, that is
	$ A(\new{\mathcal{G}}) \coloneqq \sum_{E \in \new{\mathcal{G}}} A(E). $
    This has a natural interpretation.
    Suppose we select \new{for each vertex $ v \in P $ one edge in $ \mathcal{H}_2 $ containing $ v $} independently and uniformly at random.
    Then $ A(\new{\mathcal{G}}) $ is the expected number of conflicts in $ \new{\mathcal{G}} $ of which all edges are selected.
    
	Given $ j_1', j_2' \in [\ell] $ and sets $ C \in {\mathcal{H}_1 \choose j_1'} $ and $ D \in {\mathcal{H}_2 \choose j_2'} $, write $ \gls{conflictsextension} \coloneqq \{ E \in \new{\mathcal{G}}: C \cup D \subseteq E \} $.
	Then we may define
    \begin{equation} \label{eqn_unavoidability_degree_defn}
        \gls{maxdegreeunavoidability} \coloneqq \max_{C \in {\mathcal{H}_1 \choose j_1'}, D \in {\mathcal{H}_2 \choose j_2'}} A(\new{\mathcal{G}}_{[C, D]}),
    \end{equation}
	which can be thought of as analogous to the maximum $ (j_1', j_2') $-degree \new{$ \Delta_{j_1', j_2'}(\mathcal{G}) $ (recalling the definition \eqref{eqn:defn_delta_j1j2})}, but with each conflict weighted by its unavoidability.
	Say that \textit{$ \mathcal{D} $ is \gls{mixedbounded}} if the following hold for all $ x, y \in P $ and $ j_1 \in [0, \ell], j_2 \in [\ell] $.
	\begin{enumerate}[(E1)]
		\item $ |D \cap \mathcal{H}_2| \ge 1 $ and $ |D| \in [2, \ell] $ for each conflict $ D \in \mathcal{D} $; \label{cond_mixed_general_conflictsize}
		\item $ A(\mathcal{D}^{(j_1, j_2)}_x) \le d^{j_1 + \delta} $; \label{cond_mixed_general_degree}
		\item $ \Delta^A_{j', 0}(\mathcal{D}^{(j_1, j_2)}_x) \le d^{j_1 - j' - \varepsilon} $ for each $ j' \in [j_1] $; \label{cond_mixed_general_codegree1}
		\item $ A(\mathcal{D}^{(j_1, j_2)}_{x, y}) \le d^{j_1 - \varepsilon} $ whenever $ j_2 \ge 2 $; \label{cond_mixed_general_codegree2}
		\item $ \Delta_{0, 1}(\mathcal{D}^{(j_1, 1)}_x) \le \ell d^{j_1} $; \label{cond_mixed_general_degree_j21}
		\item $ \Delta_{j', 1}(\mathcal{D}^{(j_1, 1)}_x) \le d^{j_1 - j' - \varepsilon} $ for each $ j' \in [j_1 - 1] $. \label{cond_mixed_general_codegree_j21}
	\end{enumerate}

	Observe that the conditions (E\ref{cond_mixed_general_degree}), (E\ref{cond_mixed_general_codegree1}), (E\ref{cond_mixed_general_codegree2}) are analogous to (D\ref{cond_mixed_degree}), (D\ref{cond_mixed_codegree1}), (D\ref{cond_mixed_codegree2}), except conflicts are now weighted by their unavoidability; in particular, for a conflict $ E \in \mathcal{D}^{(j_1, j_2)} $, it is always the case that $ A(E) \le \delta_{P}(\mathcal{H}_2)^{-j_2} $, so $ (d, \ell, \varepsilon) $-simply-bounded implies $ (d, \ell, \varepsilon, \varepsilon^4) $-mixed-bounded.
    \new{
    Indeed, (D\ref{cond_mixed_degree}), (D\ref{cond_mixed_codegree1}), (D\ref{cond_mixed_codegree2}) imply (E\ref{cond_mixed_general_degree}), (E\ref{cond_mixed_general_codegree1}), (E\ref{cond_mixed_general_codegree2}), respectively, using the fact that $A(E) \le |E| \delta_P (\mathcal{H}_2)^{-j_2}$ for every $E \subseteq \mathcal{D}^{(j_1,j_2)}$.
    If (D\ref{cond_mixed_conflictsize}) holds then so does (E\ref{cond_mixed_general_conflictsize}), and in this case both of the conditions (E\ref{cond_mixed_general_degree_j21}) and (E\ref{cond_mixed_general_codegree_j21}) are vacuous.
    }
    Note also that conditions (E\ref{cond_mixed_general_degree_j21}) and (E\ref{cond_mixed_general_codegree_j21}) are stronger versions of (E\ref{cond_mixed_general_degree}) and (E\ref{cond_mixed_general_codegree1}) in the case that $ j_2 = 1 $, requiring small degrees and codegrees for every individual edge $ e \in \mathcal{H}_2 $, rather than just for the sum over all such $ e $ containing a particular vertex $ x \in P $.
	
	In a similar fashion, we may also weaken the degree conditions (H\ref{cond_h_h2degree}) and (H\ref{cond_h_h2codegree}) slightly. Define analogously the \textit{unavoidability of a vertex $ v \in R $} to be
	$$ \gls{unavoidabilityvertex} \coloneqq \sum_{x \in P} \frac{d_{\mathcal{H}_2}(x, v)}{d_{\mathcal{H}_2}(x)} \text{.} $$
	Then we may require only that for all $ x \in P $ and $ v \in R $,
	\begin{enumerate}[(H$1'$)]
		\setcounter{enumi}{2}
		\item $ A(v) \le d^{\varepsilon^4} $ (and in particular $ \delta_{P}(\mathcal{H}_2) \ge 1 $); \label{cond_h_h2d'}
		\item $ d_{\mathcal{H}_2}(x, v) \le d^{-\varepsilon} d_{\mathcal{H}_2}(x) $. \label{cond_h_h2cd'}
	\end{enumerate}
	\new{Recalling that each edge of $ \mathcal{H}_2 $ contains exactly one vertex of $ P $}, note that by definition $A(v) \delta_{P}(\mathcal{H}_2) \le \sum_{x \in P} d(x, v) = d_{\mathcal{H}_2}(v) $ so (H\ref{cond_h_h2degree}) implies (H$\ref{cond_h_h2d'}'$), and clearly (H\ref{cond_h_h2codegree}) implies (H$\ref{cond_h_h2cd'}'$).
	Observe furthermore that (H$\ref{cond_h_h2d'}'$) and (H$\ref{cond_h_h2cd'}'$) together imply that $ \delta_{P}(\mathcal{H}_2) \ge d^{\varepsilon} $; indeed, by (H$\ref{cond_h_h2d'}'$) we must have $ d_{\mathcal{H}_2}(x) \ge 1 $ \new{for every $ x \in P $}, so there is some $ v \in R $ with $ d_{\mathcal{H}_2}(x, v) \ge 1 $, and then by (H$\ref{cond_h_h2cd'}'$) this means $ d_{\mathcal{H}_2}(x) \ge d^{\varepsilon} $.

\section{Proof of Theorem~\ref{theorem_matching}}
	\label{section_matching_proof}

	We assume the setting described in Theorem~\ref{theorem_matching}, and devote the entirety of this section to its proof.
    However, write $ \mathcal{D}' $ in place of $ \mathcal{D} $, and assume that it is a $ (d, \ell, \varepsilon, \varepsilon^4) $-mixed-bounded conflict hypergraph.
    \new{Introduce new constants $ \gls{constants} \in \mathbb{N} $ according to the hierarchy $ 0 < 1/d \ll \varepsilon \ll 1/\ell' \ll 1/\Gamma \ll 1/i^* \ll 1/\ell, 1/k $.}

    \subsection{\new{Proof outline}}

    Recall our two-stage proof method outlined in Section~\ref{section_matching}.
    We first apply Theorem~\ref{theorem_cfhm_mod} to $ \mathcal{H}_1 $ to give a conflict-free matching $ \mathcal{M}_1 \subseteq \mathcal{H}_1 $ covering most of the vertices of $ P $.
	We then extend this to a covering of $ P $ by randomly choosing, for each vertex $ x \in P $ which is not already covered by $ \mathcal{M}_1 $, some edge from $ \mathcal{H}_2 $ to cover $ x $; call the resulting set of edges $ \mathcal{M}_2 \subseteq \mathcal{H}_2 $.
    Using the Lovász Local Lemma, we show that with non-zero probability $ \mathcal{M}_2 $ is indeed a matching, and the full matching $ \mathcal{M} \coloneqq \mathcal{M}_1 \cup \mathcal{M}_2 $ is conflict-free.
    \new{This approach presents several challenges which must be overcome.}

    \new{Firstly, Theorem~\ref{theorem_cfhm_mod} requires $ \mathcal{H}_1 $ to be essentially vertex-regular, but (H\ref{cond_h_h1degree}) provides no lower bound on the degrees of vertices in $ Q $.}
    We solve this \new{in Section~\ref{section_proof_regularising}} by adding dummy vertices and edges to increase the vertex degrees in $ Q $.
    \new{Secondly, it need not be the case that the randomly chosen edges of $ \mathcal{M}_2 $ form a matching.
    We solve this in Section~\ref{section_proof_matching}} by simply adding non-disjoint pairs of edges in $ \mathcal{H}_2 $ as additional conflicts of size 2, replacing $ \mathcal{D}' $ by a larger conflict hypergraph $ \mathcal{D} $, \new{and thus forcing $ \mathcal{M}_2 $ to be a matching.}

    \new{The next challenge is to ensure that we can choose $ \mathcal{M}_2 $ such that $ \mathcal{M} $ is conflict-free.
    The conflicts we must avoid are of the form $ C \cup D $, where two conditions hold:} $ C \subseteq \mathcal{M}_1 $ and no edge in $ D $ contains a vertex of $ P $ which has already been chosen in $ \mathcal{M}_1 $.
    \new{If the second condition does not hold, we say that $ D $ is \textit{blocked}.
    In order to ensure that no such $ D $ is contained in $ \mathcal{M}_2 $, we seek to bound from above the total number of unblocked potential conflicts involving a given vertex $ x \in P $ in some edge of $ D \subseteq \mathcal{H}_2 $;} since most vertices of $ P $ are already covered by $ \mathcal{M}_1 $, this significantly reduces the number of conflicts we need to consider.
    In Section~\ref{section_proof_conflicts}, we define test functions to track the number of partial conflicts appearing in the matching $ \mathcal{M}_1 $, as well as the number of those which are blocked; by taking the difference of these two values we later obtain the desired bound.

    \new{The final challenge to overcome} is that this approach to blocking conflicts fails when $ j_2 = 1 $, since the conflicts contain no other edges of $ \mathcal{H}_2 $ which might be blocked.
    \new{We therefore require the stricter bound $ \ell d^{j_1} $ in (E\ref{cond_mixed_general_degree_j21}) on the number of such} conflicts of a given size $ j_1 $ containing a given edge $ e \in \mathcal{H}_2 $.
    Under the assumption that edges in $ \mathcal{H}_1 $ appear in the matching $ \mathcal{M}_1 $ `pseudorandomly with probability $ d^{-1} $' in some suitable sense, this allows the `expected number' of potential conflicts (of any size) for each $ e $ to be up to $ \ell^2 $; so choosing $ e $ purely at random still will not suffice.
    \new{To overcome this, we observe that,} assuming that the Poisson paradigm applies to our concept of pseudorandomness, there should be a constant proportion (specifically, at least $ \exp(-\ell^2) $) of the edges $ e $ containing any given vertex $ x $ with no potential conflicts arising from $ \mathcal{M}_1 $.
    Hence for each vertex $ x \in P $ we may restrict our random choice of $ \mathcal{M}_2 $ to only these \textit{safe} edges.
    We encode this `pseudorandom' behaviour in a further set of test functions, defined in Section~\ref{section_matching_proof_j21}.
    
    \new{In Section~\ref{section_proof_construction}}, we apply Theorem~\ref{theorem_cfhm_mod} to obtain our matching $ \mathcal{M}_1 $, randomly choose $ \mathcal{M}_2 $, and use the previously defined test functions to bound the number of potential conflicts in each case, so that $ \mathcal{M}_2 $ is conflict-free with non-zero probability.
	
	\subsection{Regularising $ \mathcal{H}_1 $}
	\label{section_proof_regularising}
	
		First, in order to apply Theorem~\ref{theorem_cfhm_mod} to the hypergraph $ \mathcal{H}_1 $, we need to ensure that all vertices in $ \mathcal{H}_1 $ have degree roughly $ d $, rather than just those in $ P $; since we only require a $ P $-perfect matching, and make no statement about which vertices of $ Q $ are covered, we can achieve this by simply adding dummy edges to boost the degrees of vertices in $ Q $.
		
		Assume that $ q > 0 $ (otherwise we may skip this step), and let $ m \coloneqq |Q| $.
        For each vertex $ v \in Q $, let $ d' \coloneqq d - d_{\mathcal{H}_1}(v) $ and add $ d' $ new edges, each containing $ v $ and a set of $ k - 1 $ new vertices, such that each new vertex is only ever contained in one edge.
        Refer to the set of new vertices as $ Q' $ and the set of new edges as $ \mathcal{E}_Q $.
        Add further new vertices until $ Q' $ contains $ |Q| dm $ vertices.
        Next, let $ \mathcal{F}' $ be a binomial random $ k $-graph with vertex set $ Q' $ and expected degree $ (1 - d^{-\varepsilon} / 2) d $.
        It is routine to show that, \new{with high probability,} $ (1 - d^{-\varepsilon})d \le d_{\mathcal{F}'}(v) \le d - 1 $ and $ d_{\mathcal{F}'}(u, v) \le d^{1 - \varepsilon} $ for all distinct $ u, v \in Q' $\new{; take $ \mathcal{F} $ to be a graph satisfying these properties.}
        Note that each $ v \in Q' $ belongs to at most one edge in $ \mathcal{E}_Q $, so in total $ d_{\mathcal{H}}(v) \le d $.
        Define the hypergraph $ \mathcal{H}_1' \coloneqq \mathcal{H}_1 \cup \mathcal{E}_Q \cup \mathcal{F} $, which we will use in place of $ \mathcal{H}_1 $ later in the proof.

	\subsection{Ensuring a Matching in $ \mathcal{H}_2 $}
	\label{section_proof_matching}

        Let
        $ \mathcal{E} \coloneqq \{ \{ e, f \} \subseteq \mathcal{H}_2 : \emptyset \ne e \cap f \subseteq R \} $
        be the conflict hypergraph containing as conflicts all pairs of edges in $ \mathcal{H}_2 $ that overlap (only) in $ R $.
        We show that $\mathcal{E}$ is a $ (d, \ell, \varepsilon \new{/2}, \new{2}\varepsilon^4) $-mixed bounded hypergraph by verifying that it satisfies (E\ref{cond_mixed_general_conflictsize})-(E\ref{cond_mixed_general_codegree_j21}).
        Note that in particular this implies that $ \mathcal{D} \coloneqq \mathcal{D}' \cup \mathcal{E} $ is $ (d, \ell, \varepsilon \new{/3}, \new{3} \varepsilon^4) $-mixed-bounded\new{; indeed, (E\ref{cond_mixed_general_conflictsize}) is trivial, the bounds in conditions (E\ref{cond_mixed_general_degree}), (E\ref{cond_mixed_general_codegree1}), (E\ref{cond_mixed_general_codegree2}) are additive and the bounds in (E\ref{cond_mixed_general_degree_j21}) and (E\ref{cond_mixed_general_codegree_j21}) are unaffected by $ \mathcal{E} $.}
        
		Indeed, the condition (E\ref{cond_mixed_general_conflictsize}) is clear and (E\ref{cond_mixed_general_codegree1}), (E\ref{cond_mixed_general_degree_j21}), (E\ref{cond_mixed_general_codegree_j21}) are trivial.
		For (E\ref{cond_mixed_general_degree}), note that \new{we need only consider the case} $(j_1, j_2)=(0,2)$ and suppose that $ D = \{ e, f \} $ is a conflict containing $ x \in P $; say $ x \in e $, and let $ v \in e \cap f \cap R $ and $ y \in f \cap P $, noting that $x \ne y $.
		The number of such conflicts $ D $, given $ x, v, y $, is exactly $ d_{\mathcal{H}_2}(x, v) d_{\mathcal{H}_2}(y, v) $. Moreover, \new{recalling definitions \eqref{eqn_unavoidability_defn} and \eqref{eqn_unavoidability_degree_defn}}, as $|e \cap P|=|f \cap P|=1$, we have $ A(D) = d_{\mathcal{H}_2}(x)^{-1} d_{\mathcal{H}_2}(y)^{-1} $, so we obtain
		$$ A(\mathcal{E}_x) = \sum_{D \in \mathcal{E}_x} A(D) \le \sum_{v \in R} \sum_{y \in P} \frac{d_{\mathcal{H}_2}(x, v) d_{\mathcal{H}_2}(y, v)}{d_{\mathcal{H}_2}(x) d_{\mathcal{H}_2}(y)}
			= \sum_{v \in R} \frac{d_{\mathcal{H}_2}(x, v)}{d_{\mathcal{H}_2}(x)} A(v) \le \new{r} \cdot d^{\varepsilon^4}, $$
        which suffices, using the fact that $ A(v) \le d^{\varepsilon^4} $ by (H$\ref{cond_h_h2d'}'$), \new{and noting that we allow the case $ y = x $ in the sum, as we seek only an upper bound.}
		For (E\ref{cond_mixed_general_codegree2}), observe that similarly
		$$ A(\mathcal{E}_{x, y}) \le \sum_{v \in R} \frac{d_{\mathcal{H}_2}(x, v)}{d_{\mathcal{H}_2}(x)} \frac{d_{\mathcal{H}_2}(y, v)}{d_{\mathcal{H}_2}(y)}
			\stackrel{(\text{H}\ref{cond_h_h2cd'}')}{\le} d^{-\varepsilon} \sum_{v \in R} \frac{d_{\mathcal{H}_2}(x, v)}{d_{\mathcal{H}_2}(x)}
			= \new{r} d^{-\varepsilon}, $$
		which suffices.
		The rest of the proof will show that all of the conflicts in $ \mathcal{D} $ can indeed be avoided when choosing $ \mathcal{M}_2 $.

	\subsection{Tracking mixed conflicts: the case $ j_2 \ge 2 $}
	\label{section_proof_conflicts}
        In this section, we define a set of test functions and show that they are either trackable or \new{semi-}trackable.
        These will be useful in Section~\ref{section_bound_unblocked_conflicts} for showing that the expected number of unblocked conflicts can be appropriately bounded from above such that the Lovász Local Lemma can be used to avoid them.
		
		For the remainder of this section, we fix $ j_1 \in [0, \ell] $ and $ j_2 \in [2, \ell] $, and consider only conflicts with $ j_1 $ and $ j_2 $ edges from $ \mathcal{H}_1 $ and $ \mathcal{H}_2 $ respectively; for ease of notation, let $ \mathcal{G} = \mathcal{D}^{(j_1, j_2)} $ \new{(recalling the notation from Section~\ref{section:simply_bounded})}.
		For $i \in [2]$, we write
		$ \gls{hiparts} \coloneqq \{ E \cap \mathcal{H}_i : E \in \mathcal{G} \} $
		for the set of \textit{$ \mathcal{H}_i $-parts} of conflicts in $ \mathcal{G} $.

	\subsubsection{Defining test functions} \label{section_deftestfn}

        We extend our previous notation \new{from Section~\ref{section_general_conditions}} by writing $ \gls{conflictscontc} = \mathcal{G}_{[C, \emptyset]} $, $ \gls{conflictscontcx} = (\mathcal{G}_{[C]})_{x} $ and $ \gls{conflictscontcxy} = (\mathcal{G}_{[C]})_{x, y} $, for any $ C \subseteq \mathcal{H}_1 $ and $ x, y \in P $.
        For the remainder of this section, fix a vertex $ x \in P $.
		We first define $ w_x\colon\ {\mathcal{H}_1 \choose j_1} \to \mathbb{R}_{\ge 0} $ by
		$$ \gls{testfnwx}(C) \coloneqq A(\mathcal{G}_{[C], x}), $$
        \new{recalling \eqref{eqn_unavoidability_defn}.}
        Given a matching $ \mathcal{M}_1 \subseteq \mathcal{H}_1 $ containing $ C $ (but not covering $ x $), if we choose $ \mathcal{M}_2 \subseteq \mathcal{H}_2 $ randomly, then the function $ w_x(C) $ represents the expected number of conflicts from $ \mathcal{G} $ present in $ \mathcal{M} = \mathcal{M}_1 \cup \mathcal{M}_2 $ which have $ C $ as their $ \mathcal{H}_1 $-part and contain $ x $ in their $ \mathcal{H}_2 $-part.
        As such, $ w_x(\mathcal{M}_1) $ represents the expected number of conflicts containing $ x $ in their $ \mathcal{H}_2 $-part for which the $ \mathcal{H}_1 $-part is \new{covered by} $ \mathcal{M}_1 $.
        
        Next, in order to ensure that the test functions we define are trackable, we make an intermediate definition.
        Say that a set of edges $ E \subseteq \mathcal{H} $ is \textit{\gls{testable}} if it satisfies the following conditions:
        \begin{itemize}
            \item $ E \cap \mathcal{H}_1 $ is a matching;
            \item $ E $ is $ \mathcal{C} $-free;
            \item $ E $ contains no $ \varepsilon / 2 $-conflict-sharing pairs of edges (as defined in Section~\ref{section_proof_background_cfhm}).
        \end{itemize}
        Say that $ E $ is \textit{untestable} if it is not testable.\footnote{\new{This name reflects the fact that we must ignore such sets when defining our test functions, in order to ensure that the functions are trackable (as per the definition in Section~\ref{section_proof_background_cfhm}).}}
        We may \new{(and do henceforth)} assume without loss of generality that all of the conflicts in $ \mathcal{D} $ are testable.
        \new{Indeed, letting $ \overline{\mathcal{D}} \subseteq \mathcal{D} $ be the set of all testable conflicts, it is clear that $ \overline{\mathcal{D}} $ is also $ (d, \ell, \varepsilon/3, 3\varepsilon^4) $-mixed-bounded.
        Furthermore, if we are able to find a matching $ \mathcal{M}_2 \subseteq \mathcal{H}_2 $ such that $ \mathcal{M} = \mathcal{M}_1 \cup \mathcal{M}_2 $ is $ \overline{\mathcal{D}} $-free, with $ \mathcal{M}_1 \subseteq \mathcal{H}_1 $ obtained from Theorem~\ref{theorem_cfhm_mod}, then $ \mathcal{M}_1 $ may not contain any untestable set, which in particular means that $ \mathcal{M} $ is also $ \mathcal{D} $-free.}
        Similarly we may assume that $ w_x(C) = 0 $ for any $ C $ containing $ x $\new{, because if $ C $ is covered by $ \mathcal{M}_1 $ then no edge containing $ x $ can be chosen in $ \mathcal{M}_2 $.}
		
        We may now define further $ w_x'\colon\ {\mathcal{H}_1 \choose j_1 + 1} \to \mathbb{R}_{\ge 0} $ by
		$$ \gls{testfnwx'}(C') \coloneqq \mathds{1}(C' \text{ testable}) \sum_{e \in C'} \sum_{y \in (e \cap P) \setminus \{ x \}} A(\mathcal{G}_{[C' \setminus \{ e \}], x, y}). $$
		\new{Observe that} the function $ w_x'(C') $ represents the sum, over each possible partition $ C' = C \cup \{ e \} $, of the expected number of those conflicts counted by $ w_x(C) $ whose $ \mathcal{H}_2 $-part shares some vertex $ y \in P \setminus \{ x \} $ with the edge $ e \in \mathcal{H}_1 $.
        In particular this means $ w_x'(\mathcal{M}_1) $ represents the expected number of those conflicts counted by $ w_x(\mathcal{M}_1) $ whose $ \mathcal{H}_2 $-part shares some vertex $ y \in P $ with some edge $ e \in \mathcal{M}_1 $.
        \new{In our proof, we will consider the quantity $ w_x(\mathcal{M}_1) - w_x'(\mathcal{M}_1) $, which counts the number of conflicts (containing $ x $ in their $ \mathcal{H}_2 $-part) for which the $ \mathcal{H}_1 $-part is contained in $ \mathcal{M}_1 $, and the $ \mathcal{H}_2 $-part is at risk of being contained in $ \mathcal{M}_2 $.
        By bounding this quantity from above, we can ensure that all such conflicts can be avoided when choosing $ \mathcal{M}_2 $.}
		Observe also that both function definitions still make sense when $ j_1 = 0 $, in which case $ w_x $ is defined only on the empty set, and $ w_x' $ is defined on single edges.

        \subsubsection{Ignoring untestable sets}

        The goal of this section is to prove (\ref{eqn_bad_blockers_bounds}) below.
		For any \new{given} $ C \in E_1(\mathcal{G}) $, and any vertex $ y \in P $ not contained in an edge of $ C $, we write
		$ \gls{blockingedges} \coloneqq |\{e \in \mathcal{H}_1 : y \in e \text{ and } C \cup e \text{ is testable} \}|, $
		and claim that
		\begin{equation} \label{eqn_bad_blockers_bounds}
			(1 - d^{-\varepsilon / 3}) d \le b_{C, y} \le d.
		\end{equation}
  
		In other words, restricting to testable sets only removes a negligible proportion of the total number of edges $ e $ containing $ y $ for which $ w_x'(C \cup \{ e \}) $ \new{would otherwise be positive}.
        Indeed, we estimate $ b_{C, y} $ by bounding \new{from} above the number of $ e \in \mathcal{H}_1 $ such that $ y \in e $ but $ C \cup e $ is untestable.
		\new{\textbf{Firstly}}, for an $ \mathcal{H}_1 $-part $ C $ of size $ j_1 $, observe that $ C $ covers in total $ k j_1 $ vertices $ z $, and for each of these there are at most $ \Delta_2(\mathcal{H}_1) $ edges $ e \in \mathcal{H}_1 $ with $ y, z \in e $.
        Thus the total number of $ e \in \mathcal{H}_1 $ with $ y \in e $ for which $ C \cup e $ is not a matching is at most $ j_1 k d^{1 - \varepsilon} $ by (H\ref{cond_h_h1codegree}).
		\new{\textbf{Secondly}}, by (C\ref{cond_c3}), the number of such $ e $ for which $ C \cup e $ contains a conflict from $ \mathcal{C} $ is at most
		$$ \sum_{j = 2}^{\ell} \sum_{F \in {C \choose j - 1}} \Delta_{j - 1}(\mathcal{C}^{(j)}) \le \ell 2^{j_1} d^{1 - \varepsilon}, $$
        \new{where $ F $ represents a subset of $ C $ for which $ F \cup e $ is a conflict of size $ j $.\footnote{Recall here that $ C $ is testable by our assumption on $ \mathcal{D} $, so this is the only way in which $ C \cup \{ e\} $ may contain a conflict.}}
		\new{\textbf{Thirdly}}, \new{we bound the number of $ e $ for which $ C \cup e $ contains a conflict-sharing pair.\footnote{Note that $ e $ must be one element of this pair, by testability of $ C $.}}
        For each edge $ f \in \mathcal{H}_1 $ and $ j \in [\ell] $, let
		$ P_f^j \coloneqq \{ e \in \mathcal{H}_1: |(\mathcal{C}_e)^{(j)} \cap (\mathcal{C}_f)^{(j)}| \ge d^{j - \varepsilon / 2} \}. $
        Observe that by definition we may rewrite
        $$ \sum_{e \in P_f^j} |(\mathcal{C}_e)^{(j)} \cap (\mathcal{C}_f)^{(j)}| =
            \sum_{C' \in (\mathcal{C}_f)^{(j)}} \sum_{e \in P_f^j} \mathds{1}(C' \cup e \in \mathcal{C}^{(j + 1)}) \le
            \sum_{C' \in (\mathcal{C}_f)^{(j)}} |\{ e \in \mathcal{H}_1 : C' \cup e \in \mathcal{C}^{(j + 1)} \}|, $$
        and applying conditions (C\ref{cond_c2}) and (C\ref{cond_c3}), we obtain that
        $$ \sum_{C' \in (\mathcal{C}_f)^{(j)}} |\{ e \in \mathcal{H}_1 : C' \cup e \in \mathcal{C}^{(j + 1)} \}| \le
        \Delta(\mathcal{C}^{(j)}) \Delta_{j}(\mathcal{C}^{(j + 1)}) \le
        d^{j + 1 - \varepsilon}, $$
        \new{where the second expression is the product of the number of choices for $ C' $ given $ f $ and the number of choices for $ e $ given $ C' $.}
		Then, by the definition of $ P_f^j $, we see that
		\begin{equation} \label{eqn_bound_badpairs}
			|P_f^j| \le
			d^{\varepsilon / 2 - j} \sum_{e \in P_f^j} |(\mathcal{C}_e)^{(j)} \cap (\mathcal{C}_f)^{(j)}| \le
			d^{1 - \varepsilon / 2}.
		\end{equation} 
        Hence, summing over each $ f \in C $ and $ j \in [\ell] $, this means that in total there are at most $ j_1 \ell d^{1 - \varepsilon / 2} $ edges $ e \in \mathcal{H}_1 $ for which $ C \cup \{ e \} $ contains a conflict-sharing pair.
        We finish by subtracting the three bounds we have obtained from (H\ref{cond_h_h1degree}) to see that (\ref{eqn_bad_blockers_bounds}) holds.
		
		\subsubsection{Estimating values of test functions}
        \label{section_estimating_tf_values}

        \new{In this section we use \eqref{eqn_bad_blockers_bounds} to show that ignoring untestable sets in the definition of $ w_x' $ does not significantly affect the value of $ w_x'(\mathcal{H}_1) $, compared to what its value would be if they were included.}
        Specifically, we evaluate $ w_x(\mathcal{H}_1) $ and $ w_x'(\mathcal{H}_1) $ and show that $ w_x'(\mathcal{H}_1) \approx (j_2 - 1) d w_x(\mathcal{H}_1) $.
        \new{Recalling the definition of $ V_P(\cdot) $ from Section~\ref{section_general_conditions}}, begin by rewriting\COMMENT{Note that summing over all $ e \in \mathcal{H}_1 $ does not overcount, because if $ e \in C $ and $ y \in e $ then $ \mathcal{G}_{[C], x, y} $ is empty.}
		\begin{align*}
			w_x'(\mathcal{H}_1) &= \sum_{C \in {\mathcal{H}_1 \choose j_1}} \sum_{e \in \mathcal{H}_1 \new{\setminus C}} \mathds{1}(C \cup e \text{ testable}) \sum_{y \in (e \cap P) \setminus \{ x \}} A(\mathcal{G}_{[C], x, y})\\
                &= \sum_{C \in {\mathcal{H}_1 \choose j_1}} \sum_{e \in \mathcal{H}_1 \new{\setminus C}} \mathds{1}(C \cup e \text{ testable}) \sum_{y \in (e \cap P) \setminus \{ x \}} \sum_{D \in E_2(\mathcal{G}_{[C], x})} \mathds{1}(y \in V_P(D)) A(D) \\
				&= \sum_{C \in {\mathcal{H}_1 \choose j_1}} \sum_{D \in E_2(\mathcal{G}_{[C], x})} A(D) \sum_{y \in V_P(D) \setminus \{ x \}} |\{e \in \mathcal{H}_1 \new{\setminus C} : y \in e \text{ and } C \cup e \text{ testable} \}|\\
                &= \sum_{E \in \mathcal{G}_x} A(E) \sum_{y \in V_P(E) \setminus \{ x \}} b_{\new{E \cap \mathcal{H}_1},y}.
		\end{align*}
        \new{Note here that $ E = C \cup D $ and recall that $ \mathcal{G} \coloneqq \mathcal{D}^{(j_1, j_2)} $, so there is an obvious bijection between the set of pairs $ C, D $ in the sum and the set $ \mathcal{G}_x $.}
        Now observe that by definition
		$ w_x(\mathcal{H}_1) = A(\mathcal{G}_x) = \sum_{E \in \mathcal{G}_x} A(E) $.\COMMENT{$ w_x(\mathcal{H}_1) = \sum_{C \in {\mathcal{H}_1 \choose j_1}} A(\mathcal{G}_{[C], x}) = A(\mathcal{G}_x) $ since $ \mathcal{G}_x $ is the disjoint union of the $ \mathcal{G}_{[C], x} $.}
		Therefore, recalling that $ |V_P(E) \setminus \{ x \}| = j_2 - 1 $ and applying each of the two bounds in (\ref{eqn_bad_blockers_bounds}) to the inner sum above, we obtain that
		\begin{equation} \label{eqn_wx'_bounds}
			(1 - d^{-\varepsilon / 3}) (j_2 - 1) d w_x(\mathcal{H}_1) \le w_x'(\mathcal{H}_1) \le (j_2 - 1) d w_x(\mathcal{H}_1).
		\end{equation}
		
		We use these estimates in Section~\ref{section_bound_unblocked_conflicts} to ensure that the choice of $ \mathcal{M}_1 $ leaves very few unblocked potential conflicts, by showing that $ (j_2 - 1) w_x(\mathcal{M}_1) \approx w_x'(\mathcal{M}_1) $, so that most potential conflicts are in fact blocked; this will give an upper bound on the expected number of unblocked conflicts containing $ x $ which are present in $ \mathcal{M}_1 $.

		\subsubsection{Checking test function conditions}

        In order to use Theorem~\ref{theorem_cfhm_mod} to track the values of $ w_x $ and $ w_x' $, we must first scale them appropriately so that they take values in \new{the interval $ [0, \ell'] $.}
        In order to do this, define
		$ \gls{alphax} \coloneqq \max \{ \alpha_x', \alpha_x'' \}, $
		where
		$$ \alpha_x' \coloneqq \max_{j' \in [j_1]} d^{j' - j_1} \Delta^A_{j', 0}(\mathcal{G}_x),
		\text{ and }
		\alpha_x'' \coloneqq d^{-j_1} \max_{y \in P} A(\mathcal{G}_{x, y}), $$
        recalling the definitions in (\ref{eqn_unavoidability_defn}) and (\ref{eqn_unavoidability_degree_defn}).
		Note that
		$ \alpha_x \le d^{-\varepsilon \new{/3}} $
		by (E\ref{cond_mixed_codegree1}) and (E\ref{cond_mixed_codegree2}).
        We show now that either both of the functions $ \alpha_x^{-1} w_x $ and $ \alpha_x^{-1} w_x' $ are $ (d, \varepsilon / \new{10}, \mathcal{C}) $-trackable, or the function $ \alpha_x^{-1} w_x $ is $ (d, \varepsilon / \new{10}, \mathcal{C}) $-\new{semi-}trackable.
        \new{Observe that we need not consider the case $ \alpha_x = 0 $, since in this case $ w_x = w_x' = 0 $ are trivially trackable.}

        Firstly, the case $ j' = j_1 $ in $ \alpha_x' $ ensures that $ \alpha_x^{-1} w_x(C) \le 1 \new{\le \ell'} $ for all $ C \in {\mathcal{H}_1 \choose j_1} $.\COMMENT{For $ |C| = j_1 $ have $ A(\mathcal{G}_{[C], x}) \le \Delta^A_{j_1, 0}(\mathcal{G}_x) $ by definition of $ \Delta^A $, and $ \alpha_x $ is at least this by definition, so $ \alpha_x^{-1} w_x(C) \le \alpha_x^{-1} \alpha_x = 1 $.}
        Also $ \alpha_x^{-1} w_x'(C') \le \alpha_x^{-1} (j_1 + 1) p \Delta^A_{j_1, 0}(\mathcal{G}_x) \le (j_1 + 1) p \new{\le \ell'} $ by the definition of $ \alpha_x' $, recalling that $ \ell' \ll 1/\ell, 1/k $.
        Clearly both $ \alpha_x^{-1} w_x $ and $ \alpha_x^{-1} w_x' $ are zero on any set of edges which is not a matching\new{, recalling the assumption that all conflicts in $ \mathcal{G} $ are testable.}, so they are indeed both \new{$ \ell' $-}test functions \new{(which are $ j_1 $-uniform and $ (j_1 + 1) $-uniform, respectively)}.

        Observe that conditions (W\ref{cond_w3}) and (W\ref{cond_w4}) for $ \alpha_x^{-1} w_x $ are immediate from the assumption that no element of $ \mathcal{G} $ contains a conflict from $ \mathcal{C} $ or conflict-sharing pair, and they both hold for $ \alpha_x^{-1} w_x' $ by definition.
        We prove that both functions satisfy (W$\ref{cond_w2}^*$).
        Given $ j' \in [0, \ell] $ and $ E \in {\mathcal{H}_1 \choose j'} $, write
		$ \gls{edgesetextensions} \coloneqq \{ Z \in {\mathcal{H}_1 \choose j}: Z \supset E \} $
        for each $ j \in [j', \ell] $.
		Then for $ j' \in [j_1 - 1] $, we have $ \alpha_x^{-1} w_x(\mathcal{Z}_E^{(j_1)}) = \alpha_x^{-1} A(\mathcal{G}_{[E], x}) \le d^{j_1 - j'} $ by the definition of $ \alpha_x' $, so (W$\ref{cond_w2}^*$) holds for $ w_x $.

        For $ \alpha_x^{-1} w_x'$, let $ j' \in [j_1 + 1 - 1] = [j_1] $, consider a sum over $ C' = C \cup \{ e \} $ with $ E \subseteq C' $, and split into two cases depending on whether $ E \subseteq C $. Specifically, write
		\begin{equation} \label{eqn_wx'_w2split}
			w_x'(\mathcal{Z}_E^{(j_1 + 1)}) \le \sum_{C' \in \mathcal{Z}_E^{(j_1 + 1)}} \sum_{e \in C'} \sum_{y \in (e \cap P) \setminus \{ x \}} A(\mathcal{G}_{[C' \setminus \{ e \}], x, y}) \le S_1 + S_2,
		\end{equation}
		where \COMMENT{Note these sums include the case $ C' $ not a matching, or $ e \in C $; but as an upper bound they suffice.}
		$$ S_1 \coloneqq \sum_{C \in \mathcal{Z}_E^{(j_1)}} \sum_{e \in \mathcal{H}_1} \sum_{y \in (e \cap P) \setminus \{ x \}} A(\mathcal{G}_{[C], x, y}), $$
		and
		$$ S_2 \coloneqq \sum_{e \in E} \sum_{C \in \mathcal{Z}_{E \setminus \{ e \}}^{(j_1)}} \sum_{y \in (e \cap P) \setminus \{ x \}} A(\mathcal{G}_{[C], x, y}). $$
		
		Firstly for the case $ E \subseteq C $, we may rearrange sums to see that
        \begin{align*}
            S_1 &= \sum_{C \in \mathcal{Z}_E^{(j_1)}} \sum_{e \in \mathcal{H}_1} \sum_{y \in P \setminus \{ x \}} \sum_{D \in E_2(\mathcal{G}_{[C], x})} \mathds{1}(y \in V_P(D), y \in e) A(D)\\
                &= \sum_{C \in \mathcal{Z}_E^{(j_1)}} \sum_{D \in E_2(\mathcal{G}_{[C], x})} A(D)  \sum_{y \in V_P(D) \setminus \{ x \}} d_{\mathcal{H}_1}(y).
        \end{align*}
        We can bound this \new{from} above by
		\begin{equation} \label{eqn_wx'_w2s1}
		  S_1 \le (j_2 - 1) d \sum_{C \in \mathcal{Z}_E^{(j_1)}} \sum_{D \in E_2(\mathcal{G}_{[C], x})} A(D) = (j_2 - 1) d w_x(\mathcal{Z}_E^{(j_1)}) \le (j_2 - 1) \alpha_x d^{j_1 - j' + 1},
		\end{equation}
        using the fact that $ d_{\mathcal{H}_1}(y) \le d $ and $ |V_P(D) \setminus \{ x \}| = j_2 - 1 $, and then the definition of $ w_x $ and the condition (W$\ref{cond_w2}^*$) for $ w_x $.
        
		For the case that $ e \in E $, we in fact split into two subcases\new{, according to whether $ j' > 1 $.}
		If $ j' > 1 $, then we can rearrange and use (W$\ref{cond_w2}^*$) for $ w_x $ similarly to see that
		\begin{equation} \label{eqn_wx'_w2s2c1}
			S_2 \le \sum_{e \in E} \sum_{C \in \mathcal{Z}_{E \setminus \{ e \}}^{(j_1)}} p A(\mathcal{G}_{[C], x}) = p \sum_{e \in E} w_x(\mathcal{Z}_{E \setminus \{ e \}}^{(j_1)}) \le j' p \alpha_x d^{j_1 - j' + 1}.
		\end{equation}
		If however $ j' = 1 $, then $ E = \{ e \} $ and we are summing over all possible $ C \in {H_1 \choose j_1} $, which is why we need the definition of $ \alpha_x'' $ to give
		\begin{equation} \label{eqn_wx'_w2s2c2}
			S_2 = \sum_{y \in (e \cap P) \setminus \{ x \}} A(\mathcal{G}_{x, y}) \le p \alpha_x d^{j_1 \new{-j' + 1}}.
		\end{equation}
		\begin{sloppypar}
			\new{Plugging} the bounds (\ref{eqn_wx'_w2s1}), (\ref{eqn_wx'_w2s2c1}), and (\ref{eqn_wx'_w2s2c2}) back into (\ref{eqn_wx'_w2split}) gives
		  $ \alpha_x^{-1} w_x'(\mathcal{Z}_E^{(j_1 + 1)}) \le C_{\ell, k} d^{j_1 + 1 - j'} $
		  for some constant $ C_{\ell, k} $ depending only on $ \ell $ and $ k $, as required for (W$\ref{cond_w2}^*$).
		\end{sloppypar}
        
        Hence if $ \alpha_x^{-1} w_x(\mathcal{H}_1) \le d^{j_1 + \varepsilon / \new{5}} $, then $ \alpha_x^{-1} w_x $ is $ (d, \varepsilon / \new{10}, \mathcal{C}) $-\new{semi-}trackable.
        If instead $ \alpha_x^{-1} w_x(\mathcal{H}_1) \ge d^{j_1 + \varepsilon / \new{5}} $, then by the estimate (\ref{eqn_wx'_bounds}) we have $ \alpha_x^{-1} w_x'(\mathcal{H}_1) \ge d^{j_1 + 1 + \varepsilon / \new{10}} $ so (W\ref{cond_w1}) holds for both functions.
        In this case, the condition (W$\ref{cond_w2*}^*$) implies (W\ref{cond_w2}), so they are both $ (d, \varepsilon / \new{10}, \mathcal{C}) $-trackable.
	
	\subsection{Tracking mixed conflicts: the case $ j_2 = 1 $}
	\label{section_matching_proof_j21}

		In this section, we suppose $ j_2 = 1 $, noting that $ j_1 \ge 1 $ by (E\ref{cond_mixed_general_conflictsize}), and define a further set of test functions; these are used in Section~\ref{section_restrict_safe} to show that such conflicts can be avoided.
        We fix $ x \in P $ for the remainder of this section, and for ease of notation we write $ \gls{nhoodxh2} \coloneqq \{ e \in \mathcal{H}_2 : x \in e \} $ and $ \gls{degreexh2} \coloneqq d_{\mathcal{H}_2}(x) = |N_x| $.
        Furthermore, in this section, we define $ \mathcal{G} \coloneqq \cup_{j_1 \in [\ell]} \mathcal{D}^{(j_1, 1)} $, \new{noting that we now consider only $ j_2 = 1 $, but for all values of $ j_1 $ together.}
				
		\subsubsection{Defining test functions} \label{section_j21_testdef}

        \new{Recall that we fixed a constant $ i^* \in \mathbb{N} $ with $ 1/i^* \ll 1/\ell $.
        Let $ m \in [i^*] $, $ s' \in [m \ell] $, $ s \in [s'] $, and consider any collection $ \overline{C} \coloneqq \{ C_t \}_{t \in [m]} $ of $ m $ edge sets $ C_t \subseteq \mathcal{H}_1 $, each with size $ |C_t| \in [\ell] $, such that $ \sum_{t \in [m]} |C_t| = s' $ but $ |\bigcup_{t \in [m]} C_t | = s $.
        Let $ \gls{intersectionsencoding} \coloneqq (|\bigcap_{t \in T} C_t|)_{T \subseteq [m]} $ encode the sizes of the $ C_t $ and their intersections, and let \gls{allencodings} be the set of all possible values of $ \mathcal{P}(\overline{C}) $.
        We consider $ \overline{C} $ to be unordered, and identify elements in $ \mathcal{P}^*_{m, s, s'} $ which arise from different orderings of the same $ \overline{C} $, thus ensuring that $ \mathcal{P}(\overline{C}) $ is still well-defined.
        Given $ \mathcal{P} \in \mathcal{P}^*_{m, s, s'} $ and a set $ E \subseteq \mathcal{H}_1 $ of size $ |E| = s $, write $C(E, \mathcal{P})$ for the set of all such collections $ \overline{C} $ with $ \bigcup_{t \in [m]} C_t = E $ and $ \mathcal{P}(\overline{C}) = \mathcal{P} $.
        We may now define a function $ \gls{testfnwxbar} \colon\ {\mathcal{H}_1 \choose s} \to \mathbb{R}_{\ge 0} $ by
		$$ \overline{w}_x^{m, s, s'}(E) \coloneqq d_x^{-1} \sum_{\mathcal{P} \in \mathcal{P}^*_{m, s, s'}} \sum_{\overline{C} \in C(E, \mathcal{P})} \sum_{e \in N_x} \prod_{t = 1}^{m} \mathds{1}(C_t \cup e \in \mathcal{G}). $$
        Given a matching $ \mathcal{M}_1 \subseteq \mathcal{H}_1 $ containing $ E $ (but not covering $ x $), if we choose $ e \in N_x \subseteq \mathcal{H}_2 $ randomly, then the function $ \overline{w}_x^{m, s, s'}(E) $ represents the expected number of collections $ \overline{C} \in \bigcup_{\mathcal{P} \in \mathcal{P}^*_{m, s, s'}} C(E, \mathcal{P}) $ for which $ C_t \cup e $ is a conflict from $ \mathcal{G} $ for every $ t \in [m] $.
        As such $ \overline{w}_x^{m, s, s'}(\mathcal{M}_1) $ represents the expected number of $ m $-sets of $ \mathcal{H}_1 $-parts $ C_1, \ldots, C_m $ present in $ \mathcal{M}_1 $ such that $ \sum_{t \in [m]} |C_t| = s' $ but $ |\bigcup_{t \in [m]} C_t| = s $ and each part individually forms a conflict with the randomly chosen $ \mathcal{H}_2 $-edge $ e \in N_x $.
        We are interested in the value of $ \overline{w}_x^{m, s, s'}(\mathcal{M}_1) $, but the function $ \overline{w}_x^{m, s, s'} $ itself may not technically be trackable, so we define further
		$$ \gls{testfnwxmss}(E) \coloneqq \mathds{1}(E \text{ is testable}) \overline{w}_x^{m, s, s'}(E), $$
        which approximates $ \overline{w}_x^{m, s, s'} $ and is trackable, as we will show.}
        
        \subsubsection{Checking trackability}

        We again define a normalising constant $ \beta_x $, analogously to $ \alpha_x $, by
		$ \gls{betax} \coloneqq \max \{ \beta_x', \beta_x'' \}, $
		where
		$$ \beta_x' \coloneqq \max_{j_1 \in [\ell]} \max_{j' \in [j_1 - 1]} d^{j' - j_1} \Delta_{j', 1}(\mathcal{G}_x^{(j_1, 1)}),
		\text{ and }
		\beta_x'' \coloneqq d_x^{-1} \max_{j_1 \in [\ell]} \Delta_{j_1, 0}(\mathcal{G}_x^{(j_1, 1)}). $$
        Note again that $ \beta_x' \le d^{-\varepsilon \new{/3}} $ by (E\ref{cond_mixed_general_codegree_j21}).
        Likewise, observe that $ A(E) = d_x^{-1} $ for any $ E \in \mathcal{G}_x^{(j_1, 1)} $, so $ \Delta^A_{j_1, 0}(\mathcal{G}_x^{(j_1, 1)}) = d_x^{-1} \Delta_{j_1, 0}(\mathcal{G}_x^{(j_1, 1)})$, from which it follows that $ \beta_x'' \le d^{-\varepsilon \new{/3}} $ by the $ j' = j_1 $ case of (E\ref{cond_mixed_general_codegree1}).
        \new{We now fix $ m, s, s' $ and aim to prove that the function $\beta_x^{-1} w_x^{m, s, s'} $ is either $ (d, \varepsilon / \new{10}, \mathcal{C}) $-trackable or $ (d, \varepsilon / \new{10}, \mathcal{C}) $-\new{semi-}trackable.
        By the definition of $ \beta_x'' $, we have that 
        $$
            \overline{w}_x^{m, s, s'}(E) \le d_x^{-1} \sum_{\mathcal{P} \in \mathcal{P}^*_{m, s, s'}} \sum_{\overline{C} \in C(E, \mathcal{P})} d_{\mathcal{G}_x^{(|C_1|, 1)}}(C_1)
            \le d_x^{-1} \Gamma^2 d_x \beta_x''
            \le \beta_x \ell',
        $$
        recalling that $ 1/\ell' \ll 1/ \Gamma \ll 1/i^* $ and thus noting that $ |\mathcal{P}^*_{m, s, s'}| + |C(E, \mathcal{P})| \le \Gamma $.
        We hence see that $ \beta_x^{-1} w_x^{m, s, s'} \le \beta_x^{-1} \overline{w}_x^{m, s, s'} $ is an \new{$ \ell' $-}test function, and observe also that \new{$ \beta_x^{-1} w_x^{m, s, s'} $} satisfies (W\ref{cond_w3}) and (W\ref{cond_w4}) by the definition of testability.}

        \new{Next, we verify that $ \beta_x^{-1} \overline{w}_x^{m, s, s'} $ satisfies (W$\ref{cond_w2}^*$).
        To this end, let $ F \subseteq \mathcal{H}_1 $ be an edge set of size $ j' \in [s - 1] $, and recall that $ \mathcal{Z}_F^{(s)} = \{ Z \in {\mathcal{H}_1 \choose s}: Z \supset F \}$.
        Given $ \mathcal{P} \in \mathcal{P}^*_{m, s, s'} $, let $\mathcal{F}(F, \mathcal{P})$ be the set of all collections $ \overline{F} \coloneqq \{ F_t \}_{t \in [m]} $ of $ m $ subsets $ F_t \subseteq F $ such that $ \bigcup_{t \in [m]} F_t = F $ and there exists some collection $ \overline{C} \subseteq 2^{\mathcal{H}_1} $ with $ \mathcal{P}(\overline{C}) = \mathcal{P} $ and $ C_t \cap F = F_t $ for every $ t \in [m] $.
        Note that any collection $ \overline{C} $ with $ F \subseteq \bigcup \overline{C} $ uniquely determines some such $ \overline{F} \in \mathcal{F}(F, \mathcal{P}(\overline{C})) $.
        Given $ \overline{F} \in \mathcal{F}(F, \mathcal{P}) $ and $ e \in N_x $, we aim to bound $ q(\overline{F}, \mathcal{P}) \coloneqq \sum_{e \in N_x} q(\overline{F}, \mathcal{P}, e) $, where we define $ q(\overline{F}, \mathcal{P}, e) $ as the number of collections $ \overline{C} = \{ C_t \}_{t \in [m]} $ with $ \mathcal{P}(\overline{C}) = \mathcal{P} $ for which $ F_t = C_t \cap F $ and $ C_t \cup e \in \mathcal{G} $ for every $ t \in [m] $.}
        
        \new{Suppose we have already chosen $ C_1, \ldots, C_{t-1} $ such that their intersections are compatible with $ \mathcal{P} $, and for every $ t' \in [t-1] $ we have $ F_{t'} \subseteq C_{t'} $ and $ C_{t'} \cup e \in \mathcal{G} $.
        Write $ C_t' \coloneqq C_1 \cup \cdots \cup C_{t-1} \cup F $ and let $ a_t $ and $ a_t' $ denote the desired sizes of $ C_t \cap C_t' $ and $ C_t \setminus C_t' $, respectively, noting that these are uniquely determined by $ \mathcal{P} $ and $ \overline{F} $.
        If $ a_t, a_t' > 0 $, then to choose $ C_t $ given $ e $, we have at most $ 2^s $ choices for $ C_t \cap C_t' $, each yielding at most $ \Delta_{a_t, 1}(\mathcal{G}^{(a_t + a_t', 1)}) $ choices for $ C_t \setminus C_t' $, so by the definition of $ \beta_x' $ we obtain at most $ 2^s \beta_x' d^{a_t'} $ total choices for $ C_t $.
        If $ a_t = 0 $, then the number of choices for $ C_t $ given $ e $ is at most $ \ell d^{a_t'} $ by (E\ref{cond_mixed_general_degree_j21}).
        If $ a_t' = 0 $, then the number of choices for $ C_t $ is at most $ 2^s $, and this is compatible with most $ \Delta_{a_t, 0}(\mathcal{G}^{(a_t, 1)}) \le \beta_x'' d_x $ choices for $ e $, by the definition of $ \beta_x'' $.
        Note that by definition it is always the case that $ \sum_{t \in [m]} a_t' = s - j' $, and without loss of generality we may assume that $ a_1 \ne 0 $, so that the number of choices for $ (C_1, e) $ is at most $ 2^s \beta_x d^{a_1'} d_x $ in all cases.
        As such we obtain that $ q(\overline{F}, \mathcal{P}) \le \Gamma \beta_x d^{s - j'} d_x $, recalling that $ 1/\Gamma \ll 1/i^* $.
        Observe also that $ |\mathcal{P}^*_{m, s, s'}| + |\mathcal{F}(F, \mathcal{P})| \le \Gamma $.
        We may thus bound
        $$ \overline{w}_x^{m, s, s'}(\mathcal{Z}_F^{(s)}) = d_x^{-1} \sum_{\mathcal{P} \in \mathcal{P}^*_{m, s, s'}} \sum_{\overline{F} \in \mathcal{F}(F, \mathcal{P})} q(\overline{F}, \mathcal{P})
        \le \Gamma^3 \beta_x d^{s - j'}. $$
        Therefore $ \beta_x^{-1} w_x^{m, s, s'}(\mathcal{Z}_F^{(s)}) \le \beta_x^{-1} \overline{w}_x^{m, s, s'}(\mathcal{Z}_F^{(s)}) \le \varepsilon^{-1} d^{s - j'} $, which implies that $ \beta_x^{-1} w_x^{m, s, s'} $ satisfies (W$\ref{cond_w2}^*$).
        Note that it is always the case that at least one of (W\ref{cond_w1}) and (W$\ref{cond_w1*}^*$) holds, so $ \beta_x^{-1} w_x^{m, s, s'} $ is either $ (d, \varepsilon / \new{10}, \mathcal{C}) $-trackable or $ (d, \varepsilon / \new{10}, \mathcal{C}) $-\new{semi-}trackable.}

		\subsubsection{Ignoring untestable sets}

        In this subsection, we again fix $ m, s, s' $, and claim that
        \begin{equation} \label{eqn_wx_wxbar_est}
            \text{if } \quad \overline{w}_x^{m, s, s'}(\mathcal{H}_1) \ge d^{s - \varepsilon^3}, \quad \text{ then } \quad  (1 - d^{-\varepsilon / \new{5}}) \overline{w}_x^{m, s, s'}(\mathcal{H}_1) \le w_x^{m, s, s'}(\mathcal{H}_1) \le \overline{w}_x^{m, s, s'}(\mathcal{H}_1). 
        \end{equation}
        Note that in particular, in this case, we have that $ \beta_x^{-1} w_x^{m, s, s'} $ satisfies (W\ref{cond_w1}), so is trackable.
  
		Indeed, the upper bound is trivial, so we proceed to prove the lower bound.
        Start by writing $ w_x^{m, s, s'}(E) \ge \overline{w}_x^{m, s, s'}(E) - f(E) - g(E) - h(E) $ where 
		\begin{align*} f(E) &\coloneqq \mathds{1}(E \text{ not a matching}) \overline{w}_x^{m, s, s'}(E), \\
		g(E) &\coloneqq \mathds{1}(E \text{ contains a conflict in } \mathcal{C}) \overline{w}_x^{m, s, s'}(E), \\
		 h(E) &\coloneqq \mathds{1}(E \text{ contains a conflict-sharing pair}) \overline{w}_x^{m, s, s'}(E). \end{align*}

        \new{We first bound $ f(\mathcal{H}_1) $ from above by considering the number of choices of $ \overline{C} = \{ C_t \}_{t \in [m]} $ for which $ E = \bigcup_{t \in [m]} C_t $ is not a matching.
        Given $ \mathcal{P} \in \mathcal{P}^*_{m, s, s'} $ and $ e \in N_x $, we aim to bound $ q(\mathcal{P}, e) $, which we define as the number of collections $ \overline{C} = \{ C_t \}_{t \in [m]} $ for which $ C_t \cup e \in \mathcal{G} $ for every $ t \in [m] $, $ \mathcal{P}(\overline{C}) = \mathcal{P} $, and there exist $ e_1 \in C_1, e_2 \in C_2 $ with some vertex $ v \in e_1 \cap e_2 $; note that this case suffices without loss of generality, since we regard $ \overline{C} $ as unordered.}

        \new{Let $ a_1'$ and $ a_2' $ be the sizes of $ C_1$ and $ C_2 \setminus C_1 $, respectively, which are prescribed by $ \mathcal{P} $.
        There are at most $ \ell d^{a_1'} $ choices for $ C_1 $ by (E\ref{cond_mixed_general_degree_j21}), then at most $ a_1' (p+q) $ choices for $ v \in \bigcup C_1 $, $ d $ choices for $ e_2 \in \mathcal{H}_1 $ with $ v \in e_2 $, and $ 2^{a_1'} \beta_x' d^{a_2' - 1} $ choices for $ C_2 \subseteq \mathcal{H}_1 $ with $ e_2 \in C_2 $ and $ C_2 \cup e \in \mathcal{G} $.\footnote{Note that $ e_2 \not \in C_1 $, since we assumed that all conflicts in $ \mathcal{D} $ are matchings.}}
        
        \new{As in the previous section, suppose that $ 3 \le t \le m $ and we have already chosen $ C_1, \ldots, C_{t-1} $ such that their intersections are compatible with $ \mathcal{P} $.
        Similarly, let $ a_t $ and $ a_t' $ be the desired sizes of $ C_t \cap (C_1 \cup \cdots \cup C_{t-1}) $ and $ C_t \setminus (C_1 \cup \cdots \cup C_{t-1}) $, respectively, as determined by $ \mathcal{P} $.
        As before, we see that the number of choices for $ C_t $ is at most $ 2^s \Delta_{a_t, 1}(\mathcal{G}^{(a_t + a_t', 1)}) \le 2^s \ell d^{a_t'} $, in all cases, by (E\ref{cond_mixed_general_degree_j21}) and (E\ref{cond_mixed_general_codegree_j21}).
        Hence, noting that $ \sum_{t \in [m]} a_t' = s $, we obtain that $ q(\mathcal{P}, e) \le \Gamma \beta_x' d^s $.
        This in particular means that
        $$ f(\mathcal{H}_1) = d_x^{-1} \sum_{\mathcal{P} \in \mathcal{P}^*_{m, s, s'}} \sum_{e \in N_x} q(\mathcal{P}, e)
        \le \Gamma^2 \beta_x' d^{s}. $$
        Therefore, since $ \beta_x \le d^{-\varepsilon \new{/3}} $, and by the assumption that $ \overline{w}_x^{m, s, s'}(\mathcal{H}_1) \ge d^{s - \varepsilon^3} $, we obtain that
        $$ f(\mathcal{H}_1) \le d^{- \varepsilon / \new{4}} \overline{w}_x^{m, s, s'}(\mathcal{H}_1). $$}

        By the very same argument, but using instead the degree conditions on $ \mathcal{C} $ and the bound from (\ref{eqn_bound_badpairs}) on the number of $ \varepsilon / 2 $-conflict-sharing pairs $ \{ e, f \} $ given any fixed edge $ e \in \mathcal{H}_1 $, respectively, we obtain that
		$ g(\mathcal{H}_1) \le d^{-\varepsilon / 2} \overline{w}_x^{m, s, s'}(\mathcal{H}_1) $
		and
		$ h(\mathcal{H}_1) \le d^{-\varepsilon / 3} \overline{w}_x^{m, s, s'}(\mathcal{H}_1). $
		Hence the lower bound in (\ref{eqn_wx_wxbar_est}) follows by subtracting these three bounds.

        \subsubsection{Useful estimates}

        \new{We finish this section by computing two bounds which we will need in Section~\ref{section_restrict_safe}.
        Firstly, we bound the order of magnitude of our test functions, specifically
        \begin{equation} \label{eqn_j21_wx_upperbound}
            \overline{w}_x^{m, s, s'}(\mathcal{H}_1) \le \Gamma^2 d^{s}.
        \end{equation}
        Indeed, we use a similar counting argument to the previous sections.
        Fix $ \mathcal{P} \in \mathcal{P}^*_{m, s, s'} $ and $ e \in N_x $, and attempt to count the number of collections $ \overline{C} = \{ C_t \}_{t \in [m]} $ with $ \mathcal{P}(\overline{C}) = \mathcal{P} $ and $ C_t \cup e \in \mathcal{G} $ for every $ t \in [m] $.
        For each $ t \in [m] $, let $ a_t $ and $ a_t' $ be the sizes of $ C_t \cap (C_1 \cup \cdots \cup C_{t-1}) $ and $ C_t \setminus (C_1 \cup \cdots \cup C_{t-1}) $, respectively, as prescribed by $ \mathcal{P} $.
        Suppose now that we have chosen sets $ C_1, \ldots, C_{t-1} $ compatible with $ \mathcal{P} $.
        If $ a_t = 0 $ then we have at most $ \ell d^{a_t'} $ choices for $ C_t $ by (E\ref{cond_mixed_general_degree_j21}).
        If $ a_t' = 0 $ then we have at most $ 2^s $ choices for $ C_t $.
        Otherwise we have at most $ 2^s d^{a_t' - \varepsilon \new{/3}} $ choices for $ C_t $ by (E\ref{cond_mixed_general_codegree_j21}).
        Since $ \sum_{t \in [m]} a_t' = s $ by definition, we obtain in total at most $ \Gamma d^s $ choices for the collection $ \overline{C} $.
        Thus we may bound
        $$ \overline{w}_x^{m, s, s'}(\mathcal{H}_1) \le d_x^{-1} \sum_{\mathcal{P} \in \mathcal{P}^*_{m, s, s'}} \sum_{e \in N_x} \Gamma d^{s} \le \Gamma^2 d^s, $$
        as required, recalling that $ 1/\Gamma \ll 1/i^* $.}
        
        \new{Secondly, given $ m \in [i^*] $ and $ s \in [m \ell - 1] $, we claim further that
        \begin{equation} \label{eqn_j21_wx_nondisjbound}
            \hat{w}_x^{m, s}(E) \coloneqq \sum_{s' = s + 1}^{m \ell} \overline{w}_x^{m, s, s'}(E) \le d^{s - \varepsilon / \new{4}}.
        \end{equation}
        Indeed, consider following the argument above when $ s < s' $, in which case there must exist some $ t \in [m] $ with $ a_t \ne 0 \ne a_t' $.
        We therefore obtain the improved bound
        $$ \overline{w}_x^{m, s, s'}(\mathcal{H}_1) \le \Gamma^2 d^{s - \varepsilon \new{/3}}, $$
        from which the desired result follows.}

	\subsection{Constructing the matching}
	\label{section_proof_construction}
	
		We are now ready to construct the matching in two stages, as described previously.
		
		\subsubsection{Obtaining $ \mathcal{M}_1 $}
        
		We start by applying Theorem~\ref{theorem_cfhm_mod} to the hypergraph $ \mathcal{H}_1' $ defined in Section~\ref{section_proof_regularising}.
		Since $ \mathcal{H}_1' $ may have many more vertices than $ \mathcal{H}_1 $, we need to ensure that only are a small proportion of the vertices \new{in $ P $} are left uncovered.
		To do this, simply define another 1-uniform test function \new{(for $ \mathcal{H}_1'$)}
		$$ \gls{testfnw1}(e) \coloneqq \mathds{1}(e \in \mathcal{H}_1).$$
        For any matching $ \mathcal{N} \subseteq \mathcal{H}_1' $, the value $ p w_1(\mathcal{N}) $ is equal to the number of vertices of $ P $ covered by $ \mathcal{N} $.
		Note that $ w_1 $ is a $ (d, \varepsilon / \new{10}, \mathcal{C}) $-trackable \new{$ \ell' $-}test function since \new{(H\ref{cond_h_h1degree}) and the fact that $ |P| \ge d^{\varepsilon} $ (recalling (S\ref{cond:setup_Psize}))} imply that $w_1(\mathcal{H}_1)=|\mathcal{H}_1|>|P|d/2p> d^{1+\varepsilon/2}$, and (W\ref{cond_w2})--(W\ref{cond_w4}) hold trivially.
		Observe also that for every other test function $ w $ that we have defined previously on subsets of $ \mathcal{H}_1 $, we may extend $ w $ to subsets of $ \mathcal{H}_1' $ by setting $ w(E) = 0 $ for all $ E $ with $ E \not \subseteq \mathcal{H}_1 $, without affecting whether $ w $ is trackable or \new{semi-}trackable.
		
		We may therefore apply Theorem~\ref{theorem_cfhm_mod} to the hypergraph $ \mathcal{H}_1' $, the set of all test functions we have defined, and the conflict hypergraph $ \mathcal{C} $, with $ \ell' $ in place of $ \ell $, to obtain a matching $ \mathcal{M}_1' \subseteq \mathcal{H}_1' $.
		\new{Since all of our test functions are zero on $ \mathcal{H}_1' \setminus \mathcal{H}_1 $,} this induces a $ \mathcal{C} $-free matching $ \mathcal{M}_1 \subseteq \mathcal{H}_1 $ such that, for each of the $ j $-uniform test functions $ w $ which we have defined, we have $ w(\mathcal{M}_1) = (1 \pm d^{-\varepsilon^3}) d^{-j} w(\mathcal{H}_1) $ if $ w $ is trackable, and $ w(\mathcal{M}_1) \le d^{\varepsilon / \new{4}} $ if $ w $ is \new{semi-}trackable.
		In particular, applied to $ w_1 $, this means that $ \new{|\mathcal{M}_1| = |\mathcal{M}_1' \cap \mathcal{H}_1|} \ge (1 - d^{-\varepsilon^3}) d^{-1} |\mathcal{H}_1| \ge (1 - d^{-\varepsilon^3}) (1 - d^{-\varepsilon}) |P| / p $ by the degree condition (H\ref{cond_h_h1degree}).
        Hence at most $ \new{(1 - (1 - d^{-\varepsilon^3}) (1 - d^{-\varepsilon}))|P| \le}  d^{-\varepsilon^3 / 2} |P| $ vertices of $ P $ are left uncovered by $ \mathcal{M}_1 $.
		Write this set as
		$ P' \coloneqq \{ x_1, \ldots, x_M \} $
		for $ M \le d^{-\varepsilon^3}|P| $.

		\subsubsection{Restricting to safe edges}
        \label{section_restrict_safe}

		As discussed, in order to avoid conflicts in the case $ j_2 = 1 $, we restrict to a smaller set of safe edges for each vertex in $ P' $ when choosing $ \mathcal{M}_2 $.
        \new{Recalling that $ N_x = \{ e \in \mathcal{H}_2 : x \in e \} $ and $ d_x = |N_x| $}, we show now that for each $ x \in P' $, there exists a subset $ \gls{safeedges} \subseteq N_x $ of safe edges such that if $ C \cup e \in \mathcal{D} $ is a conflict with $ C \in {\mathcal{M}_1 \choose j_1} $ for some $ j_1 \in [\ell] $ and $ e \in N_x $, then $ e \not \in N_x^s $.

        \new{We use the inclusion-exclusion principle to bound the number of safe edges from below, showing that we can choose $ N_x^s $ of positive density; that is, we ensure that $ |N_x^s| \ge \lambda d_x $, for some constant $ \lambda > 0 $ to be specified, so that restricting to $ N_x^s $ does not significantly limit our choice of edges for $ \mathcal{M}_2 $.
        To do this, we need to estimate the number of $ m $-sets $ \{ C_1, \ldots, C_{m} \} $ such that each $ C_{t} \subseteq \mathcal{M}_1 $ forms a conflict with the same edge $ e $.
        This is possible because $ \mathcal{M}_1 $ is sufficiently pseudorandom, in an appropriate sense, which we show using the test functions $ w_x^{m, s, s'} $, defined in Section~\ref{section_matching_proof_j21}.}
        For the remainder of this section, we again write $ \mathcal{G} = \bigcup_{j \in [\ell]} \mathcal{D}^{(j, 1)} $ for \new{simplicity, recalling our notation from Section~\ref{section:simply_bounded}.}
		Fix $ x \in P' $, and recall that $ i^* \in \mathbb{N} $ satisfies $ 1/i^* \ll 1 / \ell $.
		
        For each $ C \subseteq \mathcal{H}_1 $, start by defining
		$ B_C \coloneqq \{ e \in N_x : C \cup e \in \mathcal{G} \} $
		to be the set of edges in $ \mathcal{H}_2 $ containing $ x $ which complete a conflict with $ C $, and for each $ m \in [i^*] $, \new{recalling the definition of $ E_i(\cdot) $ from Section~\ref{section_proof_conflicts}}, set
		$$ a_m \coloneqq \sum_{e \in N_x} {|E_1(\mathcal{G}_{[\emptyset, e]}) \cap 2^{\mathcal{M}_1}| \choose m} $$
		as the sum over all $ e \in N_x $ of the numbers of $ m $-sets of partial conflicts (of any sizes) in $ \mathcal{M}_1 $, all of which form a conflict with $ e $.
		Define
		$ N_x^s \coloneqq \{ e \in N_x : E_1(\mathcal{G}_{[\emptyset, e]}) \cap 2^{\mathcal{M}_1} = \emptyset \} $
		to be the set of safe edges.
		We may assume that $ i^* $ is chosen to be odd, so by the inclusion-exclusion principle, we obtain that
		\begin{equation} \label{eqn_j21_ex_lb}
			|N_x^s| = \left| N_x \setminus \bigcup_{C \subseteq \mathcal{M}_1} B_C \right| \ge d_x - a_1 + a_2 - a_3 + \cdots - a_{i^*}.
		\end{equation}
		We now aim to show that this is suitably close to an exponential series, which guarantees that a constant proportion of the edges of $ N_x $ belong to $ N_x^s $.

		\new{Fix $ m \in [i^*] $ then, considering all of the possible sizes and intersections of a set of $ m $ conflicts, we see by the definition of $ w_x^{m, s, s'} $ in Section~\ref{section_j21_testdef} that
        \begin{equation} \label{eqn_am_wxi}
            a_m = d_x \sum_{s' \in [m \ell]} \sum_{s \in [s']} w_x^{m, s, s'}(\mathcal{M}_1).
        \end{equation}}
        We first claim that
        \begin{equation} \label{eqn_j21_wx_h1_m1}
            w_x^{m, s, s'}(\mathcal{M}_1) = (1 \pm d^{-\varepsilon^3 / 2}) d^{-s} \overline{w}_x^{m, s, s'}(\mathcal{H}_1) \pm d^{-\varepsilon^3 / 2}
        \end{equation}
        for every $ s' \in [m \ell] $ and $ s \in [s'] $.
        Indeed, we must consider three cases.
        Firstly, if $ \beta_x^{-1} w_x^{m, s, s'} $ is \new{semi-}trackable, then Theorem~\ref{theorem_cfhm_mod} tells us that $ w_x^{m, s, s'}(\mathcal{M}_1) \le \beta_x d^{\varepsilon / \new{4}} \le d^{-\varepsilon / \new{12}} $, \new{recalling that $ \beta_x \le d^{-\varepsilon / 3} $}.
        Secondly, if $ \beta_x^{-1} w_x^{m, s, s'} $ is trackable but $ \overline{w}_x^{m, s, s'}(\mathcal{H}_1) < d^{s - \varepsilon^3} $, then we obtain that $ w_x^{m, s, s'}(\mathcal{M}_1) = (1 \pm d^{-\varepsilon^3}) d^{-s} w_x^{m, s, s'}(\mathcal{H}_1) \le 2 d^{-s} \overline{w}_x^{m, s, s'}(\mathcal{H}_1) \le 2 d^{- \varepsilon^3} $.
        Since $ d^{-s} \overline{w}_x^{m, s, s'}(\mathcal{H}_1) \le d^{-\varepsilon^3} $ in both of these cases, it is certainly true that $ w_x^{m, s, s'}(\mathcal{M}_1) = d^{-s} \overline{w}_x^{m, s, s'}(\mathcal{H}_1) \pm  d^{-\varepsilon^3 / 2} $.
        Finally, if $ \beta_x^{-1} w_x^{m, s, s'} $ is trackable and $ \overline{w}_x^{m, s, s'}(\mathcal{H}_1) \ge d^{s - \varepsilon^3} $, then the estimate from Theorem~\ref{theorem_cfhm_mod}, combined with (\ref{eqn_wx_wxbar_est}), gives us that
        $ w_x^{m, s, s'}(\mathcal{M}_1) = (1 \pm d^{-\varepsilon^3 / 2}) d^{-s} \overline{w}_x^{m, s, s'}(\mathcal{H}_1) $,
        so \eqref{eqn_j21_wx_h1_m1} holds in all cases.
        \new{Plugging \eqref{eqn_j21_wx_h1_m1} into \eqref{eqn_am_wxi} and recalling \eqref{eqn_j21_wx_nondisjbound}, we obtain that
        \begin{align*}
            a_m &= (1 \pm d^{-\varepsilon^3 / 2}) d_x \sum_{s' \in [m \ell]} \sum_{s \in [s']} d^{-s} \overline{w}_x^{m, s, s'}(\mathcal{H}_1) \pm d^{-\varepsilon^3 / 2} d_x\\
                &= (1 \pm d^{-\varepsilon^3 / 2}) d_x \sum_{s' \in [m \ell]} d^{-s'} \overline{w}_x^{m, s', s'}(\mathcal{H}_1) \pm d^{-\varepsilon^3 / 3} d_x. \stepcounter{equation}\tag{\theequation}\label{eqn_j21_am_wsprimes}
        \end{align*}}
        
		In order to approximate this value, for each $ e \in N_x $ and $ j \in [\ell] $, define
		$ \gamma_{e, j} \coloneqq d^{-j} |\mathcal{G}_{[\emptyset, e]}^{(j, 1)}| \le \ell $
		(by (E\ref{cond_mixed_general_degree_j21})).
        Then set
		$ \gamma_e \coloneqq \sum_{j \in \ell} \gamma_{e, j} \le \ell^2, $
		\new{as an estimate for} the total number of partial conflicts completed by $ e $, of any size, which we expect to appear in $ \mathcal{M}_1 $.
        \new{For each $ s' \in [m \ell] $, write $ \mathcal{Q}_{m, s'} $ for the set of sequences $ \mathbf{b} = (b_j)_{j \in [\ell]} $ with $ \sum_{j \in [\ell]} b_j = m $ and $ \sum_{j \in [\ell]} j b_j = s' $.
		Considering the number of ways to choose $ m $ conflicts containing $ e $ with the sum of sizes of $ \mathcal{H}_1 $-parts equal to $ s' $, we see that we may write
        \begin{equation} \label{eqn_wxibar_gamma}
            \sum_{s \in [s']} d_x \overline{w}_x^{m, s, s'}(\mathcal{H}_1)
            = \sum_{\mathbf{b} \in \mathcal{Q}_{m, s'}} \sum_{e \in N_x} \prod_{j \in [\ell]} {|\mathcal{G}_{[\emptyset, e]}^{(j, 1)}| \choose b_j}
            = d^{s'} \sum_{e \in N_x} \sum_{\mathbf{b} \in \mathcal{Q}_{m, s'}} \prod_{j \in [\ell]} \frac{\gamma_{e, j}^{b_j}}{b_j!} \pm d^{s'-1/2} d_x,
        \end{equation}
        where the error term is obtained using the fact that $ {n \choose k} = n^{k} / k! \pm f(n) $ for a polynomial $ f $ of degree at most $ k - 1 $, as well as the fact that $ |\mathcal{G}_{[\emptyset, e]}^{(j, 1)}| \le \ell d^j $ by (E\ref{cond_mixed_general_degree_j21}).
        Recalling \eqref{eqn_j21_wx_upperbound}, we see that all summands with $ s < s' $ contribute lower order terms to \eqref{eqn_wxibar_gamma}, so in fact
        \begin{equation} \label{eqn_wxibar_gamma_sprime}
            d_x \overline{w}_x^{m, s', s'}(\mathcal{H}_1) = d^{s'} \sum_{e \in N_x} \sum_{\mathbf{b} \in \mathcal{Q}_{m, s'}} \prod_{j \in [\ell]} \frac{1}{b_j!} \gamma_{e, j}^{b_j} \pm 2 d^{s'-1/2} d_x.
        \end{equation}
        Plugging \eqref{eqn_wxibar_gamma_sprime} into \eqref{eqn_j21_am_wsprimes} gives
        \begin{equation} \label{eqn_j21_am_gammasplit}
            a_m = (1 \pm d^{-\varepsilon^3 / 2}) \sum_{s' \in [m \ell]} \sum_{e \in N_x} \sum_{\mathbf{b} \in \mathcal{Q}_{m, s'}} \prod_{j \in [\ell]} \frac{\gamma_{e, j}^{b_j}}{b_j!} \pm d^{-\varepsilon^3 / 4} d_x.
        \end{equation}
        Now observe by the multinomial theorem that
        \begin{equation} \label{eqn_j21_gamma_multinom}
            \gamma_e^m = \left( \sum_{j \in [\ell]} \gamma_{e, j} \right)^{m} = m! \sum_{s' \in [m \ell]} \sum_{\mathbf{b} \in \mathcal{Q}_{m, s'}} \prod_{j \in [\ell]} \frac{\gamma_{e, j}^{b_j}}{b_j!}.
        \end{equation}
        Hence, plugging \eqref{eqn_j21_gamma_multinom} into \eqref{eqn_j21_am_gammasplit}, we obtain the estimate
        \begin{equation} \label{eqn_j21_am_final}
            a_m = (1 \pm d^{-\varepsilon^3 / 2}) \sum_{e \in N_x} \frac{\gamma_e^m}{m!} \pm d^{-\varepsilon^3 / 4} d_x.
        \end{equation}}

        For $ x \in \mathbb{R} $, define $ S(x) = \sum_{m = 0}^{i^*} (-x)^m / m! $.
		Now \new{plugging} \eqref{eqn_j21_am_final} into \eqref{eqn_j21_ex_lb} gives 
		\begin{equation} \label{eqn_ex_s_est}
			|N_x^s| \ge \sum_{e \in N_x} 1 - \gamma_e + \frac{1}{2!} \gamma_e^2 - \cdots - \frac{1}{i^*!} \gamma_e^{i^*} \pm i^* d^{-\varepsilon^3 / 4} d_x
            \ge \sum_{e \in N_x} S(\gamma_e) - d^{-\varepsilon^3 / 5} d_x.
		\end{equation}
		To bound $ \sum_{e \in N_x} S(\gamma_e) $ from below, let $ \delta \coloneqq \exp(-\ell^2) / 3 > 0 $, then recalling that $ 1/i^* \ll 1/\ell $, we see that for every $ 0 \le x \le \ell^2 $, we have
		$ S(x) = \exp(-x) \pm \delta. $
		The choice of such an $ i^* $, depending only upon $ \ell $, is possible by the uniform convergence of the exponential series on the interval $ [0, \ell^2] $.
		Then, since $ 0 \le \gamma_e \le \ell^2 $ for every $ e \in N_x $, we obtain
		$$
			\sum_{e \in N_x} S(\gamma_e) \ge \sum_{e \in N_x} \exp(-\gamma_e) - d_x \delta \ge d_x \exp \left( -\frac{1}{d_x} \sum_{e \in N_x} \gamma_e \right) - d_x \delta \ge d_x (\exp(-\ell^2) - \delta)
		$$
		by convexity of the exponential function and the bound on $ \gamma_e $.
		Substituting this back into (\ref{eqn_ex_s_est}), we see that
		$ |N_x^s| \ge d_x (\exp(-\ell^2) - 2 \delta) \ge d_x \exp(-\ell^2) / 3. $
		Hence we may take $ \lambda = \lambda_{\ell} \coloneqq \exp(-\ell^2) / 3 > 0 $.
		
		\subsubsection{Choosing $ \mathcal{M}_2 $}
		
		We may now proceed to choose the edges of $ \mathcal{M}_2 $ randomly from the sets we have defined.
        Recall that $ P' = \{ x_1, \ldots, x_M \} $ is the subset of $ P $ not yet covered by $ \mathcal{M}_1 $, and define $ \mathcal{H}_2' \coloneqq \mathcal{H}_2[P' \cup R] $.
		For each $ i \in [M] $, choose an edge $ e_i $ uniformly at random from the set $ N^s_{x_i} $, so each possible edge $ e \in N^s_{x_i} $ is taken with probability
		\begin{equation} \label{eqn_edge_prob}
			\mathbb{P}[e_i = e] \le (\lambda d_{\mathcal{H}_2}(x_i))^{-1}.
		\end{equation}
		Recall also that (H$\ref{cond_h_h2d'}'$) and (H$\ref{cond_h_h2cd'}'$) together imply that $ d_{\mathcal{H}_2}(x_i) \ge d^{\varepsilon} $, so $ |N^s_{x_i}| \ge \lambda d^{\varepsilon} > 0 $.
		Set $ \mathcal{M}_2 \coloneqq \{ e_1, \ldots, e_m \} $ and take
		$ \mathcal{M} \coloneqq \mathcal{M}_1 \cup \mathcal{M}_2 $
		to be the combination of our matching in $ \mathcal{H}_1 $ with these new edges chosen from $ \mathcal{H}_2 $.
		We now use Lemma~\ref{lemma_local_mid} to ensure that $ \mathcal{M} $ is $ \mathcal{D} $-free; recall that by excluding the conflicts added in Section~\ref{section_proof_matching}, this will also ensure that $ \mathcal{M}_2 $ is a matching.
		
		For each $ j_2 \in [2, \ell] $, define bad events
		$ B_D \coloneqq \{ D \subseteq \mathcal{M}_2 \} $
		for each $ D \in {\mathcal{H}_2' \choose j_2} $ which could appear as the $ \mathcal{H}_2 $-part of a conflict in $ \mathcal{D}^{(j_1, j_2)} $ for some $ j_1 \in [0, \ell] $, that is $ D \in E_2(\mathcal{D}_{[C, \emptyset]}^{(j_1, j_2)}) $ for some $ C \in {\mathcal{M}_1 \choose j_1} $.\footnote{\new{Recall here our notation from Sections \ref{section:simply_bounded}, \ref{section_general_conditions}, and \ref{section_proof_conflicts}.}}
		Let $ \mathfrak{A} $ be the set of all such events\new{, for any $ (j_1, j_2) $}, which we aim to avoid.
        Note that, for each event $ B_D \in \mathfrak{A} $, we have $ \mathbb{P}[B_D] \le \lambda^{-1} d^{-\varepsilon} <  1/2 $, so in order to apply Lemma~\ref{lemma_local_mid}, it suffices to show that we also have
		$ \sum_{A \in \mathfrak{B}(D)} \mathbb{P}[A] \le 1 / 4, $
		where $ \mathfrak{B}(D) \coloneqq \{ B_{D'} \in \mathfrak{A} : D' \in E_2(\mathcal{D}_x) \text{ for some } x \in V_P(D) \} $, \new{recalling the definition of $ V_P(\cdot) $ from Section~\ref{section_general_conditions}}; since the event $ B_D $ depends only upon the edge choices for the vertices $ x \in V_P(D) $, it is mutually independent from the set of all events $ B_{D'} $ for which $ V_P(D) \cap V_P(D') = \emptyset $.
		
		Now fix some $ j_1 \in [0, \ell] $, $ j_2 \in [2, \ell] $, and $ x \in P' $, and again write $ \mathcal{G} \coloneqq \mathcal{D}^{(j_1, j_2)} $ for ease of notation.
		Further write
		$ \mathcal{G}^{\mathcal{M}_1} \coloneqq \{ C \cup D \in \mathcal{G} : C \subseteq \mathcal{M}_1 \text{ and } D \subseteq \mathcal{H}_2' \}. $
		Because $ |V_P(D)| = j_2 \le \ell $, and there are at most $ \ell^2 $ choices for $ j_1 $ and $ j_2 $, it is (more than) sufficient to show that
		\begin{equation} \label{eqn_probsum_nts}
			\sum_{D \in E_2(\mathcal{G}^{\mathcal{M}_1}_x)} \mathbb{P}[D \subseteq \mathcal{M}_2] \le d^{-\varepsilon^3 / 3}.
		\end{equation}
		 
		Given $ C \in {\mathcal{H}_1 \choose j_1} $, say that an $ \mathcal{H}_2 $-part \textit{$ D \in E_2(\mathcal{G}_{[C], x}) $ is blocked} if $ V_P(D) \not \subseteq P' $, that is some vertex $ y \in V_P(D) $ is already covered by $ \mathcal{M}_1 $.
        Note that such conflicts can be ignored as they will never be present in $ \mathcal{M} $, because no edge containing $ y $ is chosen in $ \mathcal{M}_2 $; the remainder of the proof is therefore concerned with bounding \textit{unblocked} conflicts.
        We define
		$ \mathcal{B}(C, x) \coloneqq \{ D \in E_2(\mathcal{G}_{[C], x}) : D \text{ is blocked} \} $
		and
		$ \mathcal{U}(C, x) \coloneqq \{ D \in E_2(\mathcal{G}_{[C], x}) : D \text{ is not blocked} \}. $
		Then by definition
		\begin{equation} \label{eqn_e2bard_char}
			E_2(\mathcal{G}^{\mathcal{M}_1}_x) = \bigcup_{C \in {\mathcal{M}_1 \choose j_1}} \mathcal{U}(C, x).
		\end{equation}
		Furthermore, observe that by (\ref{eqn_edge_prob}), and the independence of edge choices for distinct vertices of $ P' $, we have
		\begin{equation} \label{eqn_conflict_prob}
			\mathbb{P}[D \subseteq \mathcal{M}_2] \le \prod_{y \in V_P(D)} (\lambda d_{\mathcal{H}_2}(y))^{-1} = \lambda^{-j_2} A(D)
		\end{equation}
		for any (unblocked) $ D \in {\mathcal{H}_2' \choose j_2} $.
		Hence to show (\ref{eqn_probsum_nts}), using (\ref{eqn_e2bard_char}) and (\ref{eqn_conflict_prob}), it is enough to prove that
		\begin{equation} \label{eqn_probsum_nts2}
			\sum_{C \in {\mathcal{M}_1 \choose j_1}} A(\mathcal{U}(C, x)) \le d^{-\varepsilon^3 / 3}.
		\end{equation}
		We now prove (\ref{eqn_probsum_nts2}) using the test functions we have defined, as discussed previously.

		\subsubsection{Bounding unblocked conflicts}
		\label{section_bound_unblocked_conflicts}
		
		We consider first the case that the function $ \alpha_x^{-1} w_x $ is $ (d, \varepsilon / \new{10}, \mathcal{C}) $-\new{semi-}trackable, so Theorem~\ref{theorem_cfhm_mod} tells us that
		$ \alpha_x^{-1} w_x(\mathcal{M}_1) \le d^{\varepsilon / \new{4}}. $
		By the definition of $ w_x $ and the bound $ \alpha_x \le d^{-\varepsilon \new{/3}} $, this means that
		$$ \sum_{C \in {\mathcal{M}_1 \choose j_1}} A(\mathcal{G}_{[C], x}) = w_x(\mathcal{M}_1) \le \alpha_x d^{\varepsilon / \new{4}} \le d^{-\varepsilon / \new{12}}, $$		
		which in particular is sufficient for (\ref{eqn_probsum_nts2}).
		
		Assume instead that both of the functions $ \alpha_x^{-1} w_x $ and $ \alpha_x^{-1} w_x' $ are $ (d, \varepsilon / \new{10}, \mathcal{C}) $-trackable.
		In order to obtain (\ref{eqn_probsum_nts2}), we rewrite
		\begin{equation} \label{eqn_unblocked_difference}
			\sum_{C \in {\mathcal{M}_1 \choose j_1}} A(\mathcal{U}(C, x)) = \sum_{C \in {\mathcal{M}_1 \choose j_1}} A(\mathcal{G}_{[C], x}) - \sum_{C \in {\mathcal{M}_1 \choose j_1}} A(\mathcal{B}(C, x)),
		\end{equation}
		and use $ w_x $ and $ w_x' $ to estimate the two sums respectively.
		
		Firstly, by definition and the conclusion of Theorem~\ref{theorem_cfhm_mod}, we have that
		\begin{equation} \label{eqn_totalconflictsm1_upper}
			\sum_{C \in {\mathcal{M}_1 \choose j_1}} A(\mathcal{G}_{[C], x}) = w_x(\mathcal{M}_1) = (1 \pm d^{-\varepsilon^3}) d^{-j_1} w_x(\mathcal{H}_1) \le (1 + d^{-\varepsilon^3}) d^{-j_1} A(\mathcal{G}_x).
		\end{equation}

        To obtain a lower bound for the second term in (\ref{eqn_unblocked_difference}), start by observing that, given $ D \in E_2(\mathcal{G}_x) $, we have $ |V_P(D) \setminus P'| \le (j_2 - 1) \mathds{1}(V_P(D) \setminus P' \ne \emptyset) = (j_2 - 1) \mathds{1}(D \text{ blocked}) $.
        Therefore,
        \begin{align*}
            \sum_{C \in {\mathcal{M}_1 \choose j_1}} A(\mathcal{B}(C, x)) &= \sum_{C \in {\mathcal{M}_1 \choose j_1}} \sum_{D \in E_2(\mathcal{G}_{[C], x})} A(D) \mathds{1}(D \text{ blocked})\\
                &\ge \frac{1}{j_2 - 1} \sum_{C \in {\mathcal{M}_1 \choose j_1}} \sum_{D \in E_2(\mathcal{G}_{[C], x})} A(D) |V_P(D) \setminus P'|\\
                &= \frac{1}{j_2 - 1} \sum_{C \in {\mathcal{M}_1 \choose j_1}} \sum_{D \in E_2(\mathcal{G}_{[C], x})} A(D) \sum_{y \in V_P(D) \setminus \{ x \}} \sum_{e \in \mathcal{M}_1} \mathds{1}(y \in e) \\
                &= \frac{1}{j_2 - 1} w_x'(\mathcal{M}_1). \tag{\theequation} \label{eqn_blocked_wx'}
        \end{align*}
        To see the penultimate equality, note that $ y \in P \setminus P' $ if and only if there is exactly one edge $ e \in \mathcal{M}_1 $ containing $ y $.
        For the final equality, recall the definition of $ w_x' $ in Section~\ref{section_deftestfn}, and note further that if $ C \cup e \subseteq \mathcal{M}_1 $, then $ C \cup e $ cannot contain any conflict-sharing pair or conflict from $ \mathcal{C} $.

		Now by Theorem~\ref{theorem_cfhm_mod} and the estimate (\ref{eqn_wx'_bounds}), we see that
		\begin{equation} \label{eqn_wx'm1_estimate}
			w_x'(\mathcal{M}_1) = (1 \pm d^{-\varepsilon^3}) d^{-j_1 - 1} w_x'(\mathcal{H}_1) \ge (j_2 - 1) (1 - d^{-\varepsilon^3 / 2}) d^{-j_1} w_x(\mathcal{H}_1).
		\end{equation}
		Hence combining (\ref{eqn_blocked_wx'}) and (\ref{eqn_wx'm1_estimate}) we obtain the bound 
		\begin{equation} \label{eqn_blockedconflictsm1_lower}
			\sum_{C \in {\mathcal{M}_1 \choose j_1}} A(\mathcal{B}(C, x)) \ge (1 - d^{-\varepsilon^3 / 2}) d^{-j_1} A(\mathcal{G}_x).
		\end{equation}
		
		We finish by \new{plugging} (\ref{eqn_blockedconflictsm1_lower}) and (\ref{eqn_totalconflictsm1_upper}) back into (\ref{eqn_unblocked_difference}) to obtain
		$$ \sum_{C \in {\mathcal{M}_1 \choose j_1}} A(\mathcal{U}(C, x)) \le 2 d^{-\varepsilon^3 / 2} d^{-j_1} A(\mathcal{G}_x) \le d^{-\varepsilon^3 / 3}, $$
		by (E\ref{cond_mixed_general_degree}), as required for (\ref{eqn_probsum_nts2}).
		Note that all of the arguments made here still work for $ j_1 = 0 $, when $ w_x $ is defined simply on the empty set.

\section{Acknowledgments.}
We are grateful to Michelle Delcourt and Luke Postle for helpful comments and for informing us about some references.
We would also like to thank the referees for their extensive and helpful comments.

\printglossaries

\markboth{}{}

\bibliographystyle{amsplain}
\bibliography{sources}

\appendix

\section{Proofs of Applications} \label{section:appendix}

    In what follows we provide proofs for Theorems~\ref{theorem_cycles}, \ref{theorem_covering}, and~\ref{theorem_covering_application}, as well as further simplification for the determination of the Erd\H{o}s-Gy\'arf\'as function.

    \subsection{Proof of Theorem~\ref{theorem_cycles}}
		Note that in this subsection, $ \ell $ and $ k $ refer to the length and uniformity, respectively, of the tight cycle $ C_{\ell}^k $; we do not make any further reference to the parameters in Theorem~\ref{theorem_matching} of the same names, assuming throughout that $ \ell $ from the theorem has been chosen as a sufficiently large constant (depending on the length of the cycle, but not on~$ n $).
        Let $ \mathcal{T}_1 $ and $ \mathcal{T}_2 $ be disjoint sets of colours of sizes $ t_1 = n / (\ell - k) $ and $ t_2 = n^{1 - \delta} $ respectively, for $ \delta > 0 $ sufficiently small (as specified later).
        We refer to the vertices and edges of $ \mathcal{H} $ as \textit{auxiliary} to distinguish them from the \textit{vertices} and \textit{edges} of the underlying $ K_n^k $.

		\subsubsection{Construction of $ \mathcal{H}_1 $ and $ \mathcal{H}_2 $}
        Let $ (U_{\alpha})_{\alpha \in \mathcal{T}_1 \cup \mathcal{T}_2} $ be a set of $ t_1 + t_2 $ pairwise disjoint copies of $ E(K_n^{k - 1}) $, then set $ P \coloneqq E(K_n^k) $,
        $ Q \coloneqq \bigcup_{\alpha \in \mathcal{T}_1} U_{\alpha} $,
        and $ R \coloneqq \bigcup_{\alpha \in \mathcal{T}_2} U_{\alpha} $.
        We may define a hypergraph $ \mathcal{H}_1 $ by constructing an auxiliary edge corresponding to each pair $ (X, \alpha) $ where $ X $ is a copy of $ K_{\ell - 1}^k $ in $ K_{n}^k $ and $ \alpha \in \mathcal{T}_1 $; formally, we take the union of $ E(X) \subseteq P $ with the copy of $ {V(X) \choose k - 1} $ in $ U_{\alpha} $, so $ p = {\ell - 1 \choose k} $ and $ q = {\ell - 1 \choose k - 1} $.
        Such an auxiliary edge $ (X, \alpha) $ corresponds to a colouring of $ E(X) $ by the colour $ \alpha $, so a matching $ \mathcal{M}_1 \subseteq \mathcal{H}_1 $ yields a well-defined partial colouring $ c_1 $ (that is, a colouring of a subhypergraph) of $ E(K_n^k) $, in which every colour class is a set of copies of $ K_{\ell - 1}^k $ with pairwise intersections of size at most $ k - 2 $; in particular, this means that consecutive edges of a tight cycle cannot belong to distinct blocks within the same colour class.
        Similarly construct auxiliary edges of $ \mathcal{H}_2 $ corresponding to each pair $ (e, \alpha) $ where $ e \in P $ and $ \alpha \in \mathcal{T}_2 $; formally, such an auxiliary edge is represented by the union of $ \{ e \} $ with the copy of $ {e \choose k - 1} $ in $ U_{\alpha} $, so $ r = k $.
        
		To check the degree conditions for $ \mathcal{H}_1 $, fix some auxiliary vertex $ x \in P \cup Q $ and count the number of auxiliary edges $ (X, \alpha) $ to which it belongs.
        If $ x \in P $ is a $ k $-set, then there are $ {n - k \choose \ell - 1 - k} $ possible choices for the remaining vertices of $ X $ and $ t_1 = n / (\ell - k) $ choices for the colour $ \alpha $.
        If instead $ x \in U_{\alpha} $ is a $ (k - 1) $-set, then there are $ {n - k + 1 \choose \ell - k} $ possible choices for the remaining vertices of $ X $, and $ \alpha $ is already determined.
        Hence in each case the condition (H\ref{cond_h_h1degree}) is satisfied with
		$ d = \frac{n^{\ell - k}}{(\ell - k)!} $.
		Likewise given two auxiliary vertices $ x, y \in P \cup Q $, they determine at least $ k + 1 $ vertices of $ X $ (if both $ x, y \in P $) or at least $ k $ vertices of $ X $ and the colour $ \alpha $, so in each case the codegree $ d(x, y) $ is at most $ n^{\ell - k - 1} $, which is bounded from above by $ d^{1 - \varepsilon} $ for $ \varepsilon $ sufficiently small in terms of $ \ell $ and $ k $, and (H\ref{cond_h_h1codegree}) holds.

        Similarly, $ d_{\mathcal{H}_2}(x) = t_2 = n^{1 - \delta} $ for any $ x \in P $, and $ d_{\mathcal{H}_2}(y) = d_{K_n^k}(y) = n - k + 1 $ for any $ y \in R $, so (H\ref{cond_h_h2degree}) is satisfied provided that $ n^\delta \le d^{\varepsilon^4} $; since $ d $ is bounded from below by a constant power of $ n $, it suffices to choose $ \delta $ sufficiently small in terms of $ \varepsilon $.
        Furthermore we always have $ d(x, y) \le 1 $ so (H\ref{cond_h_h2codegree}) also holds, and clearly $ \exp(d^{\varepsilon^3}) \ge |P \cup Q| $ for $ n $ sufficiently large.
        
        A matching $ \mathcal{M}_2 \subseteq \mathcal{H}_2 $ gives a well-defined partial colouring $ c_2 $ of $ E(K_n^k) $ with colours from~$ \mathcal{T}_2 $, such that distinct edges of any given colour have pairwise intersections of size at most $ k - 2 $.
        If~$ \mathcal{M}_2 $ completes $ \mathcal{M}_1 $ to a $ P $-perfect matching $ \mathcal{M} $, then the union $ c $ of the two colourings gives a well-defined complete colouring of $ E(K^k_n) $.

        \subsubsection{Conflicts in general}
        We now use conflict hypergraphs to avoid any copy $ Z $ of $ C_{\ell}^k $ being coloured with at most $ k $ distinct colours.
        Given a problematic colouring of some such $ Z $, suppose that $ 0 \le t \le k - 1 $ colours appear on exactly one edge of $ Z $, then we may remove all such edges to obtain a subgraph $ Z' \subseteq Z $ with $ \ell - t $ edges, which is coloured by some colouring $ c' $ using at most $ k - t $ distinct colours.
        Observe that it suffices to forbid all such subcolourings $ (Z', c') $, and indeed we must forbid them, since any completion to a colouring of $ Z $ would be problematic, even if every remaining edge received a distinct new colour.
        Suppose therefore that the $ \ell - t $ edges in $ E(Z') \subseteq E(Z) $ receive at most $ k - t $ colours in $ c $ (and assume now that each such colour appears on at least two edges of $ Z' $).
        Recall that $ c $ arises from a ($ P $-perfect) matching of $ \mathcal{H} $, and let $ E $ be the smallest submatching which gives rise to the colouring $ c' $ of $ Z' $.
        Write $ E = C \cup D $, where $ C $ and $ D $ are sets of $ j_1 \in [0, \ell] $ and $ j_2 \in [0, \ell] $ auxiliary edges from $ \mathcal{H}_1 $ and $ \mathcal{H}_2 $ respectively.

        Fix some cyclic ordering of the vertices and edges of $ Z $.
        We may assume further that the edges of $ Z' $ coloured by any given auxiliary edge $ (X, \alpha) \in C $ are consecutive (in this ordering).
        Indeed, suppose that, for some $ 1 \le i_1 < i_2 \le i_3 < \ell $, we have $ E(Z) = \{ e_1, \ldots, e_{i_1}, \ldots, e_{i_2}, \ldots, e_{i_3}, \ldots, e_{\ell} \} $ (in cyclic order) and $ (X, \alpha) $ colours $ e_1, \ldots, e_{i_1} $ and $ e_{i_2}, \ldots, e_{i_3} $ but none of $ e_{i_1 + 1} $ or $ e_{i_3 + 1}, \ldots, e_{\ell} $ (note that there may or may not be other edges from $ e_{i_1 + 2}, \ldots, e_{i_2 - 1} $ coloured by $ (X, \alpha) $).
        Then at least one vertex $ v \in e_{\ell} $ is not contained in $ V(X) $ so, since $ X $ is a clique, none of the $ k $ edges of the cycle $ Z $ containing $ v $ are coloured by $ X $, which means that in fact $ \ell - i_3 \ge k $, and in particular $ i_3 \le \ell - k $.
        Thus we may find a (tight) path $ e_1 f_2 \cdots f_{i_3 - 1} e_{i_3} $, with $ i_3 $ edges, contained entirely in $ X $ (and so disjoint from the edges $ e_{i_3 + 1}, \ldots, e_{\ell} $) such that $ e_1 f_2 \cdots f_{i_3 - 1}  e_{i_3} \cdots e_{\ell} $ is a copy of $ C_{\ell}^k $ coloured with at most $ k $ distinct colours, with strictly fewer auxiliary edges from $ \mathcal{H}_1 $ colouring non-consecutive edges of the cycle.
        We see therefore that it suffices to avoid only submatchings $ E $ in which the edges of $ Z' $ coloured by any given auxiliary edge from $ \mathcal{H}_1 $ are consecutive in the cyclic ordering of $ Z $.
        We will take all possible such $ E $ to be our conflicts.

        In order to prove boundedness conditions for the conflict hypergraphs which we will define, we must bound the total number of such conflicts, given $ j_1 $ and $ j_2 $.
        To this end, we start by making some general observations about the number of vertex and colour choices involved in such conflicts.
        Write $ C = \{ (X_i, \alpha): i \in [j_1] \} $, where the $ X_i $ are enumerated according to our cyclic ordering of $ E(Z) $ (this is well-defined since each $ X_i $ colours only consecutive edges of the cycle).
        Observe that each auxiliary edge $ (X_i, \alpha) \in C $ colours the edges spanned by a set $ V(X_i) \cap V(Z) $ of at least $ k $ vertices on the cycle $ Z $, of which at most the first $ k - 1 $ (in our cyclic ordering of $ V(Z) $) are also contained in $ V(X_{i - 1}) $.
        Letting $ S_i \subseteq V(X_i) \cap V(Z) $ be these first $ k - 1 $ vertices, this means that the sets $ V(X_i) \setminus S_i $ are disjoint.
        Each such $ V(X_i) \setminus S_i $ has size $ \ell - k $, and counts exactly one vertex for each edge on the cycle $ Z $ which is coloured by $ X_i $, that is, $ |(V(X_i) \setminus S_i) \cap V(Z)| = |E(X_i) \cap E(Z)| $.
        Hence the size of the (disjoint) union $ |\bigcup_{i \in [j_1]} V(X_i) \setminus S_i| = j_1 (\ell - k) $ is exactly equal to the number of vertices contained in $ \bigcup_{i \in [j_1]} V(X_i) \setminus V(Z) $ plus the number of edges of $ Z' $ which are coloured by the $ \mathcal{H}_1 $-part $ C $, which is $ \ell - t - j_2 $.
        The number of vertices involved in a conflict is therefore $ j_1 (\ell - k) $ plus the remaining $ \ell - (\ell - t - j_2) = j_2 + t $ uncounted vertices in $ V(Z) $.
        We additionally have $ k - t $ colours, so in total we obtain the following.
        \begin{observation} \label{obs_cycles_conflict}
            Each conflict $ E $ consists of $ j_1 (\ell - k) + j_2 + k $ vertex/colour choices.
        \end{observation}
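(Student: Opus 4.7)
The plan is essentially to do the bookkeeping that the preceding paragraph has already set up. The key identities have all been established: the sets $V(X_i) \setminus S_i$ for $i \in [j_1]$ are pairwise disjoint (by choice of $S_i$ as the overlap with $V(X_{i-1})$) and each has size $(\ell-1)-(k-1) = \ell-k$, so their union has size exactly $j_1(\ell - k)$. I would then decompose this union into its part outside $V(Z)$ and its part inside $V(Z)$. The inside part contributes $\sum_i |(V(X_i) \setminus S_i) \cap V(Z)| = \sum_i |E(X_i) \cap E(Z')| = \ell - t - j_2$, since together the $X_i$ colour exactly those edges of $Z'$ not coloured by $D$, and $|E(Z')| = \ell - t$. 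Hence the number of "outside" vertex choices is $j_1(\ell - k) - (\ell - t - j_2)$.

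Next I would assemble the total. The conflict is specified by choosing (i) the vertices of $Z$, of which there are $\ell$; (ii) the additional vertices needed to complete each $X_i$, of which there are $j_1(\ell-k) - (\ell - t - j_2)$ by the above; and (iii) the $k - t$ colours used (recall that at most $k - t$ distinct colours appear on $Z'$, and by the reduction above we may assume each appears at least twice, i.e., precisely $k - t$ colours appear). Summing gives
\[
\ell + \bigl(j_1(\ell-k) - \ell + t + j_2\bigr) + (k - t) = j_1(\ell - k) + j_2 + k,
\]
as claimed.

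The only real subtlety, and the part I would want to be careful about, is justifying the identity $|(V(X_i) \setminus S_i) \cap V(Z)| = |E(X_i) \cap E(Z)|$ for each $i$: this is what lets the intersection of the disjoint union with $V(Z)$ be identified with the number of cycle-edges coloured by $C$. The reason is precisely the reduction to the case that each $X_i$ colours a consecutive arc of $Z$: walking along that arc, each new edge contributes exactly one new vertex of $V(X_i) \cap V(Z)$ outside $S_i$ (the "leading" vertex of the edge in the cyclic order), while $S_i$ accounts for the $k-1$ vertices shared with $V(X_{i-1})$. Once this bijection is established, the counting is automatic; there are no further estimates to make, and no conditions from (H1)–(H4) or the conflict conditions come into play at this step.
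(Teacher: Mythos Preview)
Your proposal is correct and follows essentially the same approach as the paper. The only cosmetic difference is in how the final sum is grouped: you compute $\ell + \bigl(j_1(\ell-k) - (\ell - t - j_2)\bigr) + (k-t)$, while the paper computes $j_1(\ell-k) + (j_2 + t) + (k-t)$, but these are two ways of splitting the same count and both simplify to $j_1(\ell-k) + j_2 + k$.
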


        Now let $ j' \in [j_1] $, fix a set $ E' \subseteq C $ of $ j' $ auxiliary edges from $ \mathcal{H}_1 $, and suppose that $ E' $ colours $ b $ edges of $ Z' $, which cover $ a $ vertices of $ Z $, using $ m $ distinct colours.
        Counting in a similar way to before, we see that $ E' $ fixes $ j'(\ell - k) + a - b $ vertices and $ m $ colours of the conflict part $ C $.
        Hence subtracting this from Observation~\ref{obs_cycles_conflict}, the total number of vertex/colour choices for a conflict $ E $ containing $ E' $ is $ (j_1 - j') (\ell - k) + j_2 + k - a + b - m $.
        Considering each vertex in cyclic order, it is not hard to see that the $ (\ell - a) $ vertices of $ Z $ not covered by $ E' $ belong to at least $ k - 1 + (\ell - a) $ edges of the cycle $ Z $, of which at least $ k - 1 + (\ell - a) - t $ also belong to $ Z' $.
        Since the number of edges of $ Z' $ not covered by $ E' $ is $ \ell - t - b $, this means that $ k - 1 + (\ell - a) - t \le \ell - t - b$ which is in turn equivalent to  $k + b - a \le 1 $.
        Note further that we can have equality only if all uncovered vertices are consecutive in $ Z $, which means also that the auxiliary edges of $ E' $ are consecutive, so if $ j' \ge 2 $ then they cannot all be of the same colour.
        Hence $ j' \ge 2 $ implies that either $ k + b - a \le 0 $ or $ m \ge 2 $, and we obtain the following.
        \begin{observation} \label{obs_cycles_conflict2}
            The total number of remaining vertex/colour choices for a conflict $ E $ containing $ E' $ is at most $ (j_1 - j') (\ell - k) + j_2 - \mathds{1}(j' \ge 2) $.
        \end{observation}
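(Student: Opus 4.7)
The plan is to derive Observation~\ref{obs_cycles_conflict2} by combining the counting formula with the two bounds already assembled in the preceding paragraph. Specifically, the total number of remaining vertex/colour choices for a conflict $E$ containing $E'$ is $(j_1 - j')(\ell - k) + j_2 + k - a + b - m$, so it suffices to verify that
\[ k + b - a - m \le -\mathds{1}(j' \ge 2). \]
Since $E'$ uses at least one colour we have $m \ge 1$, which when combined with the already derived vertex-count bound $k + b - a \le 1$ yields $k + b - a - m \le 0$; this handles the case $j' \le 1$.

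For $j' \ge 2$ the remaining work is to upgrade this to $k + b - a - m \le -1$, and I would split on whether $k + b - a \le 0$ or $k + b - a = 1$. In the former case, $m \ge 1$ suffices immediately. In the latter, the equality is already known to force all $\ell - a$ cycle-vertices not covered by $E'$ to be consecutive in the cyclic order of $V(Z)$, which in turn means that the blocks of edges of $Z'$ coloured by the individual $X_i$ appear as consecutive arcs on the cycle; I would use the claim, phrased carefully, that equality in the vertex-counting inequality is equivalent to the uncovered vertices forming a single arc.

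The final step is then to observe that any two consecutive auxiliary edges $(X_i, \alpha_i), (X_{i+1}, \alpha_{i+1}) \in E'$ must use distinct colours. Indeed, the last cycle-edge coloured by $X_i$ and the first cycle-edge coloured by $X_{i+1}$ are consecutive edges of $Z$ sharing $k - 1$ cycle-vertices, all of which must therefore lie in $V(X_i) \cap V(X_{i+1})$. If $\alpha_i = \alpha_{i+1}$, then because $(X_i, \alpha_i)$ and $(X_{i+1}, \alpha_{i+1})$ are distinct auxiliary edges of $\mathcal{H}_1$ belonging to a common matching, their associated $(k-1)$-subsets of $U_{\alpha_i}$ must be vertex-disjoint in $Q$, which forces $|V(X_i) \cap V(X_{i+1})| \le k - 2$; contradiction. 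Hence the $X_i$ in $E'$ use at least two colours, so $m \ge 2$ and again $k + b - a - m \le -1$.

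The main delicate point is rigorously justifying that equality in $k + b - a \le 1$ forces the blocks coloured by distinct auxiliary edges of $E'$ to appear as consecutive arcs on the cycle, since this requires carefully tracking which edges of $Z$ lie in $Z'$ and which do not, while avoiding double counting around the "gap" of size $t$. However, the bulk of this reasoning is already sketched in the preceding paragraph, so the proof of the observation really amounts to formally recording how that arc structure combines with the same-colour obstruction coming from the matching property in $\mathcal{H}_1$.
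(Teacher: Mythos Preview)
Your proposal is correct and follows essentially the same approach as the paper: start from the count $(j_1 - j')(\ell - k) + j_2 + k - a + b - m$, use $k + b - a \le 1$ together with $m \ge 1$ for the general bound, and for $j' \ge 2$ use that equality in $k + b - a \le 1$ forces the uncovered vertices to be a single arc, hence the $j'$ auxiliary edges of $E'$ are consecutive, forcing $m \ge 2$. You in fact spell out more detail than the paper does on the last point, correctly deriving the contradiction from the matching property in $Q$: consecutive $X_i, X_{i+1}$ share $k-1$ cycle-vertices, whereas two auxiliary edges of the same colour in a matching of $\mathcal{H}_1$ must have $|V(X_i)\cap V(X_{i+1})|\le k-2$.
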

        We now use our observations to check the required conditions for the two types of conflict hypergraph.

        \subsubsection{The conflict hypergraph $ \mathcal{C} $}
        We define a conflict hypergraph $ \mathcal{C} $ for $ \mathcal{H}_1 $ by taking all conflicts $ E $ as described above for which $ j_2 = 0 $, and
        check that $ \mathcal{C} $ is $ (d, O(1), \varepsilon) $-bounded (for $ \varepsilon $ sufficiently small); the first condition (C\ref{cond_c1}) is clear since a cycle cannot be completely covered by two disjoint auxiliary edges.
        Both (C\ref{cond_c2}) and (C\ref{cond_c3}) follow from the cases $ j' = 1 $ and $ j' \ge 2 $ in Observation~\ref{obs_cycles_conflict2} respectively, since the conflict $ E $ is determined by these vertex/colour choices up to $ O(1) $ rearrangements, so $ \Delta_{j'}(\mathcal{C}^{(j_1)}) = O(d^{j_1 - j' - 2\varepsilon \mathds{1}(j' \ge 2)}) $ for all $ j' \in [j_1 - 1] $.

        \subsubsection{The conflict hypergraph $ \mathcal{D} $}
        We now define a conflict hypergraph $ \mathcal{D} $ for $ \mathcal{H} $ by taking all conflicts $ E $ with $ j_2 > 0 $; note that by the definition of $ Z' $, we have $ j_2 \ne 1 $ so (D\ref{cond_mixed_conflictsize}) holds.
        For (D\ref{cond_mixed_degree}), if we fix a single edge of the cycle $ f \in P $, then this fixes $ k $ vertices so by Observation~\ref{obs_cycles_conflict} we have $ O(n^{j_1 (\ell - k) + j_2}) \le d^{j_1 + \varepsilon^4} d_2^{j_2} $ choices for $ E $.
        Fixing a set $ E' $ of $ j' $ additional auxiliary edges from $ \mathcal{H}_1 $ which might be contained in conflicts with $ f $, the condition (D\ref{cond_mixed_codegree1}) follows immediately from Observation~\ref{obs_cycles_conflict2} in the case $ j' \ge 2 $.
        If instead $ j' = 1 $, note that $ E' $ cannot cover all vertices of $ f $, so fixing $ f $ fixes at least one further vertex, which is sufficient for (D\ref{cond_mixed_codegree1}).
        Finally for (D\ref{cond_mixed_codegree2}), observe that fixing a second edge $ g \in P $ fixes at least one further vertex of the cycle, which again suffices.

        \subsubsection{Conclusion}
		We may now conclude by applying Theorem~\ref{theorem_matching} to obtain a $ P $-perfect matching $ \mathcal{M} \subseteq \mathcal{H} $ containing no conflict from any of the conflict hypergraphs we have defined.
        As described, this yields a complete colouring $ c $ of $ E(K_n^k) $ with $ t_1 + t_2 $ colours in which every copy of $ C_{\ell}^k $ receives at least $ k + 1 $ distinct colours.
		\qed

    \subsection{The Erdős-Gyárfás function $ f(n,4,5) $}

        As mentioned in Section~\ref{section_applications_matching}, the following theorem was first proved by Bennett, Cushman, Dudek, and Prałat \cite{bennett2022erdhosgyarfas}, and the proof was already simplified massively by Joos and Mubayi \cite{joos2022ramsey}.
        We simplify it even further using Theorem~\ref{theorem_matching}.
        
        \begin{theorem}
			\label{theorem_kn_k4}
			$ r(K_n, K_4, 5) = \frac{5n}{6} + o(n) $.
		\end{theorem}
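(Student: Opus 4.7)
The plan is to apply Theorem~\ref{theorem_matching} via the two-stage framework used to prove Theorem~\ref{theorem_cycles}; the lower bound $r(K_n, K_4, 5) \ge 5n/6 + o(n)$ is already due to \cite{bennett2022erdhosgyarfas}, so what remains is the matching upper bound. Let $\mathcal{T}_1, \mathcal{T}_2$ be disjoint colour sets of sizes $t_1 = 5n/6$ and $t_2 = n^{1 - \delta}$ for a sufficiently small $\delta > 0$, and for each $\alpha \in \mathcal{T}_1 \cup \mathcal{T}_2$ let $U_\alpha$ be a disjoint copy of an appropriate index set associated with $\alpha$. Set $P \coloneq E(K_n)$, $Q \coloneq \bigcup_{\alpha \in \mathcal{T}_1} U_\alpha$, and $R \coloneq \bigcup_{\beta \in \mathcal{T}_2} U_\beta$.

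The hypergraph $\mathcal{H}_1$ has an auxiliary edge for each pair $(S, \alpha)$, where $S$ is a fixed-type small configuration of edges in $K_n$ and $\alpha \in \mathcal{T}_1$; this auxiliary edge consists of the edges of $S$ (inside $P$) together with the vertices of $S$ inside $U_\alpha$. A matching $\mathcal{M}_1 \subseteq \mathcal{H}_1$ then yields a partial colouring whose $\alpha$-colour class is a vertex-disjoint packing of copies of $S$. The hypergraph $\mathcal{H}_2$ has an auxiliary edge for each pair $(e, \beta)$ with $e \in E(K_n)$ and $\beta \in \mathcal{T}_2$, consisting of $e$ and the two endpoints of $e$ inside $U_\beta$, so that $\mathcal{M}_2 \subseteq \mathcal{H}_2$ colours each remaining edge of $K_n$ with a stage-$2$ colour. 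Define conflict hypergraphs $\mathcal{C}$ (consisting entirely of $\mathcal{H}_1$-edges) and $\mathcal{D}$ (involving at least one $\mathcal{H}_2$-edge) by listing all minimal subsets of auxiliary edges whose simultaneous selection forces some copy of $K_4$ in $K_n$ to receive at most four distinct colours. A $P$-perfect matching $\mathcal{M} \subseteq \mathcal{H}$ avoiding every conflict in $\mathcal{C} \cup \mathcal{D}$ translates directly into a complete colouring of $E(K_n)$ using $t_1 + t_2 = 5n/6 + o(n)$ colours in which every $K_4$ has at least five distinct colours.

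It remains to check the hypotheses of Theorem~\ref{theorem_matching}. The degree conditions (H\ref{cond_h_h1degree})--(H\ref{cond_h_h2codegree}) reduce to elementary counts of edges, triangles, and copies of $S$ in $K_n$, and can be forced to hold (where necessary) by the regularisation step of Section~\ref{section_proof_regularising}; the boundedness conditions (C\ref{cond_c1})--(C\ref{cond_c3}) and (D\ref{cond_mixed_conflictsize})--(D\ref{cond_mixed_codegree2}) amount to bounding, given a partial conflict, how many vertex and colour choices remain free for its completion, in exactly the spirit of Observations~\ref{obs_cycles_conflict} and~\ref{obs_cycles_conflict2}. The main obstacle is choosing the stage-$1$ structure $S$: the requirement that the auxiliary hypergraph be essentially regular with $t_1 = 5n/6$ forces $q/p = 5/3$, where $p = |E(S)|$ and $q$ counts the $Q$-vertices used per auxiliary edge, and $S$ must simultaneously be combinatorially compatible with the $K_4$-constraint --- for example, $S$ cannot simply be a triangle, since a stage-$1$ triangle sitting inside a $K_4$ already collapses that $K_4$ to at most four colours. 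Once a suitable $S$ is identified, everything else is routine bookkeeping, bypassing the long technical analyses carried out in \cite{bennett2022erdhosgyarfas, joos2022ramsey}.
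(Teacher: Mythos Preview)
Your outline correctly identifies the framework and the key numerical constraint $q/p = 5/3$, but it leaves the central step --- the actual choice of the stage-1 gadget --- unresolved. You write ``once a suitable $S$ is identified, everything else is routine bookkeeping,'' yet no $S$ is ever produced, and your single-colour template $(S,\alpha)$ with $\alpha \in \mathcal{T}_1$ does not obviously admit one: forcing $v(S)/e(S)=5/3$ while also preventing the stage-1 colouring from collapsing some $K_4$ to four colours (and keeping all conflict counts within the bounds (C\ref{cond_c1})--(C\ref{cond_c3}) and (D\ref{cond_mixed_conflictsize})--(D\ref{cond_mixed_codegree2})) is precisely the hard part, not routine bookkeeping. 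As written, this is a template rather than a proof.

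The paper's construction does not fit your template. Each auxiliary edge of $\mathcal{H}_1$ corresponds to a triangle $uvw$ together with \emph{two} colours $\alpha,\beta \in \mathcal{T}_1$, and colours $uv,uw$ with $\alpha$ and $vw$ with $\beta$; concretely, the hyperedge is $\{uv,vw,wu,u_\alpha,v_\alpha,w_\alpha,v_\beta,w_\beta\}$, so $p=3$ and $q=5$. Moreover, the sets $V_\alpha$ are obtained from disjoint copies of $V(K_n)$ by \emph{random} deletion (each vertex removed independently with probability $\rho/(1+\rho)$, $\rho=n^{-\delta}$), and the hyperedge is only present when $u_\alpha,v_\alpha,w_\alpha,v_\beta,w_\beta$ survive and $u_\beta$ does not. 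This asymmetry is what makes the degree condition (H\ref{cond_h_h1degree}) hold with $t_1 = (1+\rho)\tfrac{5n}{6}$, and verifying it requires a concentration argument rather than ``elementary counts''; the regularisation step in Section~\ref{section_proof_regularising} only boosts low $Q$-degrees and cannot manufacture the correct leading constant. The two-colour-per-gadget structure is also what reduces the bad-$K_4$ analysis to alternating $4$-cycles, giving the three clean conflict hypergraphs $\mathcal{C},\mathcal{D},\mathcal{E}$ described in the paper.
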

		
		\begin{proof}[Proof Sketch]
			We prove only the upper bound; see \cite{bennett2022erdhosgyarfas} for the lower bound.
			The proof is similar to the graph case of Theorem~\ref{theorem_cycles}, but complicated slightly by the use of a random construction of the hypergraph $ \mathcal{H} $, so we give a sketch of the key differences.
            Let $ t_2 \coloneqq n^{1 - \delta} $ for $ \delta > 0 $ sufficiently small, $ \rho \coloneqq n^{-\delta} $ and $ t_1 \coloneqq (1 + \rho)\frac{5n}{6} $. Let $ G \coloneqq K_n $ and construct the vertex set $ Q $ randomly as follows.
            Firstly take $ t_1 $ disjoint copies $ (V_{\alpha}')_{\alpha \in \mathcal{T}_1} $ of $ V(G) $, then delete each vertex independently at random with probability $ p \coloneqq \frac{\rho}{1 + \rho} $ to give sets $ (V_{\alpha})_{\alpha \in \mathcal{T}_1} $ of remaining vertices, and define
			$ Q \coloneqq \bigcup_{\alpha \in \mathcal{T}_1} V_{\alpha} $.
			Given a vertex $ v \in V(G) $ let $ v_{\alpha} $ denote the copy of $ v $ in $ V_{\alpha}' $, and define the hypergraph $ \mathcal{H}_1 $ by adding a hyperedge
			$$ \{ uv, vw, wu, u_{\alpha}, v_{\alpha}, w_{\alpha}, v_{\beta}, w_{\beta} \} $$
			for each triangle $ uvw $ in $ G $ and distinct $ \alpha, \beta \in \mathcal{T}_1 $ for which $ u_{\alpha}, v_{\alpha}, w_{\alpha}, v_{\beta}, w_{\beta} \in Q $ and $ u_{\beta} \not \in Q $.
            Choosing such a hyperedge corresponds to colouring the edges $ uv, uw $ by colour $ \alpha $ and $ vw $ by colour $ \beta $; when we do not need to specify the arrangement of the colours, we denote the hyperedge by $ (K, \alpha, \beta) $ for $ K = uvw $.
            A matching in $ \mathcal{H}_1 $ therefore yields a partial colouring in which every colour class consists of vertex-disjoint edges and two-edge paths such that, if the hyperedge above is chosen, then $ vw $ is a component in the colour class $ \beta $ and $ u $ is an isolated vertex in this class.
            By taking expected values and using a suitable concentration inequality (see \cite{joos2022ramsey} for details), it can be shown that $ \mathcal{H}_1 $ satisfies the required degree conditions with high probability, for some $ d $ which is $ \Theta(n^{3 - \delta}) $.
            We assume henceforth that we have fixed a specific choice of $ \mathcal{H}_1 $ which meets these conditions, as well as one further condition mentioned later.

            We define $ R $ to be a set of $ t_2 $ vertex disjoint copies $ (U_{\beta})_{\beta \in \mathcal{T}_2} $ of $ V(G) $, and the edges of $ \mathcal{H}_2 $ to be the set of pairs $ (uv, \beta) \in E(G) \times \mathcal{T}_2 $, each represented by the union of the edge $ \{ uv \} \subseteq P $ with the vertices $ \{ u_{\beta}, v_{\beta} \} \subseteq U_{\beta} $.
            It can be checked that this also satisfies the required degree conditions.

			Now suppose that $ Z $ is a copy of $ K_4 $ receiving at most four distinct colours in the colouring resulting from a matching $ \mathcal{M} \subseteq \mathcal{H} $.
            Note that crucially, by the construction of $ \mathcal{H}_1 $ and the fact that matchings of $ \mathcal{H}_2 $ yield proper colourings, some 4-cycle $ Z' \subseteq Z $ must receive exactly two distinct colours.
            Furthermore these colours must alternate, so we need consider only three cases depending on whether the two colours come from $ \mathcal{T}_1 $ or $ \mathcal{T}_2 $.
			
			Firstly, define a conflict hypergraph $ \mathcal{C} $ for $ \mathcal{H}_1 $ to be the colourings of 4-cycles in $ G $ comprising two monochromatic matchings of size 2; it can be checked that this is $ (d, O(1), \varepsilon) $-bounded \cite{joos2022ramsey}.
            Define also a $ (d, O(1), \varepsilon) $-simply-bounded conflict hypergraph $ \mathcal{D} $ for $ \mathcal{H}_2 $, with conflicts of size 4, to account for both colours coming from $ \mathcal{T}_2 $.
			
			Finally, define a conflict hypergraph $ \mathcal{E} $ for $ \mathcal{H} $ by taking conflicts of the form
			$$ \{ (K_x, \alpha_x, \alpha_x'), (K_y, \alpha_y, \alpha_y'), (xy, \beta), (x'y', \beta) \}, $$
			where
			\begin{itemize}
				\item $ K_x $ and $ K_y $ are vertex-disjoint triangles in $ G $;
				\item $ x, x' \in K_x $ and $ y, y' \in K_y $ (and are all distinct);
				\item the edges $ xx' $ and $ yy' $ both receive the same colour $ \alpha \in \mathcal{T}_1 $; 
				\item $ \alpha_x, \alpha_x', \alpha_y, \alpha_y' \in \mathcal{T}_1 $ and $ \beta \in \mathcal{T}_2 $.
			\end{itemize}
			
			Observe that this accounts for all of the ways in which the 4-cycle $ xx'y'y $ may receive the two colours $ \alpha $ and $ \beta $.
            It can be checked that $ \mathcal{E} $ is $ (d, O(1), \varepsilon) $-simply-bounded.
			
			We conclude by applying Theorem~\ref{theorem_matching} to obtain a matching containing none of the conflicts defined above, so in particular yielding a complete colouring of $ E(K_n) $ with $ t_1 + t_2 $ colours in which every copy of $ K_4 $ receives at least five distinct colours.
		\end{proof}

	\subsection{Coverings}
		\label{section_covering_proof}
		
		Here we prove Theorem~\ref{theorem_covering} as an application of Theorem~\ref{theorem_matching}, and then Theorem~\ref{theorem_covering_application} as a further application of Theorem~\ref{theorem_covering}; of course Theorem~\ref{theorem_covering_application} could also be proved directly from Theorem~\ref{theorem_matching}.
		
		\subsubsection{Proof of Theorem~\ref{theorem_covering}}
		
		As for the colouring results above we start by constructing a suitable hypergraph, then we construct conflict hypergraphs and check the required conditions.
		
		\textbf{Construction of $ \mathcal{H}_1 $ and $ \mathcal{H}_2 $:}
		Assume that $ \ell' $ and $ \varepsilon' $ are sufficiently large and small respectively, in terms of $ \ell, k, \varepsilon $.
		Define a new hypergraph $ \mathcal{H}' \coloneqq \mathcal{H}_1 \cup \mathcal{H}_2 $ as follows.
		We set
		$$ P \coloneqq R \coloneqq V(\mathcal{H}), \quad Q \coloneqq \emptyset, \quad E(\mathcal{H}_1) \coloneqq E(\mathcal{H}). $$
		Then take
		$$ E(\mathcal{H}_2) \coloneqq \left\{ v \cup E' : v \in P, E' \in {R \choose k - 1}, v \cup E' \in E(\mathcal{H}) \right\} $$
		to be a duplicate set of edges of $ \mathcal{H} $, but now with only a single vertex coming from $ P $ and the rest of the vertices coming from $ R $.
		
		We have $ p = k, q = 0, r = k - 1 $. Conditions (H\ref{cond_h_h1degree}) and (H\ref{cond_h_h1codegree}) follow immediately from the conditions for $ \mathcal{H} $, taking $ d $ to be the same.
		If $ v \in P $, then $ (1 - d^{-\varepsilon}) d \le d_{\mathcal{H}_2}(v) \le d $ and if $ v \in R $, then $ d_{\mathcal{H}_2}(v) \le (k - 1) d $ so (H\ref{cond_h_h2degree}) holds with $ \delta_{\mathcal{H}_2}(P) \ge (1 - d^{-\varepsilon}) d $.
		Given $ x \in P $ and $ v \in R $, we have $ d(x, v) \le d^{1 - \varepsilon} $ by the codegree condition for $ \mathcal{H} $, so (H\ref{cond_h_h2codegree}) is also satisfied, taking $ \varepsilon' \le \varepsilon / 2 $.
		
		Since we may associate edges in each of $ \mathcal{H}_1 $ and $ \mathcal{H}_2 $ with edges of $ \mathcal{H} $, a $ P $-perfect matching $ \mathcal{M}' \subseteq \mathcal{H}' $ as obtained from Theorem~\ref{theorem_matching} corresponds to a set of edges $ \mathcal{M} \subseteq \mathcal{H} $ such that every vertex $ v \in V(\mathcal{H}) $ is contained in at least one edge of $ \mathcal{M} $.
		Furthermore, a vertex may only appear multiple times if it appears once from $ P $ and once from $ R $, so at most $ (k - 1) d^{-\varepsilon^4}n $ vertices appear more than once in $ \mathcal{M} $, and no vertex appears more than twice.
		
		\textbf{Conflicts in $ \mathcal{H}_1 $:} Take $ \mathcal{C}' \coloneqq \mathcal{C} $, with the corresponding edges in $ \mathcal{H}_1 $. Then this immediately implies that $ \mathcal{C}' $ is $ (d, \ell', \varepsilon') $-bounded for any $ \ell' \ge \ell $ and $ \varepsilon' \le \varepsilon $.
		
		\textbf{Conflicts in $ \mathcal{H}' $:}
		Take $ \mathcal{D} $ to be the collection of all sets of edges $ C' \subseteq \mathcal{H}' $ such that the corresponding set of edges $ C \subseteq \mathcal{H} $ is a conflict in $ \mathcal{C} $, and at least one edge of $ C' $ comes from $ \mathcal{H}_2 $.
		Then (E\ref{cond_mixed_general_conflictsize}) is clear from (C\ref{cond_c1}). Given an edge $ e \in \mathcal{H}_2 $, there are at most $ \ell d^{j - 1} $ conflicts $ C \in \mathcal{C} $ of size $ j $ containing $ e $ (viewed as an edge of $ \mathcal{H} $) by (C\ref{cond_c2}), and given $ C $, there are at most $ {j - 1 \choose j_1} k^{j_2 - 1} $ corresponding $ C' \in \mathcal{D}^{(j_1, j_2)} $ for each pair $ j_1, j_2 $ with $ j_1 + j_2 = j $.
		Now given a vertex $ x \in P $, there are at most $ d $ such edges $ e \in \mathcal{H}_2 $ containing $ x $, so $ |\mathcal{D}_x^{(j_1, j_2)}| \le \ell' d^j \le d^{j_1 + \varepsilon'^4} d_2^{j_2} $, provided $ \ell' $ is chosen to be sufficiently large (noting that it may differ from $ \ell $), so (E\ref{cond_mixed_general_degree}) holds.
		In the case that $ j_2 = 1 $, we also obtain $ d_{\mathcal{D}^{(j_1, 1)}}(e) \le \ell' d^{j_1} $, as required for (E\ref{cond_mixed_general_degree_j21}).
		
		Similarly, any fixed set of $ j' $ edges in $ \mathcal{H}_1 $ are contained in at most $ d^{j - j' - \varepsilon} $ conflicts $ C \in \mathcal{C} $ of size $ j $ by (C\ref{cond_c3}), which gives (E\ref{cond_mixed_general_codegree1}) for suitable $ \varepsilon' $.
		The condition (E\ref{cond_mixed_general_codegree_j21}) follows similarly by fixing one further edge in $ \mathcal{H}_2 $.
		Finally, if we fix two vertices $ x, y \in P $, then there are at most $ d^2 $ edges $ e, f \in \mathcal{H}_2 $ with $ x \in e $ and $ y \in f $, and then there are at most $ d^{j - 2 - \varepsilon} $ conflicts $ C \in \mathcal{C} $ of size $ j $ containing both $ e $ and $ f $ (again by (C\ref{cond_c3})), so (E\ref{cond_mixed_general_codegree2}) follows.
		Hence $ \mathcal{D} $ is $ (d, \ell', \varepsilon', \varepsilon'^4) $-mixed-bounded.
		
		We finish by applying Theorem~\ref{theorem_matching}, and it is clear that $ \mathcal{M}' $ being $ \mathcal{C}' \cup \mathcal{D} $-free ensures that $ \mathcal{M} $ is $ \mathcal{C} $-free.
		\qed
		
	\subsubsection{Proof of Theorem~\ref{theorem_covering_application}}
	\label{section_covering_application_proof}
	
		Our strategy will be to apply Theorem~\ref{theorem_covering} to a suitable hypergraph with suitably defined conflicts. As such, let $ \mathcal{H} $ be the $ {s \choose t} $-graph on vertex set $ V(\mathcal{H}) = E(G) $ with edges $ {S \choose t} $ for each $ S \in \mathcal{K} $, and regard each such edge as corresponding to the $ s $-set $ S $.
		By assumption, $ \mathcal{H} $ satisfies (H\ref{cond_h_h1degree}) with $ d = (1 + m^{-\varepsilon})c m^{s - t} $ and $ \varepsilon' < \varepsilon $.
		For (H\ref{cond_h_h1codegree}), fix two distinct $ t $-sets $ T_1, T_2 \in {[m] \choose t} $, so $ |T_1 \cup T_2| \ge t + 1 $, meaning that the total number of $ s $-sets $ S \in {[m] \choose s} $ containing both $ T_1 $ and $ T_2 $ is at most
		$ {m \choose s - t - 1} \le m^{s - t - 1} \le d^{1 - \varepsilon'} $
		for $ \varepsilon' $ sufficiently small.

		Call a collection of $ j $ sets $ S_1, \ldots, S_j \in \mathcal{K} $, each of size $ s $, a \textit{bad $ j $-configuration} if it is a matching in $ \mathcal{H} $ (that is the intersection $ S_i \cap S_{i'} $ of any pair of the sets has size at most $ t - 1 $), and
		$$ |S_1 \cup \cdots \cup S_j| \le (s - t)j + t \text{.} $$
		Call such a configuration \textit{minimal} if it does not contain any bad $ j' $-configuration for $ 2 \le j' < j $.
		We may then take $ \mathcal{C} $ to be the set of all minimal bad $ j $-configurations (contained in $ \mathcal{K} $) for $ 3 \le j \le \ell $, noting that if two $ s $-sets span $ 2s - t $ points then they cannot be a matching, so we need not consider the case $ j = 2 $.
		Now it is clear that a $ \mathcal{C} $-free covering of $ \mathcal{H} $ as in Theorem~\ref{theorem_covering} yields the required collection $ \mathcal{S} $. Hence it suffices to check that $ \mathcal{C} $ is indeed $ (d, \ell', \varepsilon') $-bounded for $ \ell' $ and $ \varepsilon' $ sufficiently large and small respectively, in terms of $ s, t, \ell, \varepsilon $.
		
		Indeed, (C\ref{cond_c1}) holds for $ \ell' \ge \ell $.
		By definition, a conflict $ C $ of size $ j $ spans at most $ (s - t)j + t $ points of $ [m] $, and fixing one $ s $-set $ S \in C $ fixes $ s $ of these.
		This leaves at most $ (s - t)(j - 1) $ points to choose, so at most
		$ O(m^{(s - t)(j - 1)}) \le \ell' d^{j - 1} $
		choices for the rest of $ C $, for $ \ell' $ sufficiently large in terms of $ s, t, \ell $, which is sufficient for (C\ref{cond_c2}).
		For (C\ref{cond_c3}), suppose now that we fix $ j' $ sets $ S_1, \ldots, S_{j'} \in C $, for some $ 2 \le j' \le j - 1 $.
		Then by minimality of $ C $, we may assume that
		$ |S_1 \cup \cdots \cup S_{j'}| > (s - t)j' + t, $
		so there are at most $ (s - t)(j - j') - 1 $ points left to choose, giving at most
		$ O(m^{(s - t)(j - j') - 1}) \le d^{j - j' - \varepsilon'} $
		choices for $ C $, as required.
		Hence we may apply Theorem~\ref{theorem_covering} to give the desired conclusion.
		\qed

\end{document}